\newcommand\1{{\mathbf 1}} 
\newcommand{\NN}{\mathbb{N}}
\newcommand{\ZZ}{\mathbb{Z}}
\newcommand{\TT}{\mathbb{T}} 
\newcommand{\QQ}{\mathbb{Q}}
\newcommand{\RR}{\mathbb{R}}
\newcommand{\CC}{\mathbb{C}}
\newcommand{\KK}{\mathbb{K}}
\newcommand{\LL}{\mathbb{L}}
\newcommand{\cD}{\mathcal{D}}
\newcommand{\cR}{\mathcal{R}}
\newcommand{\cP}{\mathcal{P}}
\newcommand{\cK}{\mathcal{K}}
\newcommand\tropvariety[1]{{\mathcal T}#1} 
\newcommand\smallSetOf[2]{\{#1\,|\,#2\}}
\newcommand\bigSetOf[2]{\biggl\{#1\,\biggm|\,#2\biggr\}}
\newcommand\SetOf[2]{\left\{#1\,\vphantom{#2}\right|\left.\vphantom{#1}\,#2\right\}}
\newcommand\puiseux[2]{{#1}\{\!\{{#2}\}\!\}}
\newcommand\stableintersection{\cap_{\rm st}}
\newcommand\lex[1]{\mathbf{#1}}
\newcommand{\mon}{{\Gamma}}
\newcommand\hahnseries[2]{{#1}[\![{#2}]\!]} 
\newcommand\hahnpoly[2]{{#1}[{#2}]} 
\newcommand\hahnfrac[2]{{#1}({#2})} 
\newcommand\hahnconvergent[2]{{#1}\{{#2}\}} 
\DeclareMathOperator\lt{lt} 
\DeclareMathOperator\lc{lc} 
\DeclareMathOperator\supp{supp} 
\DeclareMathOperator\val{val} 
\DeclareMathOperator\trop{trop} 
\DeclareMathOperator\sta{star} 
\DeclareMathOperator\inte{relint} 
\DeclareMathOperator\argmin{argmin} 
\newcommand\smallpmatrix[1]{\left(\begin{smallmatrix} #1 \end{smallmatrix}\right)}
\newcommand\smallcolvectwo[2]{\smallpmatrix{#1 \\ #2}} 
\DeclarePairedDelimiter\abs{\lvert}{\rvert}
\definecolor{appcolor}{rgb}{0,0,0}
\definecolor{commentcolor}{rgb}{.3,.3,.3}
\definecolor{stringcolor}{RGB}{103,5,172}
\definecolor{typecolor}{RGB}{140,75,0}
\definecolor{propcolor}{RGB}{0,100,14}
\definecolor{light_gray}{gray}{0.7}
\lstdefinelanguage{pmshell}
{
  basicstyle=\small\ttfamily,
  keywords=[1]{Hypersurface,ValuatedMatroid,SubdivisionOfPoints},
  morekeywords=[2]{N_ISOLATED, N_FAMILIES, POLYNOMIAL, DEGREE, GENUS, BASES, VALUATION_ON_BASES, WEIGHTS, N_ELEMENTS, FAR_VERTICES, CONES, POINTS, VERTICES, TIGHT_SPAN, MAXIMAL_POLYTOPES, MONOMIALS, COEFFICIENTS, MAXIMAL_CELLS, RAYS, N_RAYS, N_MAXIMAL_CELLS, FACETS, LINEAR_SPAN, CURVE_EDGE_LENGTHS, VISUAL},
  morekeywords=[3]{map, grep, join, secondary_cone},
  stringstyle=\color{stringcolor},
  keywordstyle=[1]\color{typecolor},
  keywordstyle=[2]\color{propcolor},
  keywordstyle=[3]\color{typecolor},
  numbers=none,
  captionpos=b,
  showspaces=false,
  showstringspaces=false,
  morestring=[b]",
  escapechar=&,
  frame=single,
}
\theoremstyle{definition}
\newtheorem{definition}{Definition}[section]
\newtheorem{example}[definition]{Example}
\newtheorem{remark}[definition]{Remark}
\theoremstyle{plain}
\newtheorem{theorem}[definition]{Theorem}
\newtheorem{corollary}[definition]{Corollary}
\newtheorem{lemma}[definition]{Lemma}
\newtheorem{proposition}[definition]{Proposition}
\newtheorem{question}[definition]{Question}
\title[Tropical Geometry of Higher Rank]{Convergent Hahn Series and\\ Tropical Geometry of Higher Rank}
\author{Michael Joswig}
\address{Technische Universit\"at Berlin, Chair of Discrete Mathematics / Geometry, Berlin \& MPI Mathematics in the Sciences, Leipzig, Germany}
\curraddr{}
\email{joswig@math.tu-berlin.de}
\thanks{Research by the first author is partially supported by Einstein Stiftung Berlin and Deutsche Forschungsgemeinschaft (EXC 2046: \enquote{MATH$^+$}, SFB-TRR 195: \enquote{Symbolic Tools in Mathematics and their Application}, and GRK 2434: \enquote{Facets of Complexity}).  Additional support by Institut Mittag-Leffler within the program \enquote{Tropical Geometry, Amoebas and Polytopes} is gratefully acknowledged.}
\author{Ben Smith}
\address{University of Manchester, Oxford Road, Manchester, M13 9PL, United Kingdom}
\email{benjamin.smith-3@manchester.ac.uk}
\thanks{The second author was supported by the EPSRC (1673882), the Heilbronn Institute for Mathematical Research and was funded by the \enquote{Eileen Coyler Prize} from Queen Mary University of London to visit the first author.}
\subjclass[2020]{14T05 (12D15, 52B11)}
\begin{document}

\begin{abstract}
  We propose to study the tropical geometry specifically arising from convergent Hahn series in multiple indeterminates.
  One application is a new view on stable intersections of tropical hypersurfaces.
  Another one is perturbations of rank one tropical polytopes, which is beneficial for algorithmic purposes.
\end{abstract}

\maketitle

\section{Introduction}
\noindent
Tropical geometry connects algebraic geometry over some valued field $\KK$ with polyhedral geometry over the semifield $\TT=(\RR,\min,+)$.
Often it is less important which field $\KK$ is chosen, and a common choice is the field $\puiseux{\CC}{t}$ of formal Puiseux series with complex coefficients.
By taking the convergence of series in $\puiseux{\CC}{t}$ into account, we can pull back the valuation map $\val:\puiseux{\CC}{t}\to\TT$ and then substitute $t$ by some complex number.
Diagrammatically this can be written as
\begin{equation}\label{eq:transfer}
  \begin{tikzcd}
    \TT & \arrow{l}{\val} \puiseux{\CC}{t} \arrow[dashed]{r} & \CC \enspace .
  \end{tikzcd}
\end{equation}
Notice that the substitution, which is represented by the dashed arrow, depends on the choice of the complex number substituted.
This number must lie within the radius of convergence, and so the dashed arrow is \emph{not} a map defined for all Puiseux series.
Nonetheless, conceptually this opens up a road for transferring metric information from tropical geometry over $\TT$ via algebraic geometry over $\puiseux{\CC}{t}$ to metric information over $\CC$.
This idea was exploited recently to obtain new and surprising complexity results for ordinary linear optimisation \cite{ABGJ:2014}, \cite{ABGJ:2018}.
The purpose of this article is to explore generalisations of this concept to tropical geometry of higher rank and its applications.
Observe that a diagram like \eqref{eq:transfer} does not make any sense for an arbitrary valued field: in general, there is no map equivalent to the substitution of $t$ by a complex number.
Instead of Puiseux series in this paper we prefer to work with the larger field of Hahn series as there the valuation map is onto the reals.

Tropical geometry of higher rank was pioneered in articles by Aroca \cite{Aroca:2010a}, \cite{Aroca:2010b} and Aroca, Garay and Toghani \cite{ArocaGarayToghani:2016}.
Their work is motivated by research on algebraic ways of solving systems of differential equations.
This gives a natural notion of a tropical hypersurface of higher rank, and this allows for a higher rank version \cite[Theorem 8.1]{Aroca:2010a} of Kapranov's fundamental theorem of tropical geometry \cite[Theorem 3.2.5]{Tropical+Book}.
Banerjee \cite{Banerjee:2015} focused on tropicalisations of closed subschemes of the torus over higher dimensional local fields.
Foster and Ranganathan \cite{FosterRanganathan:2016degeneration,FosterRanganathan:2016hahn} later considered a more general notion of tropicalisation, and proved these tropicalisations were connected using methods from analytic geometry.
The main result of \cite{FosterRanganathan:2016degeneration} is a generalisation of a result of Gubler \cite{Gubler:2013} to higher rank.
While the exposition in \cite{Banerjee:2015} is restricted to higher rank valuations which are discrete, the articles \cite{FosterRanganathan:2016degeneration} and \cite{FosterRanganathan:2016hahn} also cover the non-discrete case.
Since the above work with more general local fields, a diagram like \eqref{eq:transfer} does not occur.

One approach to tropical geometry goes through the process of tropicalising classical algebraic varieties.
Here we consider a variety $V$ over some valued field $\KK$, and the \emph{tropicalisation} of $V$ is obtained by applying the valuation map to each point of $V$ coordinatewise.
The fundamental theorem of tropical geometry says that this agrees with intrinsic ways to describe a tropical variety \cite[Theorem 3.2.5]{Tropical+Book}.
While typically $\KK$ is assumed to be algebraically closed, a closer view shows that it is worthwhile to also consider real-closed fields, and this leads to Alessandrini's work on the tropicalisation of semialgebraic sets \cite{Alessandrini:2013}.
Working over an ordered field has the advantage that the cancellation of terms, which is the source of many technical challenges in tropical geometry, can be controlled via keeping track of the signs.
This is essential for applications to optimisation as in \cite{ABGJ:2014}, \cite{ABGJ:2018}, \cite{AllamigeonGaubertSkomra:1610.06746} and \cite{ETC}.

Digging even deeper, it turns out that tropicalising with respect to specially crafted fields can allow for stronger results in applications of tropical geometry.
For instance, \cite[Theorem~4.3]{ABGJ:2014}, which is about the complexity of the simplex method, hinges on employing convergent real Hahn series of higher rank; cf.\ \cite[Theorem~3.12]{ABGJ:2014}.
Despite the fact that the basic idea is simple, the algebraic, topological and analytic properties are somewhat subtle.
This is our point of departure, and in Section~\ref{sec:convergent+hahn} we begin with a general description of fields of convergent Hahn series in more than one indeterminate.
A first observation is Proposition~\ref{prop:substitution} on partial evaluations of convergent Hahn series of higher rank, and this gives rise to a higher rank analogue \eqref{eq:frac+diagram} of \eqref{eq:transfer}.
Interestingly, at this level of detail, it is natural to first study Hahn series with real coefficients (leading to ordered and real-closed fields) before addressing the complex (and algebraically closed) case.

With this we are prepared for the main part of this paper, on tropical hypersurfaces of higher rank, which is Section~\ref{sec:hypersurfaces}.
For conciseness we restrict our attention to rank two; yet all statements admit straightforward generalisations to arbitrarily high rank.
Our first contributions are Theorem~\ref{thm:union+polyhedra} and its Corollary~\ref{cor:hypersurface+closure} which describes the closure in the Euclidean topology of an arbitrary rank two tropical hypersurface in terms of ordinary polyhedra.
These results require us to study sets defined by finitely many linear inequalities with respect to the lexicographic ordering on the semimodule $(\TT_2)^d$, which we call \emph{lex-polyhedra}.
A key ingredient in the analysis is the diagram \eqref{eq:poly+diagram} which is a consequence of~\eqref{eq:frac+diagram}.

In tropical geometry, it is fundamental that intersections of tropical varieties do not need to be tropical varieties, in general.
This fact gives rise to technical challenges in proofs in tropical geometry, and the concept of \emph{stable intersection} frequently offers a path towards a solution \cite[\S3.6]{Tropical+Book}.
This is the topic of Section~\ref{sec:stable}.
Theorem~\ref{thm:stable+intersection}, which is a consequence of our main result, allows us to view stable intersection as an instance of the \enquote{symbolic perturbation}
paradigm from computational geometry; e.g., see \cite{Edelsbrunner87} and \cite{EmirisCannySeidel97}.
This should be compared with \cite[\S3.2]{ABGJ:2014} and \cite[\S5]{AllamigeonKatz:1408.6176}, where a similar idea has been applied to obtain perturbations of rank one tropical linear programs; or with the approach to \enquote{genericity by deformation} of monomial ideals \cite[\S6.3]{MillerSturmfels:2005}.
We also motivate the Euclidean topology as a valuable tool for higher rank tropical geometry, as taking Euclidean closures allows us to obtain Theorem~\ref{thm:stable+intersection} far easier.

In Section~\ref{sec:convex} we follow a completely different strand in tropical geometry.
This is about $(\min,+)$-linear algebra, which has been studied for several decades with numerous applications in optimisation, discrete event systems and other areas; cf.\ \cite{BacelliCohenOlsderQuadrat92}, \cite{Butkovic10}, \cite{ETC} and the references there.
Like all of tropical geometry, this has a specifically polyhedral geometry flair; Develin and Yu \cite[Proposition 2.1]{DevelinYu07} proved that the tropical cones (which are precisely the $(\min,+)$-semimodules) agree with the images of ordinary cones over real Hahn series under the valuation map.
This can be seen as a version of the fundamental theorem for tropical convexity.
Working over real Hahn series which are convergent allows us to relate three kinds of objects: ordinary cones over real Hahn series, tropical cones and ordinary cones over the reals.
This is expressed in \eqref{eq:transfer}, and this is the crucial idea behind the recent complexity results on ordinary linear and semidefinite programming via tropical geometry \cite{ABGJ:2014}, \cite{ABGJ:2018}, \cite{AllamigeonGaubertSkomra:1610.06746}; cf.\ Remark~\ref{rem:m-zero}.
Proposition~\ref{prop:cone} is a version of the Develin--Yu Theorem for convergent Hahn series of rank two.
Yet, the core of this section are Theorems~\ref{thm:lex-complex} and~\ref{thm:cone+structure}.
The former gives a decomposition for rank two tropical cones analogous to the covector decomposition for rank one tropical cones \cite[\S6.3]{ETC}; the latter is a tropical convexity analogue to our Theorem~\ref{thm:union+polyhedra} on rank two tropical hypersurfaces.

Section~\ref{sec:remarks} ends this article with several concluding remarks.
In particular, we hint at generalising our results from rank two to arbitrary rank.

\section*{acknowledgements}
  We are indebted to Alex Fink, Tyler Foster, Jeff Giansiracusa, Georg Loho, Diane Maclagan, Kalina Mincheva, Dhruv Ranganathan, Claus Scheiderer, and Sascha Timme for fruitful discussions and valuable hints.
  Further, we are indebted to three anonymous referees for their comments and suggestions.

\section{Higher rank valued fields}
\noindent
We begin by recalling of formal Hahn series and their convergence.
For more details, we refer to~\cite{vanderHoeven:2006} and \cite{MullerStrohmaier:2014}.

\subsection{Multivariate Hahn series}\label{sec:formal+hahn}
\noindent
Let $(\mon,+)$ be a totally ordered abelian group, and let $\cR$ be a commutative ring with $1$.
A formal series
\[
  \gamma \ = \ \gamma(T) \ = \ \sum_{\alpha \in \mon} c_\alpha \cdot T^{\alpha} \quad \text{with } c_\alpha\in\cR
\]
is called a \emph{(formal) Hahn series} if the \emph{support} $\supp(\gamma)=\smallSetOf{\alpha \in \mon }{ c_\alpha \neq 0 }$ is well-ordered.
We write $\hahnseries{\RR}{T^\mon}$ for the set of Hahn series.
With coefficient-wise addition and the usual convolution product, Hahn series form a commutative ring, which contains $\cR$ as $\cR\cdot T^0$, the subring of constant Hahn series.
If $\cR$ is a field, then so is $\hahnseries{\cR}{T^\mon}$.
We are particularly interested in the case where $\cR=\RR$ and $\Gamma=\RR^m$, equipped with the lexicographic ordering, and we abbreviate $\hahnseries{\RR}{T}=\hahnseries{\RR}{T^{\RR^m}}$.
\begin{remark}
  It is known that each well-ordered subset of $\RR^m$ is at most countable.
  We owe the following sketch of a proof to an anonymous referee.
  Assume that $W$ is a well-ordered subset of $\RR$ which is uncountable.
  The uncountably many half-open intervals $[w,w')$ are pairwise disjoint, where $w'$ is the successor of $w$ in $W$.
  As each such interval contains rational numbers, this contradicts the countability of the rationals.
  This argument can be extended to $\RR^m=\RR\times\RR^{m-1}$ for $m>1$ by considering the projections onto the two factors and applying induction.
\end{remark}

As the support of a Hahn series $\gamma\in\hahnseries{\RR}{T}$ is a well ordered set, the \emph{order}
\[
  \val(\gamma) \ := \ \min \supp(\gamma)
\]
of $\gamma$ is defined, unless $\gamma=0$.
If $\val(\gamma)$ is $\alpha_0$, the \emph{leading term} $\lt(\gamma)$ is $c_{\alpha_0} T^{\alpha_0}$, and the \emph{leading coefficient} $\lc(\gamma)$ is $c_{\alpha_0}$.
A nonzero Hahn series is \emph{positive} if $c_{\alpha_0}$ is positive.
This definition turns $\hahnseries{\RR}{T}$ into an ordered field.
In fact, since the additive group of $\RR^m$ is divisible, the field $\hahnseries{\RR}{T}=\hahnseries{\RR}{T^{\RR^m}}$ is real-closed; see \cite[\S4]{Hils:2018} and \cite[\S1.2]{Bochnak:1998}.

The number $m$ is the rank of $\Gamma=\RR^m$ as a free abelian group.
Therefore we say that $\hahnseries{\RR}{T}$ is a field of Hahn series of \emph{rank} $m$.
We call the triplet $\TT_m:=(\RR^m,\min,+)$, where $\min$ is the minimum with respect to the lexicographic ordering, the \emph{rank $m$ tropical semifield}.
The order map gives $\hahnseries{\RR}{T}$ the structure of a valued field with valuation
\[
\val\colon \hahnseries{\RR}{T} \rightarrow \TT_m \cup \{\infty\} \enspace ,
\]
and value group $\RR^m$.
Furthermore, restricting the order map to positive Hahn series gives a homomorphism
\[
  \val\colon\hahnseries{\RR}{T}_{>0}\to\TT_m
\]
of semirings, which reverses the ordering; i.e., $\gamma\leq\gamma'$ implies $\val(\gamma)\geq\val(\gamma')$.

The field of formal Hahn series $\hahnseries{\RR}{T}$ is a large field that satisfies many desirable properties, in particular its real-closedness.
This entails that $\hahnseries{\CC}{T}=\hahnseries{\CC}{T^{\RR^m}}$ is an algebraically closed valued field of characteristic zero.
For $i$ an imaginary unit satisfying $i^2=-1$, we have $\hahnseries{\CC}{T}=\hahnseries{\RR}{T}+i\hahnseries{\RR}{T}$.
This makes it a natural candidate for tropical geometry.
Furthermore, the valuation map being surjective will be an invaluable property when discussing higher rank tropical objects.
Yet, occasionally, we will require mild assumptions concerning convergence, beyond just formal summations.
Therefore, we will treat the field of formal Hahn series as an \enquote{umbrella} field, and consider suitable subfields, which we discuss next.

\subsection{Convergent Hahn series}\label{sec:convergent+hahn}
\noindent
Consider the field $\hahnseries{\RR}{T}$ of Hahn series of rank $m$ with real coefficients.
We may view $T$ as a tuple of $m$ indeterminates $(t_1,\dots,t_m)$ and rewrite the formal monomial $T^\alpha$ as $t_1^{\alpha_1}\cdots t_m^{\alpha_m}$.

We say that a Hahn series $\gamma \in \hahnseries{\RR}{T}$ is \emph{convergent} if there exists a vector $r = (r_1,\dots, r_m)$ of positive real numbers such that the real series
\[
\gamma(\rho) \ = \ \sum_\alpha c_\alpha \rho_1^{\alpha_1} \cdots \rho_m^{\alpha_m}
\]
obtained by substituting $T$ converges absolutely for all $\rho \in (0,r_1] \times \cdots (0,r_m]$.
We call $r$ a \emph{polyradius} for $\gamma$.
The map $\rho\mapsto\gamma(\rho)$ is continuous and real analytic on the interior of its domain of convergence.

If some Hahn series is convergent, we can additionally consider partial substitutions.
Let us consider a second tuple $U=(u_1,\dots,u_n)$ of $n$ indeterminates. 
Extending the construction from Section~\ref{sec:formal+hahn}, we arrive at the field of Hahn series $\hahnseries{\RR}{T,U}=\hahnseries{\RR}{T^{\RR^m},U^{\RR^n}}$ of rank $m+n$.
As a valued field, the value group of $\hahnseries{\RR}{T,U}$ is $\RR^{m+n}$ with lexicographical ordering, therefore we can consider the indeterminates $U$ as having smaller valuation than $T$.
Note that both $\hahnseries{\RR}{T}$ and $\hahnseries{\RR}{U}$ are naturally subfields.

Let $\gamma(T,U) \in \hahnseries{\RR}{T,U}$ be convergent for some polyradius $(r,s)$, and let $\sigma$ be a vector of positive reals in $(0,s_1] \times \cdots \times (0,s_n]$.
Then we can also consider the (partial) evaluation of $\gamma$ by $\sigma$ by substituting $U$ for $\sigma$:
\begin{equation}\label{eq:gamma-T-sigma}
  \gamma(T,\sigma) \ = \ \sum_{(\alpha,\beta)} c_{\alpha,\beta} T^\alpha \sigma^\beta \ = \ \sum_{\alpha} \biggl({\sum_{\beta} c_{\alpha,\beta} \sigma_1^{\beta_1} \cdots \sigma_{n}^{\beta_{n}}}\biggr) T^\alpha \, .
\end{equation}

\begin{proposition}\label{prop:substitution}
  Let  $\gamma(T,U)\in\hahnseries{\RR}{T,U}$ be a Hahn series which converges in the polyradius $(r,s)=(r_1,\dots,r_m,s_1,\dots,s_n)$.
  Then the partial evaluations of $U=(u_1,\dots,u_n)$ at constants $\sigma=(\sigma_1,\dots,\sigma_n)$ with $\sigma_i\in(0,s_i]$ yields a convergent Hahn series $\gamma(T,\sigma)\in\hahnseries{\RR}{T}$.

  A similar result holds for the partial evaluations of $T=(t_1,\dots,t_m)$.
\end{proposition}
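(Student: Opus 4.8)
The strategy is to use Lemma~\ref{lem:mixed} to reduce the question to the evaluation homomorphism $\normedsubalgebra{\RR}{U^*}_s\to\RR$, $g\mapsto g(\sigma)$, applied to the coefficients of $\delta$ regarded as a series in $T$. Write $\delta=\gamma/\gamma'$ with $\gamma,\gamma'\in\normedsubalgebra{\RR}{(T,U)^*}_{(r,s)}$ and $\gamma'$ non-vanishing on $(0,r_1]\times\dots\times(0,s_n]$, so that $\delta$ is a continuous function there; since $\val(\gamma/\gamma')=\val(\gamma)-\val(\gamma')$ by definition, it suffices to treat the numerator and the denominator separately. By Lemma~\ref{lem:mixed} we may write $\gamma=\sum_\alpha g_\alpha(U)\,T^\alpha$ with $g_\alpha\in\normedsubalgebra{\RR}{U^*}_s$, and the computation carried out in its proof gives $\sum_\alpha\norm{g_\alpha}_s\,r^\alpha=\norm{\gamma}_{(r,s)}<\infty$.

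First I would check that $\gamma(T,\sigma):=\sum_\alpha g_\alpha(\sigma)\,T^\alpha$ is a genuine element of $\normedsubalgebra{\RR}{T^*}_r$. Its support is contained in the $T$-support of $\gamma$, hence in a product $A_1\times\dots\times A_m$ with each $A_i\cap[0,R]$ finite, so $\gamma(T,\sigma)$ has good support. By \eqref{eq:continuous} applied in the indeterminates $U$ we have $\abs{g_\alpha(\sigma)}\leq\norm{g_\alpha}_s$ for $\sigma\in(0,s_1]\times\dots\times(0,s_n]$, whence $\norm{\gamma(T,\sigma)}_r=\sum_\alpha\abs{g_\alpha(\sigma)}\,r^\alpha\leq\norm{\gamma}_{(r,s)}<\infty$. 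Applying this to $\gamma$ and to $\gamma'$, and noting that $\gamma'(T,\sigma)$ is nonzero because its value at $T=r$ is $\gamma'(r,\sigma)\neq 0$, we obtain $\delta(T,\sigma)=\gamma(T,\sigma)/\gamma'(T,\sigma)\in\puiseux{\RR}{T^*}$.

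For the order, let $\alpha_0$ be the lexicographic minimum of the $T$-support of $\gamma$; it exists because a product $A_1\times\dots\times A_m$ of sets meeting each $[0,R]$ in a finite set is well ordered for the lexicographic order, and $g_{\alpha_0}\neq 0$. Since $g_\alpha=0$ for all $\alpha$ lex-below $\alpha_0$, the order of $\gamma(T,\sigma)$ equals $\alpha_0$ whenever $g_{\alpha_0}(\sigma)\neq 0$; I would then declare $\sigma$ \emph{admissible} exactly when $g_{\alpha_0}(\sigma)\neq 0$ for the leading $U$-coefficients of both $\gamma$ and $\gamma'$. For admissible $\sigma$ we get $\val(\delta(T,\sigma))=\val(\gamma(T,\sigma))-\val(\gamma'(T,\sigma))=\alpha_0-\alpha_0'$, which is precisely the $T$-order of $\delta$ and manifestly independent of $\sigma$. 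Admissible constants form a dense subset of $(0,s_1]\times\dots\times(0,s_n]$: its complement is contained in the zero set of a nonzero analytic function, since the series-to-function map is injective \cite[Lemma~6.4]{Dries1998} and $g_{\alpha_0}$ (and the corresponding coefficient of $\gamma'$) is nonzero; this is the one point where positivity of the $\sigma_i$ is used. The statement for partial evaluation of $T$ follows by interchanging the roles of $T$ and $U$ as in Lemma~\ref{lem:mixed}.

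I expect the main obstacle to be controlling convergence and good support simultaneously, i.e.\ guaranteeing that the formally reordered series $\sum_\alpha g_\alpha(\sigma)\,T^\alpha$ lands in $\normedsubalgebra{\RR}{T^*}_r$ and not merely in $\valuationring{\RR}{T^*}$; this is exactly what the norm identity borrowed from Lemma~\ref{lem:mixed} secures. A secondary point needing care is pinning down the right genericity condition on $\sigma$, so that no cancellation occurs in the leading term — this is what admissibility is designed to encode.
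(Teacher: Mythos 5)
Your proof is correct and takes essentially the same route as the paper: write $\delta=\gamma/\gamma'$, use Lemma~\ref{lem:mixed} to regard $\gamma,\gamma'$ as series in $T$ with coefficients in $\normedsubalgebra{\RR}{U^*}_s$, evaluate those coefficients at $\sigma$, and read off the order from the surviving leading term. The only deviations are that you verify membership in $\normedsubalgebra{\RR}{T^*}_r$ explicitly via the bound $\norm{\gamma(T,\sigma)}_r\le\norm{\gamma}_{(r,s)}$ (the paper leaves this implicit) and that your admissibility condition asks only that the leading $U$-coefficients of $\gamma$ and $\gamma'$ not vanish at $\sigma$, whereas the paper requires this for all coefficient functions over the $T$-supports; your weaker condition still yields $\val(\delta(T,\sigma))=\alpha_0-\alpha_0'$ independent of $\sigma$ and is compatible with the later uses such as Observation~\ref{obs:supp}.
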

\begin{proof}
One can group the terms of $\gamma$ in the following way:
  \begin{equation}\label{eq:substitution:formal}
    \gamma(T,U) \ = \ \sum_{(\alpha,\beta)} c_{\alpha,\beta} T^\alpha U^\beta \ = \ \sum_{\alpha} \biggl(\underbrace{\sum_{\beta} c_{\alpha,\beta} U^\beta}_*\biggr) T^\alpha 
  \end{equation}
  This holds formally in $\hahnseries{\RR}{T,U}$ without considering aspects of convergence.
  The support of * in \eqref{eq:substitution:formal} is well-ordered; so $\hahnseries{\RR}{T,U}$ is a subfield of $\hahnseries{(\hahnseries{\RR}{U})}{T}$.
  It remains to show that the series $\sum_{\beta} c_{\alpha,\beta} U^\beta$ converges absolutely in the polyradius $s$ for all $\alpha$.
  For any fixed $\alpha_0$, we get
  \[
    r^{\alpha_0} \sum_{\beta} \abs{c_{\alpha_0,\beta}} s^\beta \ = \ \sum_{\beta} \abs{c_{\alpha_0,\beta}} r^{\alpha_0} s^{\beta} \ \leq \ \sum_{\alpha,\beta} \abs{c_{\alpha,\beta}} r^\alpha s^\beta \ < \ \infty \enspace .
  \]
  The term $r^{\alpha_0}$ does not vanish, and hence $\sum_{\beta} \abs{c_{\alpha_0,\beta}} s^\beta$ is finite.
  Therefore $\gamma(T,\sigma)$ is an element of $\hahnseries{\RR}{T}$, moreover it is convergent in the polyradius $r$.
  The roles of $T$ and $U$ can be exchanged.
\end{proof}
  
The valuation of $\gamma(T,\sigma)$ in \eqref{eq:gamma-T-sigma} is given by
\begin{equation}\label{eq:substitution-val}
  \val(\gamma(T,\sigma)) \ = \ \min\bigSetOf{\alpha}{\sum_{\beta} c_{\alpha,\beta}\sigma^\beta \neq 0} \enspace ,
\end{equation}
and therefore depends on the choice of $\sigma$.
The function $\kappa_\alpha$ which sends $\sigma$ to $\sum_{\beta} c_{\alpha,\beta}\sigma^\beta$ depends on $\alpha$, and it is real-analytic on the set $(0,s_1) \times \cdots \times (0,s_n)$.
We call $\sigma$ \emph{admissible} for $\gamma$ if $\kappa_\alpha(\sigma)\neq 0$ for all $\alpha$ in the $T$-support of $\gamma$.
In this case the expression \eqref{eq:substitution-val} does not depend on~$\sigma$.

\begin{definition}
  Let $\hahnseries{\RR}{T}$ be the field Hahn series of rank $m$ with real coefficients.
  We call a subring of $\hahnseries{\RR}{T}$ all of whose elements are convergent a \emph{convergent} subring of Hahn series of rank $m$; often we write $\hahnconvergent{\RR}{T}$ for such a subring.
\end{definition}
By construction convergent subrings of $\hahnseries{\RR}{T}$ inherit the same ordering and valuation.
In general, they may be fields, but they do not need to be.
And if they form fields, they may or may not be real-closed.
However, they do have well defined (partial) evaluation maps for small admissible values.
As with formal Hahn series, restricting to only positive elements makes the valuation map an order-reversing homomorphism.
For a pair of convergent subrings $\hahnconvergent{\RR}{T}$, $\hahnconvergent{\RR}{T,U}$ we obtain the following diagram of semirings:
\begin{equation}\label{eq:frac+diagram}
  \begin{tikzcd}
    \hahnconvergent{\RR}{T}_{>0} \arrow{d}{\val_m} \arrow{r}{\iota} &  \hahnconvergent{\RR}{T,U}_{>0} \arrow{d}{\val_{m+n}} \arrow[dashed]{r}{\pi_{u\mapsto\sigma}} & \hahnconvergent{\RR}{T}_{>0} \arrow{d}{\val_m} \\
    \TT_m \arrow{r}{\iota_*} & \TT_{m+n} \arrow{r}{\pi_{u*}} & \TT_m
  \end{tikzcd}
\end{equation}
A few remarks are in order.
Whenever we wish to distinguish between the various valuation maps we add the appropriate index to the symbol \enquote{$\val$}.
We assume that the map $\iota:\hahnconvergent{\RR}{T}\to\hahnconvergent{\RR}{T,U}$ is an embeddings of subrings such that the induced map $\iota_*:\TT_m\to\TT_{m+n}$ sends the exponent $\alpha$ to $(\alpha,0)$.
The map $\pi_{u*}$ is the projection $(\alpha,\alpha')\mapsto\alpha$ onto the first coordinate.

The dashed arrow labelled $\pi_{u\mapsto\sigma}$ in the diagram \eqref{eq:frac+diagram} is a subtle point.
We define $\pi_{u\mapsto\sigma}(\delta(T,U))$ to be the partial evaluation $\delta(T,\sigma)$ and assume that this is contained in $\hahnconvergent{\RR}{T}$.
The latter expression depends on $\sigma$ (and its admissibility), and hence that map is only partial.
However, for each $\delta\in\hahnconvergent{\RR}{T,U}$ a set of admissible values is defined, and the order of the resulting element in $\hahnconvergent{\RR}{T}$ does not depend on the specific choice of $\sigma$.
In this sense we assume that the diagram \eqref{eq:frac+diagram} commutes, despite the fact that $\pi_{u\mapsto\sigma}$ is not globally defined.

We need to clarify that convergent subrings of $\hahnseries{\RR}{T}$ and $\hahnseries{\RR}{T,U}$ exist which allow for the diagram \eqref{eq:frac+diagram} to commute.
In fact, there is a wide variety of choices; see~\cite{vanderHoeven:2006}.
However, some constructions are fairly involved, and here we are less interested in the specific arithmetic or analytic properties.
For the most part we are content with the following simple example.
\begin{example}
  We call a Hahn series of rank $m$ with finite support an \emph{$m$-variate Hahn polynomial}, and this is always convergent.
  The Hahn polynomials $\hahnpoly{\RR}{T}$ form a convergent subring of $\hahnseries{\RR}{T}$, and this leads to a commutative diagram like \eqref{eq:frac+diagram}.
  Sometimes it is more convenient to work with a field; in that case we can pass to the quotient field of the Hahn polynomials.
  Expanding the inverse of a Hahn polynomial via the geometric series yields a series which is again convergent.
  We call that quotient the field of \emph{$m$-variate Hahn fractions}, and we denote it $\hahnfrac{\RR}{T}$.
  The Hahn fractions form an ordered field which is not real-closed.
\end{example}
Note that for both $\hahnpoly{\RR}{T}$ and $\hahnfrac{\RR}{T}$, the order map $\val_m$ is surjective onto $\RR^m$.
While this is not strictly necessary it makes it easier to formulate some results below.
For example, one can consider the subfield of Hahn fractions with rational coefficients and exponents, which is countable.
For that subfield the order map is clearly not surjective onto $\RR^m$; but that field is well suited algorithmically.
In the univariate case this construction recovers the \enquote{Puiseux fractions} from \cite{JoswigLohoLorenzSchroeter:2016} and \cite[\S2.6]{ETC}.

\begin{example}\label{exmp:diagram}
  Consider the case $m=n=1$ with $T=(t_1)$, $t_1=t$ and $U=(u_1)$, $u_1=u$.
  Let us look at the series
  \[
    \begin{split}
      \gamma(t,u) \ &= \ \sum_{\alpha\in\NN,\, \beta\in\NN\setminus\{0\}} t^\alpha u^\beta  \ = \ \sum_{\alpha\in\NN} \biggl( \sum_{\beta\in\NN\setminus\{0\}} u^\beta \biggr) t^\alpha \\
      &= \  \biggl( \sum_{\alpha\in\NN} t^\alpha \biggr)  \biggl( \sum_{\beta\in\NN\setminus\{0\}} u^\beta \biggr) \ = \ \biggl( \sum_{\alpha\in\NN} t^\alpha \biggr)  \biggl( u \cdot \sum_{\beta\in\NN} u^\beta \biggr)
      \enspace \\
      &= \frac{1}{1-t} \cdot \frac{u}{1-u} \enspace .
    \end{split}
  \]
  While this is not a Hahn polynomial itself, it is a positive element in the field of Hahn fractions $\hahnfrac{\RR}{t,u}$.
  For the polyradius of convergence we may pick, e.g., $(\tfrac{3}{4},\tfrac{3}{4})$.

  The partial evaluation $u\mapsto\tfrac{1}{2}$ is defined, and we arrive at
  \[
    \pi_{u\mapsto\frac{1}{2}}(\gamma(t,u))  \ = \ \gamma(t,\tfrac{1}{2}) \ = \ \frac{1}{1-t} \cdot \tfrac{1}{2} \frac{1}{1-\tfrac{1}{2}} \ = \ \frac{1}{1-t} \enspace ,
  \]
  which is an element of $\hahnfrac{\RR}{t}$.
  Clearly, other partial evaluations yield other results, such as, e.g.,
  \[
    \pi_{u\mapsto\frac{1}{3}}(\gamma(t,u)) \ = \ \gamma(t,\tfrac{1}{3}) \ = \ \frac{1}{1-t} \cdot \tfrac{1}{3} \frac{1}{1-\tfrac{1}{3}} \ = \ \tfrac{1}{2}\frac{1}{1-t} \enspace .
  \]
  We have $\val_2(\gamma)=(0,1)$ and
  \[
    \val\bigl(\gamma(t,\tfrac{1}{2})\bigr) \ = \ \val\bigl(\gamma(t,\tfrac{1}{3})\bigr) \ = \ 0 \ = \ \pi_{u*}(\val_2(\gamma)) \enspace .
  \]
  In this example all real numbers in the open interval $(0,1)$ are admissible.
\end{example}

In Proposition~\ref{prop:substitution} the roles of the $T$-variables and the $U$-variables are symmetric.
Yet the definition of $\val_2$ breaks this symmetry.
The following example shows that $T$ and $U$ cannot be exchanged in \eqref{eq:frac+diagram}.
Nonetheless the notations \enquote{$\pi_{t\mapsto\rho}$} and \enquote{$\pi_{t*}$} make sense; the map $\pi_{t*}$ is the projection $(\alpha,\alpha')\mapsto\alpha'$ onto the second coordinate.

\begin{example}\label{exmp:not-interchangeable}
  For $\gamma(t,u) = tu^3+t^2u^{-1}$ in $\hahnfrac{\RR}{t,u}$ we have $\val_2(\gamma)=(1,3)$.
  According to \eqref{eq:frac+diagram} we have the equality
  \[
    \val(\pi_{u \mapsto 1}(\gamma)) \ = \ \val(t+t^2) \ = \ 1 \ = \ \pi_{u*}(1,3) \enspace .
  \]
  Yet, here the roles of $t$ and $u$ cannot be exchanged:
  \[
    \val(\pi_{t \mapsto 1}(\gamma)) \ = \ \val(u^{-1}+u^3) \ = \ -1 \ \neq \ \pi_{t*}(1,3) \enspace .
  \]
\end{example}

\begin{remark}\label{rem:m-zero}
 It is worth noting that the case $m=0$ and $n=1$ does make sense in \eqref{eq:frac+diagram}.
 Then we have $T=()$ and $U=(u)$, leading to $\hahnconvergent{\RR}{T}\cong\RR$ and $\TT_0=\{0\}$; the map $\iota$ sends $c\in\RR_{>0}$ to the constant Hahn series $c\cdot u^0\in \hahnconvergent{\RR}{u}$, and $\val_0$ is the trivial valuation on the positive reals.
 The right half of the diagram now degenerates to the real version of \eqref{eq:transfer} as:
 \begin{equation}
   \begin{tikzcd}
     \hahnconvergent{\RR}{u}_{>0} \arrow{d}{\val} \arrow[dashed]{r}{\pi_u} & \RR_{>0} \\
     \TT &
   \end{tikzcd}
 \end{equation}
 In fact, this can be exploited to pull back metric information from the semimodule $\TT^k$ and project it to (the positive orthant of) the real vector space~$\RR^k$, for arbitrary $k$.
 This is a key idea behind \cite{ABGJ:2018}, where this approach was used to show that standard versions of the interior point method cannot solve ordinary linear programs in strongly polynomial time.
\end{remark}

\section{Rank two tropical hypersurfaces}
\label{sec:hypersurfaces}
\noindent
In the sequel we will be investigating the special case where $m = n = 1$, and we postpone questions of convergence.
Moreover, we need an algebraically closed field.
So we consider the field of formal Hahn series of rank two with complex coefficients
\[
  \hahnseries{\CC}{t,u} \ = \ \hahnseries{\RR}{t,u} + i \, \hahnseries{\RR}{t,u} \enspace ,
\]
where $i=\sqrt{-1}$ is an imaginary unit, and this field is equipped with the surjective rank two valuation map $\val_2$.
For improved readability we abbreviate $\LL=\hahnseries{\CC}{t,u}$.

\begin{remark}
  The objects in the following may have two topologies placed on them, the Euclidean topology and the order topology.
  To distinguish between them, we use $\RR^m$ when the underlying set is equipped with the Euclidean topology, and $\TT_m$ when the underlying set is equipped with the order topology.
  Note that $\RR^m$ and $\TT_m$ agree as sets, however it will be useful throughout to differentiate between their topologies. 
\end{remark}

The following is based on \cite{Aroca:2010b} and \cite{Aroca:2010a}.
Given a Laurent polynomial $f = \sum \gamma_sx^s \in \LL[x_1^{\pm},\dots,x_d^{\pm}]$, the \emph{rank two tropicalisation of $f$} is the tropical polynomial obtained from $f$ by applying $\val_2$ to each coefficient and replacing addition and multiplication with their tropical counterparts.
This induces the tropical polynomial map
\begin{alignat*}{2}
  \trop_2(f) : {(\TT_2)}^d &\longrightarrow \TT_2 \\
  p &\longmapsto \min \SetOf{\val_2 (\gamma_s) + \langle s , p \rangle}{s\in\supp(f)} \enspace ,
\end{alignat*}
where $\langle s , p \rangle$ is the pairing
\begin{equation}\label{eq:pairing}
  \begin{aligned}
    \langle -,-\rangle \,:\, \ZZ^d \times {(\TT_2)}^d \ &\longrightarrow \ \TT_2 \\ 
    \bigl( (s_1,\dots,s_d),(p_1,\dots,p_d)\bigr) \ &\longmapsto \ \sum_{i=1}^d (s_ip_{1i}, s_ip_{2i}) \enspace . 
  \end{aligned}
\end{equation}
For every $p \in (\TT_2)^d$ there exists at least one term of the polynomial where $\trop(f)$ attains its minimum, and hence the set 
\[
  \cD_p(f) \ = \ \SetOf{s \in \ZZ^d}{\trop_2(f)(p) = \val_2 (\gamma_s) + \langle s , p\rangle}
\]
is not empty.
\begin{definition}\label{def:hypersurface}
  The \emph{rank two tropical hypersurface} of $f$ is the set
  \[
    \tropvariety_2(f) \ = \ \SetOf{p \in {(\TT_2)}^d}{|\cD_p(f)| > 1} \enspace .
  \]
\end{definition}
As with rank one tropical hypersurfaces, this construction commutes with taking the coordinatewise valuation of the zero set of $f$.
Here it is essential that $\LL$ is algebraically closed and that the valuation map is surjective onto~$\TT_2$.
\begin{theorem}[{\cite[Theorem 8.1]{Aroca:2010a}}] \label{thm:rank2+kapranov}
  Let $f \in \LL[x_1^{\pm},\dots,x_d^{\pm}]$.
  The rank two tropical hypersurface of $f$ is the set of pointwise valuations of the zero set of~$f$, i.e.,
  \[
    \tropvariety_2(f) \ = \ \SetOf{\bigl(\val_2(p_1),\dots,\val_2(p_d)\bigr)}{p\in \LL^d ,\, f(p) = 0} \enspace .
  \]
\end{theorem}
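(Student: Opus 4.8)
The plan is to follow the classical proof of Kapranov's theorem, with the residue field $\CC$ and the lexicographically ordered value group $\RR^2$ of $\val_2$ taking over the roles played in the rank one case by the residue field and value group. Here I would use that, by \eqref{eq:val-closure}, the map $\val_2$ is the composite of the $t$-adic valuation on $\LL=\puiseux{\puiseux{\CC}{u^*}}{t^*}$ with the ordinary valuation on $\puiseux{\CC}{u^*}$, so it is a genuine valuation whose value group is all of $\RR^2$ and whose residue field is $\CC$; and I would read the zero set of the Laurent polynomial $f=\sum_s\gamma_sx^s$ inside the torus $(\LL^{*})^d$. The inclusion $\supseteq$ is the easy one: for $p\in(\LL^{*})^d$ with $f(p)=0$, putting $w=(\val_2(p_1),\dots,\val_2(p_d))$, one has $\val_2(\gamma_sp^s)=\val_2(\gamma_s)+\langle s,w\rangle$, so the lexicographic minimum of these values over $s\in\supp(f)$ is $\trop_2(f)(w)$ and is attained on $\cD_w(f)$; if $\cD_w(f)=\{s_0\}$ were a single index, then $\gamma_{s_0}p^{s_0}$ would strictly dominate every other summand of $f(p)$, so $f(p)\neq 0$, a contradiction. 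Hence $|\cD_w(f)|>1$, i.e.\ $w\in\tropvariety_2(f)$.

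For the reverse inclusion I would argue by reduction to the univariate case. Given $w=(w_1,\dots,w_d)\in\tropvariety_2(f)$, with $w_i=(w_{1i},w_{2i})$, I would first form the initial form $\mathrm{in}_w(f)\in\CC[x_1^{\pm},\dots,x_d^{\pm}]$ over the residue field $\CC$, whose monomials are exactly the $x^s$ with $s\in\cD_w(f)$; by hypothesis it is not a monomial. Choosing two of its monomials $x^s,x^{s'}$ and permuting coordinates so that $s_d\neq s'_d$, I would write $\mathrm{in}_w(f)=\sum_jh_j(x_1,\dots,x_{d-1})x_d^j$ and note that $h_{s_d}$ and $h_{s'_d}$ are nonzero, so for a generic $\bar a\in(\CC^{*})^{d-1}$ one has $h_{s_d}(\bar a)\neq 0$ and $h_{s'_d}(\bar a)\neq 0$, the product $h_{s_d}h_{s'_d}$ being a nonzero polynomial over the infinite field $\CC$. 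Then I would set $a_i=\bar a_i\,t^{w_{1i}}u^{w_{2i}}\in\LL^{*}$ for $i<d$, which manifestly satisfies $\val_2(a_i)=w_i$, and specialise to $g(x_d)=f(a_1,\dots,a_{d-1},x_d)\in\LL[x_d^{\pm}]$.

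The next step is a term-by-term comparison: using $\val_2(a_i)=w_i$ and the definition of $\mathrm{in}_w(f)$ one checks that, for each exponent $j$, the coefficient of $x_d^j$ in $g$ has $\val_2$-order exactly $\mu-jw_d$ (with $\mu=\trop_2(f)(w)$) precisely when $h_j(\bar a)\neq 0$, because the indices $s\in\cD_w(f)$ with $s_d=j$ jointly contribute residue $h_j(\bar a)$ at that order while all other $s$ with $s_d=j$ contribute only at strictly higher order. Hence the $w_d$-initial form of $g$ contains the monomials $x_d^{s_d}$ and $x_d^{s'_d}$ and is not a monomial. Since $\LL$ is algebraically closed, $g$ splits into linear forms over $\LL$ and the valuations of its roots can be read off its Newton polygon; this yields a root $p_d\in\LL^{*}$ with $\val_2(p_d)=w_d$ exactly because the $w_d$-initial form of $g$ is non-monomial. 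Finally $p=(a_1,\dots,a_{d-1},p_d)\in(\LL^{*})^d$ satisfies $f(p)=g(p_d)=0$ and $\val_2(p)=w$, which is what is needed.

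The main obstacle will be this last step. Reducing to one variable forces one to choose the substitution $x_i\mapsto a_i$ so that the relevant monomials of $f$ do not simultaneously cancel — which is what genericity of $\bar a$ is there to guarantee — and it requires the Newton polygon dictionary between non-monomial initial forms and roots of prescribed valuation to survive for $\val_2$, whose value group $\RR^2$ is lexicographically ordered and not a subgroup of $\RR$. Establishing that dictionary in the higher rank setting, i.e.\ that a univariate Laurent polynomial over $\LL$ with non-monomial $v$-initial form has a root of valuation $v$, is the technical heart of the matter; an alternative route would iterate the rank one fundamental theorem, first along $t$ and then along $u$, but the lifting one has to carry out there is of the same difficulty.
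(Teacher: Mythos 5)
The paper does not prove this statement at all: it is quoted directly from Aroca \cite[Theorem 8.1]{Aroca:2010a}, so there is no in-paper argument to compare against, and your proposal should be measured against the cited source, whose proof is likewise a Kapranov-style argument. Your outline is exactly that classical route and it is sound: the residue field of $(\LL,\val_2)$ is $\CC$ and the value group is all of $\RR^2$ with the lexicographic order; the easy inclusion by strict domination of a unique minimal term is valid for any totally ordered value group; and the genericity of $\bar a\in(\CC^*)^{d-1}$ is guaranteed because $h_{s_d}h_{s'_d}$ is a nonzero Laurent polynomial over an infinite field. The only step you leave unestablished is the one you flag as the technical heart, namely that a univariate Laurent polynomial $g$ over $\LL$ whose $v$-initial form is not a monomial has a root of valuation exactly $v$. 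That lemma is true and, in fact, not delicate: $\LL$ is algebraically closed by Proposition~\ref{prop:closure}, so $g$ splits into linear factors, and the classical Newton-polygon bookkeeping (expressing the coefficients through elementary symmetric functions of the roots and using only the ultrametric inequality and multiplicativity of $\val_2$) never uses that the value group is archimedean or embeds in $\RR$; it carries over verbatim to $\RR^2$ with the lexicographic ordering. With that lemma written out (or imported from Aroca's treatment of univariate polynomials), your argument is complete and is essentially the proof of the cited result.
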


As rank one tropical hypersurfaces are ordinary polyhedral complexes, we would like an analogous structure for rank two tropical hypersurfaces.
As sets $\TT_2$ and $\RR^{2}$ are equal, but the order topology (on $\TT_2$) is strictly finer than the Euclidean topology (on $\RR^2$); recall that the open intervals form a basis of the order topology.
Similarly $(\TT_2)^d$ and $(\RR^{2})^d$ are equal as sets but the respective product topologies are distinct.
In particular, $(\RR^{2})^d$ is homeomorphic with $\RR^{2\times d}$, and we use the latter notation for readability.
Furthermore, we shall write point coordinates as $2{\times}d$-matrices 
\[
\begin{pmatrix}
p_{11} & \dots & p_{1d} \\
p_{21} & \dots & p_{2d}
\end{pmatrix}
\]
to emphasise that points are $d$-tuples of elements of $\RR^2$ or $\TT_2$.

\begin{example} \label{ex:rank+two+hypersurface}
  For the bivariate linear polynomial $f = x_1 + tx_2 + t^2u \in \LL[x_1,x_2]$ its rank two tropical hypersurface is the following subset of $(\TT_2)^2$.
  \[
    \begin{aligned}
      \tropvariety_2(f)
      \ = \ &\SetOf{\smallpmatrix{p_{11} & p_{12} \\ p_{21} & p_{22}}}{\smallcolvectwo{0}{0} + \smallcolvectwo{p_{11}}{p_{21}} = 
        \smallcolvectwo{2}{1} \leq
        \smallcolvectwo{1}{0} + \smallcolvectwo{p_{12}}{p_{22}}} \\
      &\cup \SetOf{\smallpmatrix{p_{11} & p_{12} \\p_{21} & p_{22}}}
      {\smallcolvectwo{1}{0} + \smallcolvectwo{p_{12}}{p_{22}} =
        \smallcolvectwo{2}{1} \leq
        \smallcolvectwo{0}{0} + \smallcolvectwo{p_{11}}{p_{21}}} \\
      &\cup \SetOf{\smallpmatrix{p_{11} & p_{12} \\p_{21} & p_{22}}}
      {\smallcolvectwo{0}{0} + \smallcolvectwo{p_{11}}{p_{21}} =
        \smallcolvectwo{1}{0} + \smallcolvectwo{p_{12}}{p_{22}} \leq
        \smallcolvectwo{2}{1}} \\
    = \ &\SetOf{\smallpmatrix{2 & 1 \\1 & 1} + \smallpmatrix{0 & \lambda_1 \\0 & \lambda_2}}
      {\smallcolvectwo{\lambda_1}{\lambda_2} \geq \smallcolvectwo{0}{0}} \\
      &\cup \SetOf{\smallpmatrix{2 & 1 \\1 & 1} + \smallpmatrix{\lambda_1 & 0 \\\lambda_2 & 0}}
      {\smallcolvectwo{\lambda_1}{\lambda_2} \geq \smallcolvectwo{0}{0}} \\
      &\cup \SetOf{\smallpmatrix{2 & 1 \\1 & 1} + \smallpmatrix{-\lambda_1 & -\lambda_1 \\-\lambda_2 & -\lambda_2}}
      {\smallcolvectwo{\lambda_1}{\lambda_2} \geq \smallcolvectwo{0}{0}}
    \end{aligned}
  \]
  Recall that \enquote{$\leq$} and \enquote{$\geq$} refers to the lexicographic ordering.
  Due to this ordering, $\tropvariety_2(f)$ is not closed in the Euclidean topology.
  For example, consider the sequence of points 
  \[
  \begin{pmatrix}2 & 1 + c_k \\ 1 & 0 \end{pmatrix} \ \in \ (\TT_2)^2 \enspace ,
  \]
  where $c_k \rightarrow 0$ is a null sequence of positive reals.
  Each of these points are contained in $\tropvariety_2(f)$ but its limit is not.
\end{example}

Example \ref{ex:rank+two+hypersurface} highlights that rank two tropical hypersurfaces are not closed in the Euclidean topology.
Thus they do not have the structure of a polyhedral complex as rank one tropical hypersurfaces do.
Instead, we can consider polyhedral-like structures with respect to the lex-order topology on~$\TT_2$.

We recall the following notions from \cite{FosterRanganathan:2016degeneration,FosterRanganathan:2016hahn}.
There is a natural pairing \eqref{eq:pairing} which arises from considering the abelian group $\TT_2$ as a $\ZZ$-module.
A \emph{lex-halfspace} in ${(\TT_2)}^d$ is a set of the form
\[
\lex{H}_{s,q} \ = \ \SetOf{p \in {(\TT_2)}^d}{\langle s, p \rangle \leq q}
\]
for some fixed \emph{slope} $s \in \ZZ^d$ and \emph{affine constraint} $q \in \RR^2$.
Its \emph{boundary} is
\begin{equation}\label{eq:boundary}
  \SetOf{p \in {(\TT_2)}^d}{\langle s, p \rangle = q} \ = \ \lex{H}_{s,q} \cap \lex{H}_{-s,q} \enspace .
\end{equation}
Note that the slopes are integral vectors as we are considering Laurent polynomials (whose exponents lie in $\ZZ^d$) with coefficients in $\LL$, which is equipped with a rank two valuation that is \emph{not} discrete.
Thus $\ZZ^d$ arises as a factor of the domain of the pairing map \eqref{eq:pairing}.

\begin{definition}\label{def:lex-polyhedron}
A \emph{lex-polyhedron} $\lex{P}$ in ${(\TT_2)}^d$ is any intersection of finitely many lex-halfspaces
\begin{equation}\label{eq:lex-polyhedron}
   \lex{P} \ = \ \lex{H}_{s_1,q_1} \cap \cdots \cap \lex{H}_{s_r,q_r} \enspace .
\end{equation}
A \emph{face} of $\lex{P}$ is the intersection with any number of boundaries of the lex-halfspaces defining $\lex{P}$.
Its \emph{relative interior} $\inte(\lex{P})$ is the set of points contained in $\lex{P}$ but in no face of $\lex{P}$.
A \emph{lex-polyhedral complex} in ${(\TT_2)}^d$ is a finite collection $\{\lex{P}_j\}_{j \in J}$ of lex-polyhedra in ${(\TT_2)}^d$ such that every face of $\lex{P}_j$ also lies in the collection and the intersection of any two lex-polyhedra also lies in the collection.
\end{definition}

Note that \cite{FosterRanganathan:2016degeneration,FosterRanganathan:2016hahn} simply refer to these as \enquote{polyhedra}.
As we are also working with ordinary and tropical polyhedra, we use the prefix \enquote{lex} to stress the underlying lexicographical ordering, and use a bold typeface to differentiate it.
By \eqref{eq:boundary}, boundaries of lex-halfspaces and thus faces are lex-polyhedra.
Lex-polyhedra are necessarily closed in the order topology.

Given some subset $S \subseteq \supp(f)$, we define the \emph{support cell}
\begin{equation}
  \lex{P}_S(f) \ = \ \SetOf{p \in (\TT_2)^d}{S \subseteq \cD_p(f)}, \quad \text{for }S \subseteq \supp(f) \enspace .
\end{equation}
By definition, $\lex{P}_S = \lex{P}_S(f)$ is cut out by lex-halfspaces defined by the inequalities of the form
\begin{equation} \label{eq:hypersurface+lex+cell}
  \val_2 (\gamma_s) + \langle s , p\rangle \ \leq \ \val_2 (\gamma_s') + \langle s' , p\rangle \,, \quad \text{for }s \in S \text{ and } s' \in \supp(f)
\end{equation}
and so has the structure of a lex-polyhedron.

Note that for a non-generic polynomial $f$, there may exist $S$ such that $\trop_2(f)$ does not attain its minimum at precisely $S$ when evaluated at any point in $\lex{P}_S$.
Equivalently, there may exist $S, T$ such that $S \neq T$ but their support cells are equal as sets, i.e., $\lex{P}_S = \lex{P}_T$.
Any point in the support cells satisfies $S, T \subseteq \cD_p(f)$ and so they are equal to $\lex{P}_S \cup \lex{P}_T$ as a set.
This implies any support cell can be labelled by a unique maximal set, which we call the \emph{support set} i.e., $S$ is a support set of $f$ if $\lex{P}_S(f) = \lex{P}_T(f)$ implies $T \subseteq S$.
Note that the rank one analogue of support cells in $\TT^d$ are ordinary polyhedra; see \cite[Proposition 3.1.6]{Tropical+Book} and Question~\ref{qst:regular+refinement} below.
Support cells have some nice combinatorial properties:

\begin{lemma} \label{lem:support+cells}
  Let $S,T$ be support sets.
  \begin{enumerate}
  \item $\lex{P}_S \cap \lex{P}_T = \lex{P}_{S\cup T}$.
  \item $S \subset T$ if and only if $\lex{P}_T$ is a face of $\lex{P}_S$.
  \end{enumerate}
\end{lemma}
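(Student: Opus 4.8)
The plan is to prove both parts directly from the definition of the support cells as intersections of lex-halfspaces, using the fact that $S,T$ are support sets (so they are the unique maximal labels for their cells). For part (1), I would argue by mutual inclusion. A point $p$ lies in $\lex{P}_S \cap \lex{P}_T$ if and only if both $S \subseteq \cD_p(f)$ and $T \subseteq \cD_p(f)$, which is equivalent to $S \cup T \subseteq \cD_p(f)$, i.e., $p \in \lex{P}_{S \cup T}$. This is essentially a triviality once one unwinds the definition of $\cD_p(f)$ and $\lex{P}_S(f)$, so the only thing to be careful about is that $S \cup T$ need not itself be a support set; but the statement of the lemma only asserts the equality of the two sets $\lex{P}_S \cap \lex{P}_T$ and $\lex{P}_{S \cup T}$, and this holds regardless.

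For part (2), I would again split into the two implications. Suppose first that $S \subset T$. Then $\lex{P}_T \subseteq \lex{P}_S$ since $T \subseteq \cD_p(f)$ forces $S \subseteq \cD_p(f)$. By part (1), $\lex{P}_S \cap \lex{P}_T = \lex{P}_{S \cup T} = \lex{P}_T$. I want to exhibit $\lex{P}_T$ as a face of $\lex{P}_S$, i.e., as the intersection of $\lex{P}_S$ with boundaries of some of its defining lex-halfspaces. Recall $\lex{P}_S$ is cut out by the inequalities \eqref{eq:hypersurface+lex+cell} ranging over $s \in S$, $s' \in \supp(f)$. Imposing equality in those inequalities indexed by a pair $(s,s')$ with $s \in S$ and $s' \in T \setminus S$ carves out exactly the locus where $s'$ also lies in $\cD_p(f)$; intersecting $\lex{P}_S$ with all such boundaries (over all $s' \in T\setminus S$, using any fixed $s \in S$) yields the set of $p$ with $S \cup (T \setminus S) = T \subseteq \cD_p(f)$, which is $\lex{P}_T$. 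Hence $\lex{P}_T$ is a face of $\lex{P}_S$.

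Conversely, suppose $\lex{P}_T$ is a face of $\lex{P}_S$. Then in particular $\lex{P}_T \subseteq \lex{P}_S$, so every point of $\lex{P}_T$ has $S \subseteq \cD_p(f)$; combined with $T \subseteq \cD_p(f)$ this gives $\lex{P}_T \subseteq \lex{P}_{S \cup T}$, and since also $\lex{P}_{S\cup T}\subseteq\lex{P}_T$ trivially, we get $\lex{P}_T = \lex{P}_{S \cup T}$. Because $T$ is a \emph{support set}, the maximality clause forces $S \cup T \subseteq T$, i.e., $S \subseteq T$. It remains to rule out $S = T$: but a lex-polyhedron $\lex{P}_S$ is not a face of itself unless one counts the improper face, and by convention here a face is obtained by intersecting with boundaries of \emph{some} (nonempty, or at least proper) set of defining halfspaces; since $S$ and $T$ are \emph{distinct} support sets, $\lex{P}_S \ne \lex{P}_T$ forces $S \subsetneq T$. (If the paper's convention allows the improper face, this clause is vacuous and one simply concludes $S \subseteq T$.) The main obstacle I anticipate is purely bookkeeping: making sure that "imposing equality in the defining inequalities" really produces the cell $\lex{P}_T$ on the nose and not merely a set containing it, which is where the maximality of $T$ as a support set is doing the real work.
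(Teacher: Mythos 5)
Your proof is correct and takes essentially the same route as the paper's: both argue directly from the defining inequalities \eqref{eq:hypersurface+lex+cell} and the maximality built into the notion of a support set, with part (1) being an unwinding of the definition of $\cD_p(f)$ and part (2) identifying faces with the loci where additional monomials attain the minimum. If anything, you are more careful than the paper about the converse direction of (2) and about the strict-versus-non-strict inclusion (the paper's terse proof leaves both implicit), so no changes are needed.
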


\begin{proof}
Denote inequalities of the form \eqref{eq:hypersurface+lex+cell} by $\alpha_{s,s'}$.
Consider the intersection $\lex{P}_S \cap \lex{P}_T$, it is cut out by the union of inequalities defining $\lex{P}_S$ and $\lex{P}_T$.
These are precisely the inequalities $\alpha_{s,s'}$ for $s \in S \cup T$, and is therefore equal to $\lex{P}_{S \cup T}$.
Furthermore, as $S, T$ are support sets, their union also is.

Any face of $\lex{P}_S$ is defined by setting certain inequalities of \eqref{eq:hypersurface+lex+cell} to equalities, or equivalently by adding the inequality $\alpha_{s',s}$.
If $T \supset S$ is the set of elements of $\supp(f)$ contained in an equality, then $\alpha_{s,s'}$ holds for all $s \in T$ and $s' \in \supp(f)$.
Therefore $T$ is a support set and $\lex{P}_T$ is the corresponding face of $\lex{P}_S$.
\end{proof}

\begin{remark} \label{rem:support+cells}
Lemma \ref{lem:support+cells} has two important consequences.
The first is that by associating support cells with their unique support set, each support cell has a canonical halfspace description via \eqref{eq:hypersurface+lex+cell}.
Furthermore, as faces of support cells are themselves support cells, this extends to a canonical inequality description of each face.
The second consequence is that as the faces of $\lex{P}_S$ are the points $p$ such that $S \subsetneq \cD_p(f)$, the relative interior of $\lex{P}_S$ is the set
\[
\inte(\lex{P}_S) \ = \ \SetOf{p \in (\TT_2)^d}{S = \cD_p(f)} \enspace .
\]
Note that this is not true if $S$ is not a support set.
\end{remark}

\begin{remark}
In topology the term ``cell'' is typically used for subsets of $\RR^{2\times d}$ which are homeomorphic with some closed Euclidean ball.
Here we deviate slightly based on the topology that we are using.
When working with $\RR^{2\times d}$ and the Euclidean topology, our cells will be convex polyhedra, whereas when working with $(\TT_2)^d$ and the order topology, our cells will be lex-polyhedra.
Note that in both cases, cells may be unbounded.
\end{remark}

\cite[Theorem 2.5.2]{FosterRanganathan:2016hahn} and \cite[Proposition 1.2]{NisseSottile:2013} show $\tropvariety_2(f)$ carries the structure of a lex-polyhedral complex. 
The following shows that this lex-polyhedral complex is labelled by subsets of monomials of $f$.

\begin{proposition} \label{prop:lex+polyhedral+hypersurface}
The rank two tropical hypersurface $\tropvariety_2(f)$ is a lex-polyhedral complex whose cells are of the form $\lex{P}_S$, where $S$ is a support set of cardinality greater than one.
\end{proposition}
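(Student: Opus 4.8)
The plan is to exhibit the lex-polyhedral complex structure on $\tropvariety_2(f)$ explicitly, declaring its cells to be exactly the support cells $\lex{P}_S$ with $S$ a support set of cardinality at least two, and then to verify the axioms of Definition~\ref{def:lex-polyhedron} using only Lemma~\ref{lem:support+cells}. So the statement reduces to two things: a set-level identity describing $\tropvariety_2(f)$ as a union of such support cells, and closure of this collection under faces and intersections.

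First I would prove the set identity
\[
  \tropvariety_2(f) \ = \ \bigcup\smallSetOf{\lex{P}_S}{S\text{ a support set of }f,\ |S|>1}\enspace .
\]
The inclusion ``$\supseteq$'' is immediate: $p\in\lex{P}_S$ with $|S|>1$ gives $S\subseteq\cD_p(f)$, so $|\cD_p(f)|>1$ and $p\in\tropvariety_2(f)$ by Definition~\ref{def:hypersurface}. For ``$\subseteq$'' the key observation is that $\cD_p(f)$ is always a support set: if $\lex{P}_{\cD_p(f)}=\lex{P}_T$ then $p\in\lex{P}_T$ (because $p\in\lex{P}_{\cD_p(f)}$), hence $T\subseteq\cD_p(f)$. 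Thus, given $p\in\tropvariety_2(f)$, the set $S=\cD_p(f)$ is a support set with $|S|>1$ and $p\in\lex{P}_S$. This small observation is the only point that needs a genuine, if short, argument; everything after it is bookkeeping on top of Lemma~\ref{lem:support+cells}.

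Finally I would check that the collection $\mathcal C=\smallSetOf{\lex{P}_S}{S\text{ a support set},\ |S|>1}$ is a lex-polyhedral complex. It is finite since $\supp(f)$ is, and each $\lex{P}_S$ is a lex-polyhedron, being cut out by the inequalities~\eqref{eq:hypersurface+lex+cell}. Closure under faces is Lemma~\ref{lem:support+cells}(2): a face of $\lex{P}_S$ is $\lex{P}_S$ itself or $\lex{P}_T$ with $T$ a support set and $S\subsetneq T$, hence $|T|\geq|S|>1$ and the face lies in $\mathcal C$. Closure under intersection is Lemma~\ref{lem:support+cells}(1): for support sets $S,T$ one has $\lex{P}_S\cap\lex{P}_T=\lex{P}_{S\cup T}$, with $S\cup T$ again a support set of cardinality $\geq|S|>1$, so the intersection is in $\mathcal C$ (empty intersections being handled by the usual convention that the empty set is a degenerate lex-polyhedron). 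Together with the set identity, this shows $\mathcal C$ is a lex-polyhedral complex with underlying set $\tropvariety_2(f)$ whose cells are precisely the $\lex{P}_S$ for $S$ a support set with $|S|>1$; and Remark~\ref{rem:support+cells} confirms these cells genuinely occur, since $\cD_p(f)=S$ for $p\in\inte(\lex{P}_S)$. I do not anticipate any real obstacle: this even yields a self-contained proof that $\tropvariety_2(f)$ carries a lex-polyhedral complex structure, so that \cite{FosterRanganathan:2016hahn} and \cite{NisseSottile:2013} are needed only for context.
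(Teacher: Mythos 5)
Your proof is correct and takes essentially the same route as the paper: both declare the cells to be the support cells $\lex{P}_S$ with $|S|>1$ and invoke Lemma~\ref{lem:support+cells} for closure under faces and intersections. The only difference is that you spell out the set-level identity (via the neat observation that $\cD_p(f)$ is itself a support set), a point the paper's proof dismisses as holding ``by definition.''
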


\begin{proof}
Define the collection of lex-polyhedra 
\[
\lex{\Sigma} \ = \ \SetOf{\lex{P}_S}{S \text{ support set },|S|>1} \enspace .
\] 
By definition $\lex{\Sigma}$ and $\tropvariety_2(f)$ are equal as sets; it remains to show $\lex{\Sigma}$ is a lex-polyhedral complex.
By Lemma \ref{lem:support+cells}, $\lex{\Sigma}$ is closed under taking intersections and restricting to faces, therefore it is a lex-polyhedral complex.
\end{proof}

\begin{example} \label{exmp:lex+polyhedral+hypersurface}
We return to the polynomial $f = x_1 + tx_2 + t^2u$ from Example \ref{ex:rank+two+hypersurface}.
Its support is $\supp(f) = \{(0,0),(1,0),(0,1)\}$, and so $\tropvariety_2(f)$ is a lex-polyhedral complex in $(\TT_2)^2$ with three maximal lex-polyhedral cells:
\begin{align*}
\lex{P}_{\{(0,0),(1,0)\}} \ &= \
    \SetOf{\smallpmatrix{2 & 1 \\1 & 1} + \smallpmatrix{0 & \lambda_1 \\0 & \lambda_2}}
    {\smallcolvectwo{\lambda_1}{\lambda_2} \geq \smallcolvectwo{0}{0}} \\
\lex{P}_{\{(0,0),(0,1)\}} \ &= \ 
    \SetOf{\smallpmatrix{2 & 1 \\1 & 1} + \smallpmatrix{\lambda_1 & 0 \\\lambda_2 & 0}}
    {\smallcolvectwo{\lambda_1}{\lambda_2} \geq \smallcolvectwo{0}{0}} \\    
\lex{P}_{\{(1,0),(0,1)\}} \ &= \ 
    \SetOf{\smallpmatrix{2 & 1 \\1 & 1} + \smallpmatrix{-\lambda_1 & -\lambda_1 \\-\lambda_2 & -\lambda_2}}
    {\smallcolvectwo{\lambda_1}{\lambda_2} \geq \smallcolvectwo{0}{0}} \enspace .
\end{align*}
Their intersection is the common face $\lex{P}_{\{(0,0),(1,0),(0,1)\}} = \smallpmatrix{2 & 1 \\ 1 & 1}$.
\end{example}

\paragraph*{Convergent complex Hahn series.}

While Proposition \ref{prop:lex+polyhedral+hypersurface} gives a concrete description of rank two tropical hypersurfaces, the structure of lex-polyhedra is not as well understood as ordinary polyhedra. 
Here we approach these objects through convergent Hahn series.
So we consider a pair $\hahnconvergent{\RR}{t}$, $\hahnconvergent{\RR}{t,u}$ of convergent subrings of the field of Hahn series $\hahnseries{\RR}{t,u}$ in two indeterminates, $t$ and $u$, such that \eqref{eq:frac+diagram} commutes.
Writing $\hahnconvergent{\CC}{t,u} = \hahnconvergent{\RR}{t,u} + i\hahnconvergent{\RR}{t,u}$, that diagram naturally extends to the following commutative diagram of Laurent polynomial (semi-)rings.
\begin{equation}\label{eq:poly+diagram}
  \begin{tikzcd}
    \hahnconvergent{\CC}{t}[{\bm x}^{\pm}] \arrow{d}{\trop} \arrow{r}{\iota} &  \hahnconvergent{\CC}{t,u}[{\bm x}^{\pm}] \arrow{d}{\trop_2} \arrow[dashed]{r}{\pi_{u\mapsto\sigma}} & \hahnconvergent{\CC}{t}[{\bm x}^{\pm}] \arrow{d}{\trop} \\
    \TT[{\bm x}^{\pm}] \arrow{r}{\iota_*} & \TT_2[{\bm x}^{\pm}] \arrow{r}{\pi_{u*}} & \TT[{\bm x}^{\pm}]
  \end{tikzcd}
\end{equation}
Here ${\bm x}^{\pm}$ is shorthand for $x_1^\pm,\dots,x_d^\pm$.
Furthermore, $\iota, \iota_*, \pi_{u\mapsto\sigma}, \pi_{u*}$ are the same as in \eqref{eq:frac+diagram}, applied coefficientwise.
Again we also use $\pi_t$ and $\pi_{t*}$ despite the fact that the roles of $t$ and $u$ are not interchangeable in \eqref{eq:frac+diagram}; cf.\ Example~\ref{exmp:not-interchangeable}.

We say that $\sigma$ is \emph{admissible} for a polynomial $f$ if it is admissible for each of its coefficients.
In particular, we require $\sigma$ to be admissible to guarantee diagram~\eqref{eq:poly+diagram} commutes.
Note that here evaluating a series in $\hahnconvergent{\CC}{t,u}$, within its polyradius of convergence, is only defined for admissible positive \emph{real} values, despite that the coefficients are allowed to be complex numbers.
This yields a real-analytic function, which may not be holomorphic; however, see \cite{MullerStrohmaier:2014}.

\begin{example} \label{exmp:partial+eval}
  Consider the rank two bivariate polynomial $f = x_1 + tx_2 + t^2u$ in $\hahnfrac{\CC}{t,u}[x_1,x_2]$ from Example \ref{ex:rank+two+hypersurface}, where $\hahnfrac{\CC}{t,u}$ are complex Hahn fractions.
  The coefficients of $f$ converge to nonzero values for any positive evaluation.
  For instance, this gives the rank one polynomials
  \[
    \begin{array}{rcll}
      \pi_{u\mapsto 1}(f) &\!=\!& x_1 + tx_2 + t^2 & \in \hahnfrac{\CC}{t}[x_1,x_2] \quad \text{and}\\
      \pi_{t\mapsto 1}(f) &\!=\!&  x_1 + x_2 + u  & \in \hahnfrac{\CC}{u}[x_1,x_2] \enspace ,
    \end{array}
  \]
  obtained from evaluating at $u=1$ and $t=1$.
  Their rank one tropical hypersurfaces both are tropical lines in $\RR^2$.
\end{example}

\begin{example}\label{exmp:referee}
  We thank an anonymous referee for the following example.
  Consider the rank two univariate polynomial $f = t(u-u^2) + t^2 - x$ in $\hahnfrac{\CC}{t,u}[x]$.
  We see that
  \[
    \begin{array}{rcll}
      \pi_{u\mapsto\sigma}(f) &\!=\!& t(\sigma-\sigma^2) + t^2 - x & \in \hahnfrac{\CC}{t}[x] \enspace ,
    \end{array}
  \]
  is admissible for all positive $\sigma \neq 1$.
  For $\sigma = 1$, the leading constant term is killed, and so diagram~\eqref{eq:poly+diagram} does not commute for this value of $\sigma$:
  \[
    \begin{array}{rclcl}
      \trop(\pi_{u\mapsto 1}(f)) &\!=\!& \trop(t^2 - x) &\!=\!& 2 + x  \\
      \pi_{u*}(\trop_2(f)) &\!=\!& \pi_{u*}((1,1) + x)  &\!=\!& 1 + x \enspace .
    \end{array}
  \]
\end{example}

For clarity, we use $\tropvariety$ rather than $\tropvariety_2$ to denote tropical hypersurfaces where the underlying field has rank one valuation.
As $\pi_{u\mapsto\sigma}(f)$ and $\pi_{t\mapsto\rho}(f)$ are polynomials over an algebraically closed field with a rank one valuation, their tropical hypersurfaces $\tropvariety(\pi_{u\mapsto\sigma}(f))$ and $\tropvariety(\pi_{t\mapsto\rho}(f))$ are ordinary polyhedral complexes.
However, the underlying fields are different and so these tropical hypersurfaces sit in different ambient spaces that we denote by $\RR_t^d$ and $\RR_u^d$ respectively.
Using Theorem \ref{thm:rank2+kapranov} and the commutative diagram \eqref{eq:poly+diagram}, we may view the entire space
\[
  \RR^{2\times d} \ = \ \pi_{u*}(\RR^{2\times d}) \times \pi_{t*}(\RR^{2\times d}) \ = \ \RR_t^d \times \RR_u^d
\]
as their Cartesian product.

As noted previously, $\tropvariety_2(f)$ is not closed in the Euclidean topology and so is not a polyhedral complex.
However, we can still use the additional structure of $\tropvariety(\pi_{u\mapsto\sigma}(f))$ and $\tropvariety(\pi_{t\mapsto\rho}(f))$ to describe $\tropvariety_2(f)$.

The \emph{(relative) interior} of an ordinary polyhedron $P$ is the set of points $\inte(P)$ contained in $P$ but no proper face of $P$.
Equivalently, it is the set cut out by the defining equalities and inequalities of $P$, where any proper inequalities are changed to strict inequalities.
By removing its boundary, the interior of a polyhedron is not closed in the Euclidean topology, and so this is what we shall use to describe $\tropvariety_2(f)$.
Note that the interior of a polyhedron is open if and only if it is full dimensional.

Let $f=\sum\gamma_s x^s$.
For $T \subseteq \supp(f)$, we denote the restriction of $f$ to the monomials labelled by $T$ by $f_T = \sum_{s \in T} \gamma_s x^s$.
We denote the support cells of $f_T$ with support set $S$ as $P_{S,T}$, where the extra index emphasises the restriction on the support of $f$.
The following is our first main result.
The Example~\ref{exmp:referee} shows that the admissibility assumption is crucial.

\begin{theorem}\label{thm:union+polyhedra}
  Let $f \in \hahnconvergent{\CC}{t,u}[x_1^{\pm},\dots,x_d^{\pm}]$ be a $d$-variate Laurent polynomial with admissible partial evaluations $t\mapsto\rho$ and $u\mapsto\sigma$.
  The rank two tropical hypersurface $\tropvariety_2(f)$ is the finite disjoint union
  \[
    \tropvariety_2(f) \ = \ \bigsqcup_{S} \bigsqcup_{T \supseteq S} \bigl(\inte(Q_T) \times \inte(R_{S,T}) \bigr)
  \]
of interiors of ordinary polyhedra in $\RR^{2\times d}$, where $Q_T$ and $R_{S,T}$ are support cells of the rank one tropical hypersurfaces $\tropvariety(\pi_{u\mapsto\sigma}(f))$ in $\RR_t^d$ and $\tropvariety(\pi_{t\mapsto\rho}(f_T))$ in $\RR_u^d$, respectively.
\end{theorem}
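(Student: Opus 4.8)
The plan is to track the two coordinates of the lexicographic valuation separately, reading the first coordinate off from $\pi_{u\mapsto\sigma}$ and the second, after cutting down to the monomials that still matter, off from $\pi_{t\mapsto\rho}$. As a first step I would normalise by replacing $f$ with a Laurent polynomial having the same support whose coefficient of $x^s$ is the single monomial $c_s\, t^{a_s} u^{b_s}$, where $(a_s,b_s):=\val_2(\gamma_s)$: since $\trop_2(f)$, hence $\tropvariety_2(f)$ and every $\cD_p(f)$, depends only on $\supp(f)$ and on the vectors $\val_2(\gamma_s)$, this changes nothing on the left; by Proposition~\ref{prop:substitution} and Observation~\ref{obs:supp} it also leaves $\tropvariety(\pi_{u\mapsto\sigma}(f))$ unchanged; and it forces $\val(\pi_{t\mapsto\rho}(\gamma_s))=b_s$, which is exactly what the second stage needs (otherwise $\pi_{t\mapsto\rho}$ only sees the $u$-order of $\gamma_s$, which can be strictly smaller than $b_s$ — the asymmetry of Example~\ref{exmp:not-interchangeable}). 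I then use the direct-sum decomposition $\RR^{2d}=\RR_t^d\oplus\RR_u^d$ from the discussion after \eqref{eq:poly+diagram}, writing $p=\bar p+\tilde p$ with $\bar p=\pi_{u*}(p)\in\RR_t^d$ and $\tilde p=\pi_{t*}(p)\in\RR_u^d$.

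Next I would evaluate the lexicographic minimum defining $\trop_2(f)(p)$ in two stages. The first coordinates of the terms $\val_2(\gamma_s)+\langle s,p\rangle$ are $a_s+\langle s,\bar p\rangle$, so their minimum over $s\in\supp(f)$ is $\trop(\pi_{u\mapsto\sigma}(f))(\bar p)$ and is attained precisely on $T:=\cD_{\bar p}(\pi_{u\mapsto\sigma}(f))$. Every element of $\cD_p(f)$ must attain this first-coordinate minimum, so $\cD_p(f)\subseteq T$, and among $s\in T$ the lexicographic minimum is decided by the second coordinates $b_s+\langle s,\tilde p\rangle$. Because $\supp(f_T)=T$ (Observation~\ref{obs:supp}) and $\val(\pi_{t\mapsto\rho}(\gamma_s))=b_s$ for $s\in T$ after the normalisation, $\min_{s\in T}(b_s+\langle s,\tilde p\rangle)=\trop(\pi_{t\mapsto\rho}(f_T))(\tilde p)$ and is attained precisely on $\cD_{\tilde p}(\pi_{t\mapsto\rho}(f_T))$. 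Hence $\cD_p(f)=\cD_{\tilde p}(\pi_{t\mapsto\rho}(f_T))$.

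Now set $S:=\cD_p(f)$. Both $T$ and $S$ are support sets, since any set of the form $\cD_q(g)$ is. By the rank-one analogue of Remark~\ref{rem:support+cells} (cf.\ \cite[Proposition 3.1.6]{Tropical+Book}), applied to $\pi_{u\mapsto\sigma}(f)$ and to $\pi_{t\mapsto\rho}(f_T)$, the point $\bar p$ lies in the relative interior of the support cell $Q_T:=\lex{P}_T(\pi_{u\mapsto\sigma}(f))\subseteq\RR_t^d$ and $\tilde p$ in the relative interior of $R_S:=\lex{P}_S(\pi_{t\mapsto\rho}(f_T))\subseteq\RR_u^d$; as $\RR_t^d$ and $\RR_u^d$ are complementary coordinate subspaces this says exactly $p\in\inte(Q_T)+\inte(R_S)$. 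Moreover $p\in\tropvariety_2(f)$ iff $|\cD_p(f)|>1$, i.e.\ iff $|S|>1$, i.e.\ (Proposition~\ref{prop:lex+polyhedral+hypersurface}) iff $R_S$ is a cell of the hypersurface $\tropvariety(\pi_{t\mapsto\rho}(f_T))$; in that case $|T|\ge|S|>1$, so $Q_T$ is a cell of $\tropvariety(\pi_{u\mapsto\sigma}(f))$ and $T\supseteq S$. Conversely, if $|S|>1$ then any $p$ with $\bar p\in\inte(Q_T)$, $\tilde p\in\inte(R_S)$ re-runs the two-stage computation to give $\cD_p(f)=S$, so $p\in\tropvariety_2(f)$. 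This establishes the set equality. Disjointness follows because $T=\cD_{\bar p}(\pi_{u\mapsto\sigma}(f))$, and then $f_T$ and $S=\cD_{\tilde p}(\pi_{t\mapsto\rho}(f_T))$, are recovered from $p$, while relative interiors of distinct support cells of a fixed subdivision are disjoint; finiteness is clear because $\supp(f)$ is finite, bounding the number of $T$ and, for each, the number of $S\subseteq T$.

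The step I expect to be the main obstacle is the second stage. One must make sure that, after passing to $T$, the second coordinate of $\trop_2$ is genuinely governed by $\tropvariety(\pi_{t\mapsto\rho}(f_T))$; this is where the non-interchangeability of $t$ and $u$ (Example~\ref{exmp:not-interchangeable}) must be handled, and the monomial normalisation — equivalently, replacing each $\gamma_s$ by its $t$-leading coefficient — is what repairs it. The accompanying bookkeeping also needs care: that $\cD_p(f)$, cut out of $\supp(f)$, coincides with $\cD_{\tilde p}$ cut out of the smaller set $T$, and that both are support sets so Remark~\ref{rem:support+cells} applies. Everything else — the two-stage reading of a lexicographic minimum and the identification of $\inte(Q_T)+\inte(R_S)$ with the product region in $\RR_t^d\oplus\RR_u^d$ — is routine.
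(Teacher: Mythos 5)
Your overall strategy is the same as the paper's: split the lexicographic minimum coordinatewise, read the first coordinates off $\pi_{u\mapsto\sigma}(f)$ to obtain $T=\cD_{\bar p}(\pi_{u\mapsto\sigma}(f))$, break the tie among the monomials of $T$ using the second coordinates to obtain $S=\cD_p(f)$, and identify the locus with $\inte(Q_T)+\inte(R_S)$ inside $\RR^{2d}=\RR_t^d\oplus\RR_u^d$; the paper organises this through $\inte(\lex{P}_S)$ and the pair of conditions \eqref{eq:tcond}--\eqref{eq:ucond}, but the computation is the same, and your bookkeeping (that $\cD_p(f)$ and $T$, $S$ are support sets, disjointness, finiteness) matches. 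The genuine difference is your preliminary normalisation, and that is exactly where the gap sits: replacing each $\gamma_s$ by the monomial $c_s t^{a_s}u^{b_s}$ indeed leaves the left-hand side and $\tropvariety(\pi_{u\mapsto\sigma}(f))$ untouched, but it changes $\pi_{t\mapsto\rho}(f_T)$ and hence the very cells $R_S$ that the statement of Theorem~\ref{thm:union+polyhedra} names. Concretely, take $d=1$ and $f=(tu^3+t^2u)\,x+t$: here $\val_2(tu^3+t^2u)=(1,3)$, so the rank two hypersurface is the single point $(0,-3)$, and your normalised second-stage polynomial $u^3x+1$ reproduces this; but the literal $\pi_{t\mapsto\rho}(f_T)=(\rho u^3+\rho^2u)x+\rho$ has its tropical hypersurface at $-1$, so the displayed right-hand side, taken verbatim, produces $(0,-1)$. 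Thus your argument proves the variant of the theorem in which $R_S$ is built from the $t$-leading coefficients of the $\gamma_s$ (equivalently from the second components of $\val_2(\gamma_s)$), and the claim that the normalisation \enquote{changes nothing} relevant to the statement is the step that fails.

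That said, you have located precisely the delicate point of the published argument: in \eqref{eq:ucond} the paper silently identifies $\pi_{t*}(\val_2(\gamma_s))$ with $\val(\pi_{t\mapsto\rho}(\gamma_s))$, and Example~\ref{exmp:not-interchangeable} (or the $f$ above) shows these can differ. The two second-stage polynomials, and hence your proof and the paper's statement, agree exactly when every coefficient attains its $u$-order on its $t$-leading term --- for instance for coefficients of the form $\gamma u^c$ with $\gamma\in\puiseux{\CC}{t^*}$, which covers all the paper's examples and the uses in Corollary~\ref{cor:KK+hypersurface+structure} and Theorem~\ref{thm:stable+intersection} --- and in that regime your normalisation is vacuous and your argument coincides with the paper's proof. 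To prove the theorem as printed you would need either such a hypothesis on the coefficients or an argument that the discrepancy cannot affect the cells $R_S$ for $s\in T$; the latter is not true in general. So: right strategy, correctly identified obstacle, but the fix as written replaces the statement rather than establishing it --- you should state the result with the $t$-initial coefficients (or add the hypothesis under which the two coincide) instead of asserting the reduction is harmless.
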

\begin{proof}
By Proposition \ref{prop:lex+polyhedral+hypersurface}, $\tropvariety_2(f)$ is a lex-polyhedral complex of support cells $\lex{P}_S$ as $S$ runs over all support sets of $f$ of cardinality greater than one.
In particular, this becomes a disjoint union if we restrict to the relative interiors of $\lex{P}_S$; by Remark \ref{rem:support+cells} these are the points $p$ such that $\trop_2(f)(p)$ attains its minimum at precisely the monomials labelled by $S$.
We claim that $\inte(\lex{P}_S) = \bigsqcup_{T \supseteq S} \left(\inte(Q_T) \times \inte(R_{S,T})\right)$.

The point $p$ is contained in $\inte(\lex{P}_S)$ if and only if $\trop_2(f)(p)$ attains its minimum at precisely the monomials labelled by $S$ i.e.,
\begin{equation}\label{eq:S+tight}
\val_2(\gamma_s) + \langle s , p \rangle \ \leq \ \val_2(\gamma_{s'}) + \langle s' , p \rangle \ , \ \text{for all } s \in S \text{ and } s' \in \supp(f)
\end{equation}
with equality if and only if $s' \in S$.
Taking into consideration the lexicographical ordering on $\TT_2$, we can consider its coordinates separately to derive conditions on $\pi_{t*}(p)$ and $\pi_{u*}(p)$.

Consider condition~\eqref{eq:S+tight} restricted to the first coordinate.
Due to the lexicographical ordering on $\TT_2$, equality is attained in the first coordinate for some superset $T \supseteq S$, where
\[
  \begin{aligned}
    T \ &= \ \argmin_{s \in \supp(f)}\left( \pi_{u*}(\val_2(\gamma_s)) + \pi_{u*}(\langle s , p \rangle) \right) \\
    &= \ \argmin_{s \in \supp(f)}\left(\val(\pi_{u\mapsto\sigma}(\gamma_s)) + \sum_{i=1}^d s_i p_{1i}\right) \enspace .
  \end{aligned}
\]
This labels the precise set of monomials that $\trop(\pi_{u\mapsto\sigma}(f))(\pi_{u*}(p))$ attains its minimum at.
Therefore we can deduce that $\pi_{u*}(p)$ is contained in the interior of the support cell $Q_T$ of $\tropvariety(\pi_{u\mapsto\sigma}(f))$.

For condition~\eqref{eq:S+tight} to hold, the restriction of~\eqref{eq:S+tight} to the second coordinate to be a strict inequality for all $s \in S$ and $s' \in T \setminus S$, and an equality for all $s,s' \in S$.
This is equivalent to
\[
  \begin{aligned}
    S \ &= \ \argmin_{s \in T}\left( \pi_{t*}(\val_2(\gamma_s)) + \pi_{t*}(\langle s , p \rangle) \right) \\
    &= \ \argmin_{s \in T}\left(\val(\pi_{t\mapsto\rho}(\gamma_s)) + \sum_{i=1}^d s_i p_{2i}\right) \enspace .
\end{aligned}
\]
This labels the precise set of monomials that $\trop(\pi_{t\mapsto\rho}(f_T))(\pi_{t*}(p))$ attains its minimum at.
Therefore we can deduce that $\pi_{t*}(p)$ is contained in the interior of the support cell $R_{S,T}$ of $\tropvariety(\pi_{t\mapsto\rho}(f_T))$.

It remains to show each part of the disjoint union is the interior of a polyhedron, or explicitly that $\inte(Q_T \times R_{S,T}) = \inte(Q_T) \times \inte(R_{S,T})$.
As $Q_T$ and $R_{S,T}$ are in orthogonal ambient spaces, the union of their defining equalities and inequalities cut out $Q_T \times R_{S,T}$.
Changing the inequalities to strict inequalities gives the desired result.
\end{proof}

Since the order topology is finer than the Euclidean topology, the Euclidean closure becomes larger.
\begin{corollary} \label{cor:hypersurface+closure}
  With the notation of Theorem~\ref{thm:union+polyhedra}: the closure of $\tropvariety_2(f)$ in the Euclidean topology is the finite union
  \[
    \overline{\tropvariety_2(f)} \ = \ \bigcup_S \bigcup_{T \supseteq S} \bigl(Q_T \times R_{S,T} \bigr)
  \]
  of polyhedra in $\RR^{2\times d}$.
\end{corollary}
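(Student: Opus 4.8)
The plan is to obtain the corollary as a formal consequence of Theorem~\ref{thm:union+polyhedra} by passing to Euclidean closures. First I would invoke the theorem to write $\tropvariety_2(f) = \bigsqcup_S \bigsqcup_{T \supseteq S} \bigl(\inte(Q_T) + \inte(R_S)\bigr)$. Since $f$ has only finitely many support sets, the index set here is finite, and closure commutes with finite unions in any topological space; hence $\overline{\tropvariety_2(f)} = \bigcup_S \bigcup_{T \supseteq S} \overline{\inte(Q_T) + \inte(R_S)}$. It then suffices to prove, for each relevant pair $(S,T)$, that $\overline{\inte(Q_T) + \inte(R_S)} = Q_T + R_S$ in the Euclidean topology on $\RR^{2d}$, and to substitute this into the displayed union.

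For the identity I would argue both inclusions. Recall that $Q_T$ is an ordinary polyhedron in $\RR_t^d$ and $R_S$ an ordinary polyhedron in $\RR_u^d$, and that $\RR^{2d} = \RR_t^d + \RR_u^d$ is the direct sum of these two orthogonal coordinate subspaces, so that $Q_T + R_S$ is literally the Cartesian product $Q_T \times R_S$ and is in particular closed. Since $\inte(Q_T) + \inte(R_S) \subseteq Q_T + R_S$, taking closures gives $\overline{\inte(Q_T) + \inte(R_S)} \subseteq Q_T + R_S$. Conversely, the relative interior of a nonempty polyhedron is Euclidean-dense in it (and relative interior and polyhedron are empty together, in which case there is nothing to prove), so $\overline{\inte(Q_T)} = Q_T$ and $\overline{\inte(R_S)} = R_S$; from the elementary fact that $x_k \to x$ and $y_k \to y$ imply $x_k + y_k \to x + y$, we get $Q_T + R_S = \overline{\inte(Q_T)} + \overline{\inte(R_S)} \subseteq \overline{\inte(Q_T) + \inte(R_S)}$. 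Combining the two inclusions proves the identity.

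I do not anticipate a serious obstacle: the only point that deserves a word of care is the passage $\overline{\inte(Q_T) + \inte(R_S)} = \overline{\inte(Q_T)} + \overline{\inte(R_S)}$, where the inclusion $\supseteq$ is the nontrivial direction; it holds here precisely because the two summands live in orthogonal subspaces, so the Minkowski sum is a genuine product and density of each factor in its polyhedron transfers to the product. One may also note in passing that the union in the conclusion need no longer be disjoint, since distinct relative interiors may have overlapping closures, which is why the statement uses $\bigcup$ rather than the $\bigsqcup$ of Theorem~\ref{thm:union+polyhedra}; this observation requires no further argument.
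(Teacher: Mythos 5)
Your proposal is correct and takes essentially the same route as the paper's proof: apply Theorem~\ref{thm:union+polyhedra}, use that closure commutes with finite unions, and invoke the identity $Q_T + R_S = \overline{\inte(Q_T) + \inte(R_S)}$, which the paper states without elaboration and you justify carefully via density of relative interiors and the orthogonality of $\RR_t^d$ and $\RR_u^d$. No gaps; your extra detail on that identity and on the union no longer being disjoint is a faithful expansion of the published argument.
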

\begin{proof}
  As $Q_T \times R_{S,T} = \overline{\inte(Q_T) \times \inte(R_{S,T})}$, the result follows from Theorem \ref{thm:union+polyhedra} using the fact that the closure of a finite union of sets equals the union of their closures.
\end{proof}

\begin{remark} \label{rem:hypersurface+characterisation}
Building on Theorem \ref{thm:union+polyhedra} and Corollary \ref{cor:hypersurface+closure}, one can give a slightly different characterisation of $\tropvariety_2(f)$ and its closure.
Letting $T$ range over support sets of $\pi_{u\mapsto\sigma}(f)$ and $S$ over support sets of $\pi_{t\mapsto\rho}(f_T)$, we get
\begin{align*}
\tropvariety_2(f) \ &= \ \bigsqcup_{S} \bigsqcup_{T \supseteq S} \bigl(\inte(Q_T) \times \inte(R_{S,T}) \bigr) \\
					&= \ \bigsqcup_{T} \bigl(\inte(Q_T) \times \bigsqcup_{S \subseteq T} \inte(R_{S,T})\bigr) \\
					&= \ \bigsqcup_{T} \bigl(\inte(Q_T) \times \tropvariety(\pi_{t\mapsto\rho}(f_T)) \bigr) \enspace .
\end{align*}
Taking the closure in the Euclidean topology gives the expression $\overline{\tropvariety_2(f)} = \bigcup_{T} \bigl(Q_T \times \tropvariety(\pi_{t\mapsto\rho}(f_T)) \bigr)$.
These alternative characterisations will be of use for Section \ref{sec:stable}.
\end{remark}

To close this section, we give two examples to demonstrate that rank two tropical hypersurfaces are quite different from their rank one counterparts, even when taking their closure in the Euclidean topology.
Example \ref{ex:cell+complex} demonstrates the closure of a rank two tropical hypersurface is not a polyhedral complex, as polyhedra may not intersect at their faces.
Example \ref{ex:differing+max+dim} shows the closure of a rank two tropical hypersurface does not satisfy a purity condition, as the polyhedra that are maximal with respect to inclusion may not be of the same dimension.

\begin{example} \label{ex:cell+complex}
We return to the rank two tropical hypersurface of the polynomial $f = x_1 + tx_2 + t^2u$ from Examples \ref{ex:rank+two+hypersurface}, \ref{exmp:lex+polyhedral+hypersurface} and \ref{exmp:partial+eval}.
As its coefficients are monomials in $t$ and $u$, the partial evaluations of $f$ are defined at the admissible values $\rho = \sigma = 1$.
Let $T = \{(0,0),(0,1)\}$, and consider the support cell
\[
Q_T \ = \ \SetOf{(2 + \lambda_1,1)}{\lambda_1 \geq 0}
\]
of the tropical line $\tropvariety(\pi_{u\mapsto 1}(f))$ in $\RR_t^2$.
The polynomial $\pi_{t\mapsto 1}(f_T) = x_2 + u$ defines a rank 1 tropical hypersurface with a single support cell
\[
R_{S,T} \ = \ \SetOf{(\lambda_2,1)}{\lambda_2 \in \RR} \enspace ,
\]
in $\RR_u^2$, where $S = \{(0,0),(0,1)\}$.
By Corollary \ref{cor:hypersurface+closure}, the product of these two polyhedra
\[
Q_T \times R_{S,T} \ = \ \SetOf{\smallpmatrix{2 & 1 \\1 & 1} + \smallpmatrix{\lambda_1 & 0 \\\lambda_2 & 0}}{\lambda_1 \geq 0, \lambda_2 \in \RR} \subset \RR^{2\times 2}
\]
is a polyhedron in $\overline{\tropvariety_2(f)}$.
Ranging over all support sets $S$ and $T$, the closure of $\tropvariety_2(f)$ in the Euclidean topology is the union
\[
  \begin{split}
    \overline{\tropvariety_2(f)}
    \ = \ &\SetOf{\smallpmatrix{2 & 1 \\1 & 1} + \smallpmatrix{\lambda_1 & 0 \\\lambda_2 & 0}}{\lambda_1 \geq 0, \lambda_2 \in \RR} \, \cup \, \SetOf{\smallpmatrix{2 & 1 \\1 & 1} + \smallpmatrix{0 & \lambda_1 \\0 & \lambda_2}}{\lambda_1 \geq 0, \lambda_2 \in \RR} \\
    & \cup \, \SetOf{\smallpmatrix{2 & 1 \\1 & 1} + \smallpmatrix{\lambda_1 & \lambda_1 \\\lambda_2 & \lambda_2}}{\lambda_1 \leq 0, \lambda_2 \in \RR}
  \end{split}
\]
of three ordinary halfplanes in $\RR^{2\times 2}$.
Note that this is not an ordinary polyhedral complex as the polyhedra do not intersect at faces.
The joint intersection of the three ordinary halfplanes is the point $\smallpmatrix{2 & 1 \\ 1 & 1}$, but this is not a (zero-dimensional) face of any of them.
\end{example}

\begin{example} \label{ex:differing+max+dim}
  Consider the polynomial $f = ux_1x_2 + x_1 + x_2 + 1$, whose vanishing locus is a conic.
  The closure of its rank two tropical hypersurface is the union of ordinary polyhedra:
  \[
    \begin{split}
      \overline{\tropvariety_2(f)}  \ = \ & \SetOf{\smallpmatrix{ \lambda_1 & 0 \\ \lambda_2 & 0 }}{\lambda_1 \geq 0, \lambda_2 \in \RR} \, \cup \, \SetOf{\smallpmatrix{ 0 & \lambda_1 \\ 0 & \lambda_2 }}{\lambda_1 \geq 0, \lambda_2 \in \RR} \\
      & \cup \, \SetOf{\smallpmatrix{ 0 & 0 \\ \lambda & \lambda }}{\lambda \in [-1,0]} \, \cup \, \SetOf{\smallpmatrix{ \lambda_1 & 0 \\ \lambda_2 & -1 }}{\lambda_1 \leq 0, \lambda_2 \in \RR} \\
      & \cup \, \SetOf{\smallpmatrix{ 0 & \lambda_1 \\ -1 & \lambda_2 }}{\lambda_1 \leq 0, \lambda_2 \in \RR} \enspace .
    \end{split}
  \]
  We say a finite union of polyhedra is \emph{pure} if all its maximal polyhedra (with respect to inclusion) have the same dimension.
  This generalises a notion commonly used for polyhedral complexes; in fact, it is the same if applied to the polyhedral complex obtained by taking the common refinement of the finitely many given polyhedra.
  Observe that $\overline{\tropvariety_2(f)}$ is not pure, as the maximal polyhedra are all two-dimensional, except for the line segment 
  \[
  \SetOf{\smallpmatrix{ 0 & 0 \\ \lambda & \lambda }}{\lambda \in [-1,0]} \enspace .
  \]
  This can be decomposed as the product of support cells
  \[
  Q_T \times R_{S,T} \ = \ \{(0,0)\} \times \SetOf{(\lambda,\lambda)}{\lambda \in \RR}
  \]
  where $T = \{(0,0),(1,0),(0,1),(1,1)\}$ and $S = \{(1,0),(0,1)\}$.
  In particular, $S \subset T$ implies $\dim(Q_T) < \dim(R_{S,T})$.
  However, the pairs of support cells in the decomposition of the other maximal polyhedra have equal support sets, and therefore the same dimension.
\end{example}

\paragraph*{Non-surjective valuations.}

Throughout we have insisted the valuation map $\val_2:\LL \rightarrow \TT_2$ is a surjective valuation.
For rank one valuations, such assumptions are not required, furthermore there is existing work that does not rely on these assumptions for higher rank valuations.
We close this section by comparing our approach to existing literature, and discussing the issues that can arise when not using surjective higher rank valuations.

Foster and Ranganathan \cite{FosterRanganathan:2016hahn} and Banerjee \cite{Banerjee:2015} both study notions of higher rank tropical geometry; in both cases the group of values is $\TT_m$ (or a discrete subgroup).
Banerjee considers the tropicalisation of subvarieties of the torus over $m$-dimensional local fields with discrete valuation, while Foster and Ranganathan consider a generalisation of Berkovich analytification.
As we shall see, Banerjee's tropicalisation is via valuations that do not surject onto $\TT_m$ and is therefore not comparable to ours.
However, both are special cases of the tropicalisation in \cite{FosterRanganathan:2016hahn}.
In particular, for $m=2$ our $\tropvariety_2(f)$ from Definition~\ref{def:hypersurface} is covered in \cite{FosterRanganathan:2016hahn}.

There is a conceptual difference between the approach of Foster and Ranganathan and Banerjee's approach.
Banerjee begins with small fields and discrete valuations and then takes algebraic and topological closures to ``fill in gaps'', while Foster and Ranganathan begin with larger fields, via Hahn analytification, to avoid taking topological closures.
Our approach is in the same spirit as Foster and Ranganathan's.
While either approach behaves well for $m=1$, the following shows that topological closure operations go awry when $m>1$ and thus need to be dealt with carefully.

To see this, first let us very briefly describe the setup of \cite{Banerjee:2015}.
Any \emph{$m$-dimensional local field} $\KK$, in the sense of \cite[Definition 3.1]{Banerjee:2015}, admits a valuation $\nu^{\KK} : \KK^{\times} \rightarrow \Gamma^{\KK}$ where $\Gamma^{\KK} \cong \ZZ^m$ with the lexicographical ordering.
For any finite field extension $\LL$ of~$\KK$, this valuation extends to a valuation $\nu^{\LL} : \LL^{\times} \rightarrow \Gamma^{\LL}$.
This allows us to extend $\nu^{\KK}$ to the algebraic closure of $\KK$, becoming the surjective map $\nu :(\KK^{\rm al})^{\times} \rightarrow \Gamma_{\QQ} \cong \QQ^m$ where $\Gamma_{\QQ}$ is the direct limit of all groups $\Gamma^{\LL}$ taken over all finite field extensions $\LL$ of $\KK$.
Finally, we let $\Gamma_{\RR} := \Gamma_{\QQ} \otimes_{\QQ} \RR \cong \RR^m$ and extend the codomain of $\nu$ to $\Gamma_{\RR}$.
One then considers subvarieties of the $d$-dimensional algebraic torus over $\KK$ and their images in $\nu$.

Banerjee's notion of a tropical hypersurface is the same as Aroca's \cite{Aroca:2010b}, and this agrees with Definition~\ref{def:hypersurface}.
Now \cite[Theorem 5.3]{Banerjee:2015} claims that $\tropvariety_m(f)$ is equal to
\[ \overline{\SetOf{\nu(p)}{p \in \mathcal{X}_f}} \enspace ,\]
where $\mathcal{X}_f$ is the hypersurface in the algebraic torus defined by $f$.
Unfortunately, in which topology the closure is taken in is not specified.
The discussion in \cite[Section 2.3]{FosterRanganathan:2016hahn} erroneously assumes it is the Euclidean topology.
However, the resulting set contains $\tropvariety_m(f)$ but is too large and contains points where $\trop_m(f)$ is linear.
Note that Banerjee's definition of a polyhedron \cite[Notation 4.1.(v)]{Banerjee:2015} generalises our definition of a lex-polyhedron slightly by replacing $\ZZ^m$ by any totally order group $\Gamma$.
Furthermore, \cite[Example 5.11]{Banerjee:2015} is a computation of a rank two tropical hypersurface, similar to our Example~\ref{ex:rank+two+hypersurface}, and is not closed in the Euclidean topology.

However, it is worth noting that taking the order topology does not fix the claim made in \cite[Theorem 5.3]{Banerjee:2015}.
The image of the valuation $\nu$ is isomorphic to $\QQ^m$ with the lexicographical ordering.
In the order topology, $\QQ^m$ is not dense in $\RR^m$, as its closure does not contain any elements of the form $(a_1,\dots,a_m)$ where $a_1$ is irrational.
Therefore the closure in the order topology is contained in $\tropvariety_m(f)$ but is too small.

\section{Stable intersection}
\label{sec:stable}
\noindent
In this section, we use the higher rank machinery developed so far to obtain a new description of the stable intersection of rank one tropical hypersurfaces.
To do so, we must first consider the structure of rank two tropical hypersurfaces determined by polynomials with coefficients in $\hahnconvergent{\CC}{t}$, a convergent subring of $\hahnseries{\CC}{t}$.

We recall the following polyhedral definition.
Fix some polyhedral complex $\Sigma$ and let $P$ be a cell in $\Sigma$.
The \emph{star} of $P$ is the fan spanned by the cells of $\Sigma$ containing $P$; more precisely,
\begin{equation} \label{eq:star}
  \sta(P) \ = \ \bigcup_{Q\in\Sigma, \, Q\supseteq P}\SetOf{\lambda(q-p)}{\lambda \geq 0, \, p \in P, \, q \in Q} \enspace .
\end{equation}

Let $f$ be a Laurent polynomial in $\hahnconvergent{\CC}{t}[x_1^\pm,\dots,x_d^\pm]$.
Under the embedding $\iota$, we can also consider $f$ as a polynomial in $\hahnconvergent{\CC}{t,u}[x_1^\pm,\dots,x_d^\pm]$ with an associated rank two tropical hypersurface.
We arrive at another consequence of Theorem~\ref{thm:union+polyhedra}.

\begin{corollary}\label{cor:KK+hypersurface+structure}
  Let $f \in \hahnconvergent{\CC}{t}[x_1^{\pm},\dots,x_d^{\pm}]$ be a $d$-variate Laurent polynomial.
  The rank two tropical hypersurface $\tropvariety_2(f)$ is the disjoint finite union
  \[
    \tropvariety_2(f) \ = \ \bigsqcup_S \bigl( \inte(P_S) \times\sta(P_S) \bigr)
  \]
 in $\RR^{2\times d}$, where $P_S$ is a support cell of $\tropvariety(f)$ in $\RR_t^d$ and $\sta(P_S)$ is embedded in $\RR_u^d$.
 \end{corollary}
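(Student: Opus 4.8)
The plan is to specialise Theorem~\ref{thm:union+polyhedra} to the case where $f$ has coefficients in $\puiseux{\CC}{t^*}$, viewed inside $\puiseux{\CC}{(t,u)^*}$ via $\iota$. The key observation is that for such an $f$, the coefficients $\gamma_s$ do not involve the indeterminate $u$ at all, so the $u$-dependent data in Theorem~\ref{thm:union+polyhedra} simplifies dramatically. Concretely, for any admissible $\sigma$ the partial evaluation $\pi_{u\mapsto\sigma}(\gamma_s)=\gamma_s$, so $\pi_{u\mapsto\sigma}(f)=f$ and the rank one hypersurface $\tropvariety(\pi_{u\mapsto\sigma}(f))=\tropvariety(f)$ in $\RR_t^d$; thus its support cells $Q_T$ are exactly the support cells $P_T$ of $\tropvariety(f)$. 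On the other side, for any admissible $\rho$, $\pi_{t\mapsto\rho}(\gamma_s)=\gamma_s(\rho)$ is a \emph{nonzero complex constant} (the coefficient has trivial $u$-part), so $\val(\pi_{t\mapsto\rho}(\gamma_s))=0$ for all $s$. Hence in condition~\eqref{eq:ucond} the valuation terms vanish, and $\tropvariety(\pi_{t\mapsto\rho}(f_T))$ becomes the tropical hypersurface of a polynomial with all coefficients of valuation zero supported on $T$ — which is precisely the \emph{normal fan data} picking out the cells of $\tropvariety(f)$ that contain $P_T$.

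First I would set up this translation carefully: given a support set $S$ of $f$ with $|S|>1$, I want to identify $\bigsqcup_{T\supseteq S}\inte(R_S)$, where $R_S$ ranges over support cells of $\tropvariety(\pi_{t\mapsto\rho}(f_T))$, with $\inte(\sta(P_S))$. The point is that a polynomial supported on $T$ with all coefficients of valuation $0$ has tropical hypersurface consisting of cones; the support cell $R_S$ (for $S\subseteq T$) is the cone of directions $w$ along which the terms in $S$ remain jointly minimal among the terms of $f_T$ at $0$. Combining over all $T\supseteq S$, these directions $w$ are exactly the directions in which $S\subseteq\cD_{p+\epsilon w}(f)$ persists for $p\in\inte(P_S)$ and small $\epsilon>0$ — i.e.\ the relative interior of $\sta(P_S)$ as defined in~\eqref{eq:star}. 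I would make this precise by the standard fact that $\sta(P_S)$ is the normal-fan-type object: its cells are the support cells of the initial form $\mathrm{in}_{P_S}(\trop f)$, and the $T\supseteq S$ range over exactly the support sets of that initial form.

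Then I would reorganise the double disjoint union in Theorem~\ref{thm:union+polyhedra}. With $Q_T=P_T$ and the identification above, $\tropvariety_2(f)=\bigsqcup_S\bigsqcup_{T\supseteq S}(\inte(P_T)+\inte(R_S))$. Using the rearrangement trick from Remark~\ref{rem:hypersurface+characterisation} — but summing over $S$ on the outside rather than $T$ — I would collapse $\bigsqcup_{T\supseteq S}\inte(R_S)$ into a single piece $\inte(\sta(P_S))$, and note $\inte(P_S)+\inte(\sta(P_S))$ lands in $\RR_t^d+\RR_u^d=\RR^{2d}$ as desired, with $P_S$ in $\RR_t^d$ and the star embedded in $\RR_u^d$. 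The disjointness over $S$ is inherited from the disjointness of $\{\inte(P_S)\}$ in $\tropvariety(f)$; one checks that $\inte(P_S)+\inte(\sta(P_S))$ for distinct $S$ remain disjoint because the $\RR_t^d$-components already separate them.

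The main obstacle I anticipate is the bookkeeping identification $\bigsqcup_{T\supseteq S}\inte(R_S)=\inte(\sta(P_S))$ — i.e.\ verifying that the cells of the star $\sta(P_S)$ are exactly indexed by the support sets $T\supseteq S$ of $\tropvariety(f)$, and that the relative interiors match up. This requires knowing that for $f\in\puiseux{\CC}{t^*}[\bm x^\pm]$, $\tropvariety(\pi_{t\mapsto\rho}(f_T))$ equals (a translate/cone version of) $\mathrm{in}_{P_S}(\tropvariety(f))\cap\{\text{terms in }T\}$, which is essentially the statement that the star of a cell in a tropical hypersurface is the tropical hypersurface of the corresponding initial form — a standard but slightly technical fact (cf.\ \cite[\S3.4]{Tropical+Book}) that must be phrased correctly in terms of support cells. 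Everything else is a direct substitution into Theorem~\ref{thm:union+polyhedra} together with the observation that valuation-zero coefficients make the $u$-side purely combinatorial.
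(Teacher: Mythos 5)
Your opening simplifications are exactly right and match the paper's starting point: since the coefficients lie in $\puiseux{\CC}{t^*}$ we have $\pi_{u\mapsto\sigma}(f)=f$, so the cells $Q_T$ of Theorem~\ref{thm:union+polyhedra} are the support cells $P_T$ of $\tropvariety(f)$, and $\pi_{t\mapsto\rho}(\gamma_s)$ is a nonzero constant, so each $\tropvariety(\pi_{t\mapsto\rho}(f_T))$ is a fan; identifying that fan with the star is indeed the key step (the paper does it via the recession fan, \cite[Theorem 3.5.6]{Tropical+Book}). But your assembly of these pieces has a genuine error. In Theorem~\ref{thm:union+polyhedra} the $\RR_t^d$-component of the piece indexed by $(S,T)$ is $\inte(Q_T)$ with the \emph{larger} set $T$, and $R_S$ depends on $T$; so fixing $S$ and unioning over $T\supseteq S$ gives $\inte(\lex{P}_S)=\bigsqcup_{T\supseteq S}\bigl(\inte(P_T)+\inte(R_S)\bigr)$, which is \emph{not} of the form $\inte(P_S)+X$ for any fixed $X\subseteq\RR_u^d$ -- the $t$-components run over the relative interiors of all faces of $P_S$, not just $\inte(P_S)$. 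Your claimed identification is also reversed: by Lemma~\ref{lem:support+cells} (and as used in the proof of Corollary~\ref{cor:KK+hypersurface+closure}) the cells of $\sta(P_S)$ are labelled by support sets $T\subseteq S$, i.e.\ by cells $P_T\supseteq P_S$, not by $T\supseteq S$; the support sets of the initial form $f_S$ are subsets of $S$. Finally, the corollary requires the \emph{closed} star, not $\inte(\sta(P_S))$. Concretely, for $f=x_1+tx_2+t^2$ the point $(2,0;1,0)$ lies in $\tropvariety_2(f)$ (all three terms tie), but under any reasonable reading of $\inte(\sta(P_S))$ it lies in no piece $\inte(P_S)+\inte(\sta(P_S))$: its $t$-projection is the vertex, and its $u$-projection is the apex of the star fan, which your formula discards. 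So the set you produce is strictly smaller than $\tropvariety_2(f)$.

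The repair is to group by the other index, exactly as in Remark~\ref{rem:hypersurface+characterisation}, which is the paper's proof: write $\tropvariety_2(f)=\bigsqcup_{T}\bigl(\inte(Q_T)+\bigsqcup_{S\subseteq T}\inte(R_S)\bigr)=\bigsqcup_{T}\bigl(\inte(P_T)+\tropvariety(\pi_{t\mapsto\rho}(f_T))\bigr)$, where the inner union over $S\subseteq T$ reassembles the \emph{entire} rank one hypersurface (a closed set). Since $\pi_{t\mapsto\rho}(f_T)$ has constant coefficients, this hypersurface is a fan, namely the recession fan of $\tropvariety(f_T)$, which equals $\sta(P_T)$; renaming $T$ as $S$ gives the statement, and disjointness follows because the $\inte(P_T)$ are pairwise disjoint. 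In short: your first-paragraph reductions and the ``star equals hypersurface of the initial form'' idea are correct, but the regrouping step that pairs $\inte(P_S)$ with a union over $T\supseteq S$ is invalid and must be replaced by the grouping over the rank one support sets with the closed star.
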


\begin{proof}
  Clearly, this is a special case of Remark \ref{rem:hypersurface+characterisation} where $f$ agrees with $\pi_{u\mapsto\sigma}(f)$.
  We infer that $\tropvariety_2(f)$ is the disjoint union $\inte(P_S) \times \tropvariety(\pi_{t\mapsto\rho}(f_S))$.
  Since $\pi_{t\mapsto\rho}(f_S)$ has constant coefficients its tropical hypersurface is a fan.
  By \cite[Theorem 3.5.6]{Tropical+Book} this is the \emph{recession fan} of $\tropvariety(f_S)$, and in this case it agrees with $\sta(P_S)$.
\end{proof}

While $\tropvariety_2(f)$ is naturally endowed with the order topology, the power of the Euclidean topology is that $\overline{\tropvariety_2(f)}$ has a far cleaner structure.
This will be crucial for our main result of this section, Theorem~\ref{thm:stable+intersection}.

\begin{corollary}\label{cor:KK+hypersurface+closure}
The closure of $\tropvariety_2(f)$ in the Euclidean topology is the finite union
\[
\overline{\tropvariety_2(f)} \ = \ \bigcup_S \bigl( P_S \times L_S \bigr)
\]
of polyhedra in $\RR^{2\times d}$, where $P_S$ is a maximal support cell of $\tropvariety(\pi_{u\mapsto\sigma}(f))$ in $\RR_t^d$ and $L_S$ is the linear space equal to the affine span of $P_S$ translated to the origin in $\RR_u^d$.
\end{corollary}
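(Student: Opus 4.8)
The plan is to take the Euclidean closure in the disjoint-union description of $\tropvariety_2(f)$ furnished by Corollary~\ref{cor:KK+hypersurface+structure}, and then to collapse the resulting union onto the maximal cells. First I would use that taking closures commutes with finite unions, so that $\overline{\tropvariety_2(f)}=\bigcup_S\overline{\inte(P_S)+\sta(P_S)}$. Under the orthogonal splitting $\RR^{2d}=\RR_t^d\times\RR_u^d$ the set $\inte(P_S)+\sta(P_S)$ is the Cartesian product $\inte(P_S)\times\sta(P_S)$, so its Euclidean closure is $\overline{\inte(P_S)}\times\overline{\sta(P_S)}=P_S\times\sta(P_S)=P_S+\sta(P_S)$, using that the relative interior of a polyhedron is dense in it and that the star $\sta(P_S)$, being a finite union of polyhedral cones, is closed. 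This already gives
\[
  \overline{\tropvariety_2(f)} \ = \ \bigcup_S\bigl(P_S+\sta(P_S)\bigr) \enspace ,
\]
where $S$ runs over all support sets of $f$ of cardinality greater than one.

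Next I would record the elementary polyhedral fact that for any polyhedron $P$ one has $\{\lambda(q-p):\lambda\geq 0,\ p,q\in P\}=L_P$, the linear space parallel to $\aff(P)$: the inclusion ``$\subseteq$'' is clear, and ``$\supseteq$'' follows by writing an element of $L_P$ as a combination of points of $P$ with coefficients summing to zero and separating it by sign. Applying this when $P_S$ is maximal among the cells of $\tropvariety(f)$ — so that by Lemma~\ref{lem:support+cells} the only cell of $\tropvariety(f)$ containing $P_S$ is $P_S$ itself — the defining union \eqref{eq:star} collapses to $\sta(P_S)=\{\lambda(q-p):\lambda\geq 0,\ p,q\in P_S\}=L_S$, embedded in $\RR_u^d$. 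In particular each $P_S+L_S$ with $P_S$ maximal occurs among the summands of the union above, which yields the inclusion $\bigcup_{S:\,P_S\text{ maximal}}(P_S+L_S)\subseteq\overline{\tropvariety_2(f)}$.

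For the reverse inclusion, fix an arbitrary support cell $P_S$. Since the set $\{\lambda(q-p):\lambda\geq 0,\,p\in P_S,\,q\in Q\}$ grows with $Q$, the star $\sta(P_S)$ is the union of these sets over the \emph{maximal} cells $Q=P_{S_0}$ of $\tropvariety(f)$ containing $P_S$; and because $P_S\subseteq P_{S_0}$, each such set is contained in $\{\lambda(q-p):\lambda\geq 0,\ p,q\in P_{S_0}\}=L_{S_0}$. Hence any point of $P_S+\sta(P_S)$ is of the form $a+b$ with $a\in P_S\subseteq P_{S_0}$ in the $\RR_t^d$-block and $b\in L_{S_0}$ in the $\RR_u^d$-block, for some maximal $P_{S_0}\supseteq P_S$, so $P_S+\sta(P_S)\subseteq\bigcup_{P_{S_0}\text{ maximal}}(P_{S_0}+L_{S_0})$. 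Combining the two inclusions gives $\overline{\tropvariety_2(f)}=\bigcup_{S:\,P_S\text{ maximal}}(P_S+L_S)$, and each summand, being the Minkowski sum of a polyhedron in $\RR_t^d$ with a linear subspace of $\RR_u^d$, is a polyhedron in $\RR^{2d}$. Finally I would note that, as $f$ has coefficients in $\puiseux{\CC}{t^*}$, one has $\pi_{u\mapsto\sigma}(f)=f$, so $\tropvariety(f)$ coincides with $\tropvariety(\pi_{u\mapsto\sigma}(f))$ as in the statement.

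The statement is not hard once Corollary~\ref{cor:KK+hypersurface+structure} is in hand; the point needing the most care is the closure step, where one passes between the order topology (in which $\tropvariety_2(f)$ naturally lives) and the Euclidean topology — the orthogonal-product description makes this transparent. Everything else is the routine polyhedral bookkeeping of checking that the stars of the lower-dimensional support cells contribute nothing beyond the summands $P_S+L_S$ coming from the maximal cells.
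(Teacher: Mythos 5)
Your proof is correct and follows essentially the same route as the paper: start from the decomposition $\bigsqcup_S(\inte(P_S)+\sta(P_S))$ of Corollary~\ref{cor:KK+hypersurface+structure}, pass to the Euclidean closure to get $\bigcup_S(P_S+\sta(P_S))$, and then collapse onto the maximal support cells using that $\sta(P_S)=L_S$ for maximal $P_S$ while the star pieces of non-maximal cells are absorbed into $P_{S_0}+L_{S_0}$ for maximal $P_{S_0}\supseteq P_S$. The only difference is cosmetic: you re-derive the closure identity directly via the product-of-closures argument (rather than quoting Remark~\ref{rem:hypersurface+characterisation}) and spell out the elementary facts that the cone of differences of a polyhedron is its parallel linear space and that the star is closed, which the paper leaves implicit.
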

\begin{proof}
Remark \ref{rem:hypersurface+characterisation} and Corollary \ref{cor:KK+hypersurface+structure} imply that $\overline{\tropvariety_2(f)}$ equals the union $\bigcup \bigl( P_S \times \sta(P_S) \bigr)$.
Each cell of $\sta(P_S)$ is labelled by some $T \subseteq S$ corresponding to $P_T \supseteq P_S$.
Note that if $P_S$ is a maximal support cell of $\tropvariety(\pi_{u\mapsto\sigma}(f))$, $\sta(P_S)$ is simply the linear space $L_S$.
Furthermore, if $P_S$ is not a maximal support cell of $\tropvariety(\pi_{u\mapsto\sigma}(f))$, then the maximal cell of $\sta(P_S)$ labelled by $T \subset S$ is contained in $L_T$.
Therefore we can restrict the union to just the maximal support cells, giving the desired result.
\end{proof}

\begin{example}\label{exmp:elliptic}
  Consider the degree three polynomial
  \[
    f \ = \ 1 + t(x+y) + t^3xy + t^5(x^2+y^2) + t^9(x^2y+xy^2) + t^{15}(x^3+y^3)
  \]
  in $\hahnfrac{\CC}{t}[x,y]$.
  It describes an elliptic curve, whose rank one tropicalisation is shown in Figure~\ref{fig:elliptic}.
  When we view $f$ as a polynomial with coefficients in $\hahnfrac{\CC}{t,u}$, Corollary~\ref{cor:KK+hypersurface+structure} describes the resulting rank two tropical curve.
  The partial evaluation $\pi_{u\mapsto\sigma}(f)$ equals $f$, and $\pi_{t\mapsto\rho}(f)$ has constant coefficients.
  For instance, let us look at the cell marked ``$P_S$'' in Figure~\ref{fig:elliptic} where $S =\{(0,1),(1,1)\}$, we get $f_S=ty+t^3xy$.
  It follows that $L_S=\tropvariety(f_S)$ is the $y$-axis, and this is also the only cell in that tropical hypersurface.
\end{example}

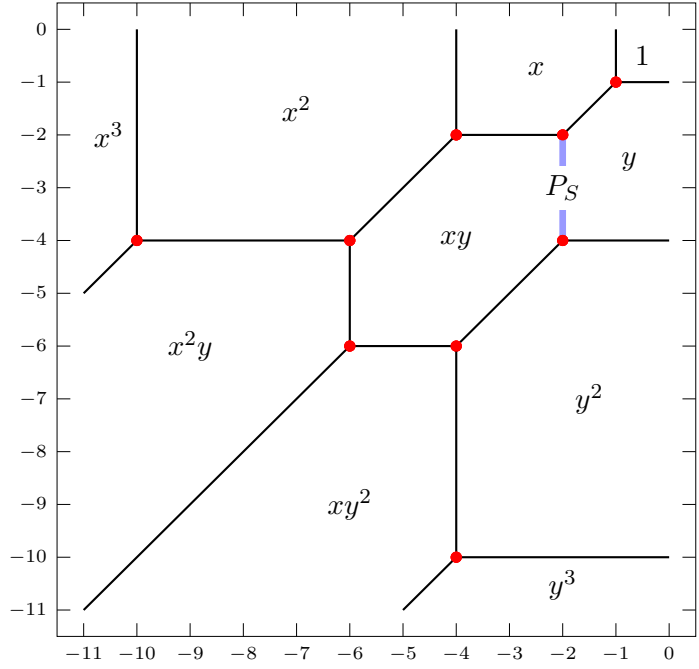
\begin{figure}
\begingroup

\newcommand\gridoffset{0.5}
\newcommand\ticklength{0.25}
\newcommand\gridfont{\tiny}

\begin{tikzpicture}[x  = {(1cm,0cm)},
                    y  = {(0cm,1cm)},
                    scale = 0.7,
                    color = {black}]

  \definecolor{pointcolor}{rgb}{ 1,0,0 }
  \tikzstyle{pointstyle} = [fill=pointcolor]
  \tikzstyle{linestyle} = [black, thick]
  \tikzstyle{marked} = [blue!40, line width=2.5pt]
  \tikzstyle{gridstyle} = [black, very thin]

  \draw[gridstyle] ($ (0,0) + (\gridoffset,\gridoffset) $) -- ($ (0,-11) + (\gridoffset,-\gridoffset) $)  -- ($ (-11,-11) - (\gridoffset,\gridoffset) $) -- ($ (-11,0) + (-\gridoffset,\gridoffset) $) -- cycle;

  \foreach \i in {-11,-10,...,0}{
    \coordinate (rightbd) at ($ (0,\i) + (\gridoffset,0) $);
    \draw[gridstyle] (rightbd) -- + (-\ticklength,0);

    \coordinate (leftbd) at ($ (-11,\i) - (\gridoffset,0) $);
    \draw[gridstyle] (leftbd) -- + (\ticklength,0);
    \node at (leftbd) [left] { \gridfont $\i$ };

    \coordinate (topbd) at ($ (\i,0) + (0,\gridoffset) $);
    \draw[gridstyle] (topbd) -- + (0,-\ticklength);

    \coordinate (bottombd) at ($ (\i,-11) - (0,\gridoffset) $);
    \draw[gridstyle] (bottombd) -- + (0,\ticklength);
    \node at (bottombd) [below] { \gridfont $\i$ };
  }
  
  \coordinate (v0_unnamed__1) at (-1, 0);
  \coordinate (v1_unnamed__1) at (-1, -1);

  \draw[linestyle] (v1_unnamed__1) -- (v0_unnamed__1);

  \fill[pointcolor] (v1_unnamed__1) circle (3 pt);

  \coordinate (v0_unnamed__2) at (-4, 0);
  \coordinate (v1_unnamed__2) at (-4, -2);

  \draw[linestyle] (v1_unnamed__2) -- (v0_unnamed__2);

  \fill[pointcolor] (v1_unnamed__2) circle (3 pt);

  \coordinate (v0_unnamed__3) at (-10, 0);
  \coordinate (v1_unnamed__3) at (-10, -4);

  \draw[linestyle] (v1_unnamed__3) -- (v0_unnamed__3);

  \fill[pointcolor] (v1_unnamed__3) circle (3 pt);

  \coordinate (v0_unnamed__4) at (0, -1);
  \coordinate (v1_unnamed__4) at (-1, -1);

  \draw[linestyle] (v1_unnamed__4) -- (v0_unnamed__4);

  \fill[pointcolor] (v1_unnamed__4) circle (3 pt);

  \coordinate (v0_unnamed__5) at (-1, -1);
  \coordinate (v1_unnamed__5) at (-2, -2);

  \draw[linestyle] (v1_unnamed__5) -- (v0_unnamed__5);

  \fill[pointcolor] (v1_unnamed__5) circle (3 pt);
  \fill[pointcolor] (v0_unnamed__5) circle (3 pt);

  \coordinate (v0_unnamed__6) at (-2, -2);
  \coordinate (v1_unnamed__6) at (-4, -2);

  \draw[linestyle] (v1_unnamed__6) -- (v0_unnamed__6);

  \fill[pointcolor] (v1_unnamed__6) circle (3 pt);
  \fill[pointcolor] (v0_unnamed__6) circle (3 pt);

  \coordinate (v0_unnamed__7) at (-4, -2);
  \coordinate (v1_unnamed__7) at (-6, -4);

  \draw[linestyle] (v1_unnamed__7) -- (v0_unnamed__7);

  \fill[pointcolor] (v1_unnamed__7) circle (3 pt);
  \fill[pointcolor] (v0_unnamed__7) circle (3 pt);

  \coordinate (v0_unnamed__8) at (-2, -2);
  \coordinate (v1_unnamed__8) at (-2, -4);

  \draw[marked] (v1_unnamed__8) -- (v0_unnamed__8);
  \node at (-2,-3) [fill=white] { $P_S$ };
  
  \fill[pointcolor] (v1_unnamed__8) circle (3 pt);
  \fill[pointcolor] (v0_unnamed__8) circle (3 pt);

  \coordinate (v0_unnamed__9) at (-6, -4);
  \coordinate (v1_unnamed__9) at (-10, -4);

  \draw[linestyle] (v1_unnamed__9) -- (v0_unnamed__9);

  \fill[pointcolor] (v1_unnamed__9) circle (3 pt);
  \fill[pointcolor] (v0_unnamed__9) circle (3 pt);
 
  \coordinate (v0_unnamed__10) at (-10, -4);
  \coordinate (v1_unnamed__10) at (-11, -5);

  \draw[linestyle] (v1_unnamed__10) -- (v0_unnamed__10);

  \fill[pointcolor] (v0_unnamed__10) circle (3 pt);

  \coordinate (v0_unnamed__11) at (-6, -4);
  \coordinate (v1_unnamed__11) at (-6, -6);

  \draw[linestyle] (v1_unnamed__11) -- (v0_unnamed__11);

  \fill[pointcolor] (v1_unnamed__11) circle (3 pt);
  \fill[pointcolor] (v0_unnamed__11) circle (3 pt);

  \coordinate (v0_unnamed__12) at (0, -4);
  \coordinate (v1_unnamed__12) at (-2, -4);

  \draw[linestyle] (v1_unnamed__12) -- (v0_unnamed__12);

  \fill[pointcolor] (v1_unnamed__12) circle (3 pt);

  \coordinate (v0_unnamed__13) at (-2, -4);
  \coordinate (v1_unnamed__13) at (-4, -6);

  \draw[linestyle] (v1_unnamed__13) -- (v0_unnamed__13);

  \fill[pointcolor] (v1_unnamed__13) circle (3 pt);
  \fill[pointcolor] (v0_unnamed__13) circle (3 pt);

  \coordinate (v0_unnamed__14) at (-4, -6);
  \coordinate (v1_unnamed__14) at (-6, -6);

  \draw[linestyle] (v1_unnamed__14) -- (v0_unnamed__14);

  \fill[pointcolor] (v1_unnamed__14) circle (3 pt);
  \fill[pointcolor] (v0_unnamed__14) circle (3 pt);

  \coordinate (v0_unnamed__15) at (-6, -6);
  \coordinate (v1_unnamed__15) at (-11, -11);

  \draw[linestyle] (v1_unnamed__15) -- (v0_unnamed__15);

  \fill[pointcolor] (v0_unnamed__15) circle (3 pt);

  \coordinate (v0_unnamed__16) at (-4, -6);
  \coordinate (v1_unnamed__16) at (-4, -10);

  \draw[linestyle] (v1_unnamed__16) -- (v0_unnamed__16);

  \fill[pointcolor] (v1_unnamed__16) circle (3 pt);
  \fill[pointcolor] (v0_unnamed__16) circle (3 pt);

  \coordinate (v0_unnamed__17) at (0, -10);
  \coordinate (v1_unnamed__17) at (-4, -10);

  \draw[linestyle] (v1_unnamed__17) -- (v0_unnamed__17);

  \fill[pointcolor] (v1_unnamed__17) circle (3 pt);

  \coordinate (v0_unnamed__18) at (-4, -10);
  \coordinate (v1_unnamed__18) at (-5, -11);

  \draw[linestyle] (v1_unnamed__18) -- (v0_unnamed__18);

  \fill[pointcolor] (v0_unnamed__18) circle (3 pt);

  \node at (-0.5,-0.5) { $1$ };
  \node at (-2.5,-0.75) { $x$ };
  \node at (-7,-1.5) { $x^2$ };
  \node at (-11,-2) [right] { $x^3$ };
  \node at (-0.75,-2.5) { $y$ };
  \node at (-1.5,-7) { $y^2$ };
  \node at (-2,-11) [above] { $y^3$ };
  \node at (-4,-4) { $xy$ };
  \node at (-9,-6) { $x^2y$ };
  \node at (-6,-9) { $xy^2$ };
\end{tikzpicture}
\endgroup

  \caption{Tropical elliptic curve with the one-dimensional cell $P_S$ marked; cf.\ Example~\ref{exmp:elliptic}.  Each region is labelled with its supporting monomial.}
  \label{fig:elliptic}
\end{figure}

To develop a new description of stable intersection, we introduce the following notion of perturbation on the level of convergent Hahn series.

\begin{definition}
  Let $\beta>0$ be a fixed transcendental number.
  The \emph{$u$-perturbation} of $f$ by $\beta$ is the polynomial $f^u \in \hahnconvergent{\CC}{t,u}[x_1^{\pm},\dots,x_d^{\pm}]$ obtained from $f$ by the $d$ linear substitutions $x_k \mapsto u^{\beta^k}x_k$.
\end{definition}
We are interested in the effect of the $u$-perturbation to the tropicalisation of $f$.
As $\val(u) < \val(t)$, the variable $u$ can be considered an infinitesimal perturbation to the coefficients of $f$.
Explicitly, the $u$-perturbation of the term $\gamma_s x^s$, which is a $d$-variate Laurent monomial whose single coefficient $\gamma_s$ lies in $\hahnconvergent{\CC}{t}$, equals
\[
  \gamma_s u^{s_1\beta+s_2\beta^2+\dots+s_d\beta^d} x^s \enspace .
\]
Its rank two tropicalisation is 
\[
  \bigl(\val(\gamma_s), \sum s_i\beta^i\bigr) + s_1x_1+\dots+s_dx_d  \enspace .
\]
Since $\beta$ is transcendental, the expression $\sum s_i\beta^i$ does not vanish, unless $s_1=\dots=s_d=0$.
In particular, we have $u^{s_1\beta+s_2\beta^2+\dots+s_d\beta^d}\neq 1$, and it follows that no nonconstant term of $f^u$ has a coefficient which lies in the subfield $\hahnconvergent{\CC}{t}$.
Yet the partial evaluation $\pi_{u\mapsto\sigma}(f)$ is defined for all $\sigma > 0$.
Moreover, $\supp(f^u)=\supp(f)$.

The following lemma describes the $u$-perturbation as a translation at the level of rank two tropical hypersurfaces.

\begin{lemma} \label{lem:perturb+structure}
  Let $f \in \hahnconvergent{\CC}{t}[x_1^{\pm},\dots,x_d^{\pm}]$ be a $d$-variate Laurent polynomial.
  Then
  \[
    \tropvariety_2(f) \ = \ \tropvariety_2(f^u) + \smallpmatrix{ 0 & 0 & \dots & 0 \\ \beta & \beta^2 & \dots & \beta^d} \enspace .
  \]
  Moreover, the same holds for the closures in the Euclidean topology, i.e.,
  \[
    \overline{\tropvariety_2(f)} \ = \ \overline{\tropvariety_2(f^u)} + \smallpmatrix{ 0 & 0 & \dots & 0 \\ \beta & \beta^2 & \dots & \beta^d} \enspace .
  \]
\end{lemma}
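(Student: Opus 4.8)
The plan is to show that the substitution $x_k\mapsto u^{\beta^k}x_k$ merely rescales the coefficient of each monomial $x^s$ by the power $u^{\langle s,b\rangle}$, where $b=(\beta,\beta^2,\dots,\beta^d)$, and that the induced change in $\trop_2$ is exactly compensated by translating the evaluation point in the $\RR_u^d$-directions. First I would write $f=\sum_s\gamma_s x^s$ with $\gamma_s\in\puiseux{\CC}{t^*}$ and record that $f^u=\sum_s\gamma_s\,u^{\langle s,b\rangle}\,x^s$, where $\langle s,b\rangle=\sum_{i=1}^d s_i\beta^i$. Since each $\gamma_s$ involves only $t$, we have $\val_2(\gamma_s)=(\val(\gamma_s),0)$ by \eqref{eq:val-closure}, and since $\val_2$ is multiplicative the coefficient of $x^s$ in $f^u$ has rank two valuation $\val_2(\gamma_s)+(0,\langle s,b\rangle)$. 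Here I would also invoke the fact, noted just before the lemma, that $\supp(f^u)=\supp(f)$, so that $\trop_2(f)$ and $\trop_2(f^u)$ range over the same index set of monomials.

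Next I would compare the individual terms. Let $w\in(\TT_2)^d$ be the point whose $\RR_t^d$-coordinates are all zero and whose $\RR_u^d$-coordinates are $\beta,\dots,\beta^d$, so that $\langle s,w\rangle=(0,\langle s,b\rangle)$ for every $s\in\ZZ^d$. Using additivity of the pairing \eqref{eq:pairing} in its second argument, for any $p\in(\TT_2)^d$ and any $s\in\supp(f)$ the $s$-th term of $\trop_2(f^u)$ evaluated at $p-w$ is
\[
  \val_2\bigl(\gamma_s u^{\langle s,b\rangle}\bigr)+\langle s,p-w\rangle \ = \ \val_2(\gamma_s)+(0,\langle s,b\rangle)+\langle s,p\rangle-(0,\langle s,b\rangle) \ = \ \val_2(\gamma_s)+\langle s,p\rangle,
\]
which is precisely the $s$-th term of $\trop_2(f)$ evaluated at $p$. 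Taking minima over $s\in\supp(f)$, I conclude that $\cD_{p-w}(f^u)=\cD_p(f)$ as subsets of $\ZZ^d$; in particular $|\cD_{p-w}(f^u)|>1$ if and only if $|\cD_p(f)|>1$. By Definition~\ref{def:hypersurface} this says $p\in\tropvariety_2(f)$ if and only if $p-w\in\tropvariety_2(f^u)$, i.e., $\tropvariety_2(f)=\tropvariety_2(f^u)+w$, which is the first identity.

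For the statement about Euclidean closures I would simply observe that translation by the fixed vector $w$ is a homeomorphism of $\RR^{2d}$ equipped with the Euclidean topology, hence commutes with taking closures: $\overline{\tropvariety_2(f)}=\overline{\tropvariety_2(f^u)+w}=\overline{\tropvariety_2(f^u)}+w$. The argument overall is essentially bookkeeping, and there is no serious obstacle; the only point requiring a little care is to track that the exponent of $u$ produced by the substitution lands in the \emph{second} $\TT_2$-coordinate of each monomial's tropicalisation, together with the implicit use of $\supp(f^u)=\supp(f)$ so that the same monomials are being compared on both sides.
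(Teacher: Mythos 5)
Your proposal is correct and follows essentially the same route as the paper: the same term-by-term bookkeeping showing that the $s$-th term of $\trop_2(f^u)$ at the translated point equals the $s$-th term of $\trop_2(f)$ at the original point (this is the computation \eqref{eq:perturb+structure} in the paper), using $\supp(f^u)=\supp(f)$ and $\val_2(\gamma_s)=(\val(\gamma_s),0)$. For the closures, your observation that translation by a fixed vector is a Euclidean homeomorphism is the same point the paper makes via continuity of the arithmetic operations.
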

\begin{proof}
  Let 
  \[
  p = \smallpmatrix{
p_{11} & \dots & p_{1d} \\
p_{21} & \dots & p_{2d}
}
\in\tropvariety_2(f) \enspace .
  \]
  Then there exist distinct $s$ and $s'$ in $\supp(f)$ with $\val_2 (\gamma_s) + \langle s , p \rangle = \val_2 (\gamma_{s'}) + \langle s' , p \rangle$, where $\val_2(\gamma_s)=(\val(\gamma_s),0)$ and $\val_2(\gamma_{s'})=(\val(\gamma_{s'}),0)$.
  Hence
  \begin{equation}\label{eq:perturb+structure}
    \begin{split}
      \bigl(\val(&\gamma_s),\sum s_i\beta^i\bigr) + s_1(p_{11},p_{21}-\beta)+ \dots + s_d(p_{1d},p_{2d}-\beta^d) \\
      = \ &\val_2 (\gamma_s) + \langle s , p \rangle \ = \ \val_2 (\gamma_{s'}) + \langle s' , p \rangle \\
      = \ &\bigl(\val(\gamma_{s'}), \sum s_i\beta^i\bigr) + s'_1(p_{11},p_{21}-\beta)+ \dots + s'_d(p_{1d},p_{2d}-\beta^d) \enspace .
    \end{split}
  \end{equation}
  In other words, as $\supp(f^u)=\supp(f)$, the point 
  \[
  \smallpmatrix{
p_{11} & \dots & p_{1d} \\
p_{21} - \beta & \dots & p_{2d} - \beta^d
} = p - \smallpmatrix{ 0 & 0 & \dots & 0 \\ \beta & \beta^2 & \dots & \beta^d}
  \]
  lies in $\tropvariety_2(f^u)$, and this proves one inclusion.
  The argument can be reversed, and the claim on $\tropvariety_2(f)$ follows.
  The explicit computation in \eqref{eq:perturb+structure} carries over to the topological closure by continuity of the arithmetic operations.
\end{proof}

We recall the following concepts from \cite[\S3.6]{Tropical+Book}.
Let $f$ and $g$ be Laurent polynomials in $\hahnconvergent{\CC}{t}[x_1^{\pm},\dots,x_d^{\pm}]$.
The (polyhedral) \emph{stable intersection} of their tropical hypersurfaces is the polyhedral complex
\begin{equation}
  \tropvariety(f) \stableintersection \tropvariety(g) \ = \ \bigcup_{\dim(P + Q) = d} (P \cap Q)
\label{eq:stable+intersection}
\end{equation}
where $P$ and $Q$ are cells of $\tropvariety(f)$ and $\tropvariety(g)$, respectively.
This is a coarser notion than stable intersection of tropical varieties as it does not remember the multiplicities of the varieties.
Unless explicitly stated, we restrict purely to polyhedral stable intersection from now on.

\begin{theorem} \label{thm:stable+intersection}
  Let $f,g \in \hahnconvergent{\CC}{t}[x_1^{\pm},\dots,x_d^{\pm}]$.
  The stable intersection of $\tropvariety(f)$ and $\tropvariety(g)$ is given by projecting the set theoretic intersection of (the closures of) the rank two tropical hypersurfaces $\overline{\tropvariety_2(f)}$ and $\overline{\tropvariety_2(g^u)}$; more precisely,
  \[
    \tropvariety(f) \stableintersection \tropvariety(g) \ = \ \pi_{u*}\bigl(\overline{\tropvariety_2(f)} \cap \overline{\tropvariety_2(g^u)}\bigr) \enspace .
  \]
\end{theorem}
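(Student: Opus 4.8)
The plan is to feed the structural descriptions of the two closed rank two hypersurfaces from Corollary~\ref{cor:KK+hypersurface+closure} and Lemma~\ref{lem:perturb+structure} into the projection $\pi_{u*}$, and then recognise the resulting combinatorial condition as transversality of facets. First I would apply Corollary~\ref{cor:KK+hypersurface+closure} to $f$ to write $\overline{\tropvariety_2(f)} = \bigcup_S (P_S + L_S)$, where $S$ ranges over the maximal support sets of $\tropvariety(f)$, the facet $P_S$ lies in $\RR_t^d$, and $L_S \subseteq \RR_u^d$ is the linear space parallel to $P_S$. Applying the same corollary to $g$ and then translating by $v := (\beta,\dots,\beta^d)\in\RR_u^d$, as permitted by Lemma~\ref{lem:perturb+structure}, gives $\overline{\tropvariety_2(g^u)} = \bigcup_T\bigl(Q_T + (M_T - v)\bigr)$, with $Q_T \subseteq \RR_t^d$ a facet of $\tropvariety(g)$ and $M_T \subseteq \RR_u^d$ the parallel linear space. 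Since $\RR_t^d$ and $\RR_u^d$ are complementary coordinate subspaces of $\RR^{2d}$, a point $(x;y)$ lies in the intersection if and only if there are $S,T$ with $x \in P_S \cap Q_T$ and $y \in L_S \cap (M_T - v)$. As $\pi_{u*}$ discards the $\RR_u^d$-coordinate and $L_S \cap (M_T - v)\neq\emptyset$ is equivalent to $v \in L_S + M_T$, this yields
\[
  \pi_{u*}\bigl(\overline{\tropvariety_2(f)}\cap\overline{\tropvariety_2(g^u)}\bigr) \ = \ \bigcup_{S,T}\SetOf{x \in P_S \cap Q_T}{v \in L_S + M_T} \enspace .
\]

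The key step, and the conceptual heart of the argument, is the claim that $v \in L_S + M_T$ is equivalent to $L_S + M_T = \RR^d$; this is where the transcendence of $\beta$ enters. The direction space $L_S$ is the orthogonal complement of $\operatorname{span}_{\ZZ}\{\,s-s' : s,s'\in S\,\}$, hence a rational subspace of $\RR^d$, and likewise for $M_T$, so $L_S + M_T$ is rational. If it were a proper subspace it would lie in an integer hyperplane $n^{\perp}$ for some $n=(n_1,\dots,n_d)\in\ZZ^d\setminus\{0\}$, and then $v \in L_S + M_T$ would force $\sum_{i=1}^d n_i\beta^i = 0$, a nontrivial algebraic relation contradicting transcendence; the converse inclusion is trivial. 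Since $\dim(P_S + Q_T) = \dim(L_S + M_T)$, the right-hand side above equals the union of $P_S \cap Q_T$ over all pairs of facets of $\tropvariety(f)$ and $\tropvariety(g)$ with $\dim(P_S + Q_T) = d$, i.e.\ meeting transversally.

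Finally I would reconcile this with the defining formula~\eqref{eq:stable+intersection}. The inclusion of the facet union into $\tropvariety(f)\stableintersection\tropvariety(g)$ is immediate, since facets are cells. For the reverse inclusion, given cells $P$ of $\tropvariety(f)$ and $Q$ of $\tropvariety(g)$ with $\dim(P+Q)=d$ and a point $x \in P \cap Q$, purity of the two tropical hypersurfaces lets me enlarge $P$ and $Q$ to facets $P'\supseteq P$ and $Q'\supseteq Q$; these still contain $x$, and $L_{P'}\supseteq L_P$ together with $L_{Q'}\supseteq L_Q$ force $\dim(P'+Q')=d$, so $x$ already appears in the facet union, and the two sets coincide. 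I expect the transcendence step of the second paragraph to be the main obstacle: it realises the ``symbolic perturbation'' mechanism through the substitution $x_k\mapsto u^{\beta^k}x_k$, and the point is precisely that the rationality of the cell directions lets a single transcendental $\beta$ play the role of a generic perturbation; the remaining steps are bookkeeping with the orthogonal splitting $\RR^{2d}=\RR_t^d\oplus\RR_u^d$ and with purity of tropical hypersurfaces.
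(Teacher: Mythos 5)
Your proof is correct and follows essentially the same route as the paper: both rest on Corollary~\ref{cor:KK+hypersurface+closure} and Lemma~\ref{lem:perturb+structure}, and both come down to the fact that the translation vector $(\beta,\dots,\beta^d)$ lies in $L_S+M_T$ precisely when the corresponding facets of $\tropvariety(f)$ and $\tropvariety(g)$ meet transversally. Your write-up is in fact slightly more explicit than the paper's, which argues pointwise in both directions and leaves implicit both the transcendence/rationality argument showing $(\beta,\dots,\beta^d)$ avoids every proper rational subspace and the purity argument reducing the definition \eqref{eq:stable+intersection} from arbitrary cells to facets.
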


\begin{proof}
  Let $p_1 \in \tropvariety(f) \stableintersection \tropvariety(g) \subset \RR_t^d$.
  Then there are maximal support cells $P_S$ and $P_T$ of $\tropvariety(f)$ and $\tropvariety(g)$, respectively, containing $p_1$ with $\dim (P_S+P_T)=d$.
  Corollary \ref{cor:KK+hypersurface+closure} says that $P_S \times L_S$ and $P_T \times L_T$ are maximal polyhedra in $\overline{\tropvariety_2(f)}$ and $\overline{\tropvariety_2(g)}$, respectively.
  We have 
  \[
  \overline{\tropvariety_2(g)}=\overline{\tropvariety_2(g^u)}+\smallpmatrix{ 0 & 0 & \dots & 0 \\ \beta & \beta^2 & \dots & \beta^d}
  \]
  by Lemma \ref{lem:perturb+structure}.
  From $\dim (P_S+P_T)=d$, we infer $L_S+L_T=\RR_u^d$.
  Thus there are $q_S\in L_S$ and $q_T\in L_T$ with $q_T-q_S=(\beta,\dots,\beta^d)$.
  Hence, setting $p_2 := q_S = q_T-(\beta,\dots,\beta^d)$ and $p := p_1 + p_2$, yields
  \[
    p \in (P_S \times L_S) \cap (P_T \times (L_T - (\beta,\dots,\beta^d))) \enspace ,
  \]
  which is contained in $\overline{\tropvariety_2(f)} \cap \overline{\tropvariety_2(g^u)}$, and $\pi_{u*}(p)=p_1$.

  Conversely let $p \in \overline{\tropvariety_2(f)} \cap \overline{\tropvariety_2(g^u)} \subset \RR^{2\times d}$.
  Then there are maximal support cells $P_S$ and $P_T$ of $\tropvariety(f)$ and $\tropvariety(g)$, respectively, such that $\pi_{u*}(p)\in P_S\cap P_T$ and $\pi_{t*}(p)\in  L_S \cap (L_T - (\beta,\dots,\beta^d))$.
  We need to show that $\dim(P_S+P_T)=d$.
  As $P_S$ and $P_T$ are both maximal, we have $\dim P_S=\dim L_S=\dim L_T=\dim P_T=d-1$.
  Suppose that $\dim(P_S+P_T)<d$.
  Then $\dim(P_S+P_T)=d-1$, and the linear subspaces $L_S=L_T$ must be equal.
  As a consequence the linear subspace $L_S$ and the parallel affine subspace $L_T - (\beta,\dots,\beta^d)$ are disjoint.
  Yet this contradicts that $\pi_{t*}(p)$ lies in their intersection.
  We conclude that $\dim(P_S+P_T)=d$, and $\pi_{u*}(p)$ is contained in the stable intersection.
\end{proof}

The stable intersection of $\tropvariety(f)$ and $\tropvariety(g)$ can also be obtained by perturbing $\tropvariety(g)$ generically and taking the limit of its intersection with $\tropvariety(f)$ \cite[Proposition 3.6.12]{Tropical+Book}, i.e.,
\begin{equation}\label{eq:stable+limit}
  \tropvariety(f) \stableintersection \tropvariety(g) \ = \ \lim_{\epsilon \rightarrow 0} \bigl( \tropvariety(f) \cap (\tropvariety(g) + \epsilon {v}) \bigr)
\end{equation}
for ${v} \in \RR^d$ generic.
In this way, Theorem~\ref{thm:stable+intersection} can be seen as a version of \eqref{eq:stable+limit} based on the ``symbolic perturbation'' paradigm common in computational geometry; e.g., see \cite{Edelsbrunner87} and \cite{EmirisCannySeidel97}.

\begin{example}
  Consider the two bivariate polynomials
  \[
    f = xy + x + y + 1 \quad \text{and} \quad g = x + ty + t
  \]
  with coefficients in $\hahnfrac{\CC}{t}$.
  The intersection of their corresponding rank one tropical hypersurfaces is a ray and a point
  \[
    \tropvariety(f) \cap \tropvariety(g) \ = \ \SetOf{(\lambda+1,0)}{\lambda \geq 0} \cup \{(0,-1)\} \enspace .
  \]
  That is, the intersection at the origin is not \emph{transverse} in the sense of \cite[Definition 3.4.9]{Tropical+Book}.
  We consider $f$ and $g$ as polynomials with coefficients in $\hahnfrac{\CC}{t,u}$.
  The $u$-perturbation of $g$ is
  \[
    g^u \ = \ u^{\beta}x + tu^{\beta^2}y + t \enspace .
  \]
  The closure of their rank two tropical hypersurfaces in $\RR^{2\times 2}$ read as follows:
  \[
    \begin{aligned}
      \overline{\tropvariety_2(f)} \ =& \ \SetOf{\smallpmatrix{ \lambda_1 & 0 \\ \lambda_2 & 0 }}{\lambda_1 \geq 0,\lambda_2 \in \RR} \, \cup \, \SetOf{\smallpmatrix{ \lambda_1 & 0 \\ \lambda_2 & 0 }}{\lambda_1 \leq 0,\lambda_2 \in \RR} \\
      &\cup \, \SetOf{\smallpmatrix{ 0 & \lambda_1 \\ 0 & \lambda_2 }}{\lambda_1 \geq 0,\lambda_2 \in \RR} \, \cup \, \SetOf{\smallpmatrix{ 0 & \lambda_1 \\ 0 & \lambda_2 }}{\lambda_1 \leq 0,\lambda_2 \in \RR} \\
      \overline{\tropvariety_2(g^u)} \ =& \ \SetOf{\smallpmatrix{ 1+ \lambda_1 & 0 \\ \lambda_2 & -\beta^2 }}{\lambda_1 \geq 0, \lambda_2 \in \RR} \, \cup \, \SetOf{\smallpmatrix{ 1 & \lambda_1 \\ -\beta & \lambda_2 }}{\lambda_1 \geq 0, \lambda_2 \in \RR} \\
      &\cup \, \SetOf{\smallpmatrix{ 1 - \lambda_1 & -\lambda_1 \\ -\beta + \lambda_2 & -\beta^2 + \lambda_2 }}{\lambda_1 \geq 0, \lambda_2 \in \RR} \enspace .
    \end{aligned}
  \]
  Their intersection is the three points 
  \[
  \overline{\tropvariety_2(f)} \cap \overline{\tropvariety_2(g^u)} = \left\{\smallpmatrix{ 1 & 0 \\ -\beta & 0 } , \smallpmatrix{ 1 & 0 \\ \beta^2-\beta & 0 } , \smallpmatrix{ 0 & -1 \\ 0 & \beta - \beta^2 }\right\} \enspace .
  \]
  Projecting them via $\pi_{u*}$ yields $(1,0)$ and $(0,-1)$ in $\RR^2$.
  These two points form the stable intersection of $\tropvariety(f)$ and $\tropvariety(g)$.
\end{example}

\section{Rank two tropical convexity}
\label{sec:convex}
\noindent
Now we switch back to formal Hahn series with real coefficients.
The map
$
  \val_2 : \hahnseries{\RR}{t,u}\setminus\{0\} \to \TT_2
$
is a rank two valuation which is surjective.
It sends an element $\gamma(t,u)$ of $\hahnseries{\RR}{t,u}$ to its smallest exponent vector.
The restriction to positive series is an order reversing homomorphism of ordered semirings onto $\TT_2$, which is equipped with the lexicographic ordering; cf.\ \eqref{eq:frac+diagram}.
For instance, we have the following strict inequalities
\[
  t^9 \ < \ t^2 \ < \ tu^{1000} \ < \ tu
\]
of positive monomials, and these are equivalent to the reverse inequalities
\[
  (9,0) \ > \ (2,0) \ > \ (1,1000) \ > \ (1,1)
\]
of the exponents.
An example involving more general series, which are not necessarily positive, is
\[
  \begin{split}
    \val_2(t^9-3t^{10})=(9,0) \ &> \ \val_2(-t^2+5t^4u^2+t^{17})=(2,0) \\ &> \ \val_2(tu^{1000})=(1,1000) \ > \ \val_2(tu)=(1,1) \enspace .
  \end{split}
\]
It is useful to extend $\TT_2$ by the additional element $\bm\infty$ which is neutral with respect to the tropical addition $\min$, absorbing with respect to the tropical multiplication $+$ and larger than any element in $\TT_2$.
By letting $\val_2(0)=\bm\infty$ this yields an extension of the rank two valuation map.
This is continuous with respect to the respective order topologies.
Recall that the order topology on $\TT_2$, which agrees with $\RR^2$ as a set, is finer than the Euclidean topology.

In the subfield $\hahnseries{\RR}{u}$ we have the inequalities $0 < u < c$ for any real number $c$, and we write this as $0 < u \ll 1$.
By the same token we have
\begin{equation}\label{eq:tu}
  0 \ < \ t \ \ll \ u \ \ll \ 1
\end{equation}
in $\hahnseries{\RR}{t,u}$. 
Since our valuation prefers terms of \emph{minimal} order we say that the indeterminate $t$ \emph{dominates} $u$.

The purpose of this section is to study the interplay between three notions of convexity: \emph{ordinary convexity} with respect to the ordered field $\hahnseries{\RR}{t,u}^d$, \emph{rank two tropical convexity} with respect to tropical semifield $\TT_2$, and \emph{lex-convexity} with respect to the lexicographic ordering on $\TT_2$.
An \emph{(ordinary) cone} in $\hahnseries{\RR}{t,u}^d$ is a nonempty subset $K$ which satisfies $\lambda p + \mu q\in K$ for all $p,q\in K$ and $\lambda,\mu\geq 0$. 
It is \emph{polyhedral} if it is finitely generated.
By definition a cone in $\hahnseries{\RR}{t,u}^d$ is exactly the same as a submodule with respect to the semiring $\hahnseries{\RR}{t,u}_{\geq 0}$ of nonnegative elements.
We now make use of the notation \enquote{$\oplus$} instead of \enquote{$\min$} and \enquote{$\odot$} instead of \enquote{$+$} to stress the connection between tropical and ordinary linear algebra .
\begin{definition}
  A \emph{rank two tropical cone} in $(\TT_2\cup\{\bm\infty\})^d$ is a nonempty subset $M$ which satisfies
  \[
    (\lambda \odot p) \oplus (\mu \odot q) \ = \ \min(\lambda+p,\mu+q) \ \in \ M
  \]
  for all $p,q\in M$ and $\lambda,\mu\in\TT_2\cup\{\bm\infty\}$.
  A rank two tropical cone is \emph{polyhedral} if it is finitely generated.
\end{definition}
The following is a rank two analogue of a result by Develin and Yu \cite[Proposition 2.1]{DevelinYu07}; see also \cite[Proposition~5.8]{ETC}.
\begin{proposition}\label{prop:cone}
  Let $K$ be an ordinary cone in $\hahnseries{\RR}{t,u}_{\geq 0}^d$.
  Then $\val_2(K)$ is a rank two tropical cone in $(\TT_2\cup\{\bm\infty\})^d$, and conversely each rank two tropical cone arises in this way.
  Furthermore, if $K$ is polyhedral then $\val_2(K)$ is also, and conversely each rank two tropical polyhedral cone is the image of a polyhedral cone in the valuation map.
\end{proposition}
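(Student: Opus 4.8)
The plan is to adapt the proof of the Develin--Yu theorem \cite[Proposition~2.1]{DevelinYu07}; its mechanism carries over to rank two because inside the nonnegative orthant leading terms never cancel. I would first isolate that observation: working in the real closed field $\puiseux{\puiseux{\RR}{u^*}}{t^*}$, which contains $\puiseux{\RR}{(t,u)^*}$ and on which $\val_2$ is defined via \eqref{eq:val-closure}, every nonzero element has a well-defined (iterated) leading coefficient, a nonzero real number, and the element is positive precisely when this real number is (compatibly with the ordering of $\puiseux{\RR}{(t,u)^*}$). Hence for nonnegative $x,y$ one has $\val_2(x+y)=\min(\val_2(x),\val_2(y))$, the lexicographic minimum, with $\val_2(0)=\bm\infty$: if one of $x,y$ is zero this is clear; if $\val_2(x)\neq\val_2(y)$ the dominant term survives in the sum; and if $x,y>0$ with $\val_2(x)=\val_2(y)$ then the coefficient of the common leading monomial of $x+y$ is $\lc(x)+\lc(y)$, a sum of two positive reals, hence nonzero. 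This sign bookkeeping is the only genuinely delicate point; everything after it is formal. I would also record that $\val_2$ restricted to the positive series surjects onto $\TT_2$ (lift $(a,b)$ to the monomial $t^au^b$) and that $\bm\infty$ is realised by $0$, so that $\val_2:\puiseux{\RR}{(t,u)^*}_{\geq 0}^d\to(\TT_2\cup\{\bm\infty\})^d$ is onto.

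Granting the observation, the first assertion follows at once. Given $p=\val_2(x)$ and $q=\val_2(y)$ with $x,y\in K$, and $\lambda,\mu\in\TT_2\cup\{\bm\infty\}$, choose nonnegative scalars $c,c'$ with $\val_2(c)=\lambda$ and $\val_2(c')=\mu$; then $cx+c'y$ lies in $K$, stays in the nonnegative orthant, and $\val_2(cx+c'y)=(\lambda\odot p)\oplus(\mu\odot q)$ by applying the observation coordinatewise. Thus $\val_2(K)$ is nonempty and closed under the tropical operations, i.e.\ a rank two tropical cone. For the converse, given any rank two tropical cone $M$, I would take $K=\smallSetOf{x\in\puiseux{\RR}{(t,u)^*}_{\geq 0}^d}{\val_2(x)\in M}$. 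Since $M$ is a tropical cone it contains the all-$\bm\infty$ vector, whence $0\in K$; lifting each point of $M$ coordinatewise (monomials for the finite entries, $0$ for the $\bm\infty$ entries) shows $K\neq\emptyset$ and $\val_2(K)=M$; and closure of $K$ under nonnegative combinations is again the observation combined with $M$ being a tropical cone.

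For the polyhedral statements, if $K=\operatorname{cone}(v_1,\dots,v_k)$ with $v_i\in\puiseux{\RR}{(t,u)^*}_{\geq 0}^d$, then writing a general element of $K$ as $\sum_i\lambda_iv_i$ with $\lambda_i\geq 0$ and applying the observation coordinatewise yields $\val_2\bigl(\sum_i\lambda_iv_i\bigr)=\bigoplus_i\val_2(\lambda_i)\odot\val_2(v_i)$; since $\val_2$ is onto $\TT_2\cup\{\bm\infty\}$ on the nonnegative elements, $\val_2(K)$ is exactly the rank two tropical cone generated by $\val_2(v_1),\dots,\val_2(v_k)$, hence polyhedral. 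Conversely, a polyhedral rank two tropical cone $M$ is generated by finitely many points $p_1,\dots,p_k$; lifting each $p_i$ coordinatewise to a vector $v_i$ of monomials and zeros gives $v_i\in\puiseux{\RR}{(t,u)^*}_{\geq 0}^d$ with $\val_2(v_i)=p_i$, and by the identity just obtained the ordinary polyhedral cone $\operatorname{cone}(v_1,\dots,v_k)$ has image $M$ under $\val_2$. As indicated, the main obstacle is solely the no-cancellation observation; once positivity is used to rule out cancellation of leading terms, each of the four claims is a short formal verification.
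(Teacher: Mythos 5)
Your proposal is correct and takes essentially the same route as the paper: the forward direction and the preservation of polyhedrality rest on $\val_2$ being a semiring homomorphism on nonnegative series (your no-cancellation observation is precisely the content of Remark~\ref{rem:positive}, extended by $\val_2(0)=\bm\infty$), and the converse is obtained by lifting points of $M$ coordinatewise to monomial vectors $t^{a}u^{b}$. The only divergence is that for the general converse you use the full preimage $\smallSetOf{x\in\puiseux{\RR}{(t,u)^*}_{\geq 0}^d}{\val_2(x)\in M}$ as the witness, while the paper uses the set of monomial lifts itself; your variant is, if anything, the more careful choice, since the preimage is literally closed under nonnegative combinations.
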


\begin{proof}
  As $\val_2$ is a homomorphism of semirings if restricted to positive Hahn series it follows that $\val_2(K)$ is a rank two tropical cone.
  Another consequence of this is that if $K$ is polyhedral then $\val_2(K)$ is also.

  It remains to show that, for a rank two tropical cone $M$ in $(\TT_2\cup\{\bm\infty\})^d$, there is a cone $K$ in $\hahnseries{\RR}{t,u}_{\geq 0}^d$ with $\val_2(K)=M$.
  We set $K$ to be the cone with generators
  \[
   \SetOf{(t^{p_{11}}u^{p_{21}},\dots,t^{p_{1d}}u^{p_{2d}})}{\smallpmatrix{
p_{11} & \dots & p_{1d} \\
p_{21} & \dots & p_{2d}
}\in M} \enspace ,
  \]
  where we use the convention $t^au^b=0$ for $(a,b)=\bm\infty$.
  Note that $\val_2(K)$ is a rank two tropical cone that contains $M$.
  Furthermore, as $\val_2:\hahnseries{\RR}{t,u}_{\geq0}\to\TT_2\cup\{\bm\infty\}$ is a homomorphism of semirings, any element of $\val_2(K)$ is a tropical conic combination of points in $M$, therefore $\val_2(K) = M$.
  As a further consequence of the homomorphism, if $M$ is polyhedral then $K$ must be also.
\end{proof}

A subset $K$ of $\hahnseries{\RR}{t,u}^d$ is \emph{(ordinary) convex} if $\lambda p + \mu q\in K$ for all $p,q\in K$ and $\lambda,\mu\geq 0$ with $\lambda+\mu=1$.
It is an \emph{(ordinary) polytope} if it is finitely generated.
\begin{definition}
  A subset $M$ of $(\TT_2\cup\{\bm\infty\})^d$ is \emph{rank two tropically convex} if $(\lambda \odot p) \oplus (\mu \odot q)\in M$ for all $p,q\in M$ and $\lambda,\mu\in\TT_2\cup\{\bm\infty\}$ with $\lambda\oplus\mu=(0,0)$.
  It is a \emph{rank two tropical polytope} if it is finitely generated.
\end{definition}

\begin{corollary} \label{cor:polytope}
  Let $K$ be a convex set in the positive orthant $\hahnseries{\RR}{t,u}_{\geq 0}^d$.
  Then $\val_2(K)$ is a rank two tropically convex set in $(\TT_2\cup\{\bm\infty\})^d$, and conversely each rank two tropically convex set arises in this way.
  Furthermore, if $K$ is an ordinary polytope then $\val_2(K)$ is a rank two tropical polytope, and conversely every rank two tropical polytope is the image of a polytope in the valuation map.
\end{corollary}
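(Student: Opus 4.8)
The plan is to mimic the proof of Proposition~\ref{prop:cone} essentially verbatim, the only difference being that the tropical scalars are now normalised by $\lambda\oplus\mu=(0,0)$ and the ordinary scalars by $\lambda+\mu=1$. Throughout, the key mechanical fact is that inside the positive orthant $\puiseux{\RR}{(t,u)^*}_{\geq 0}^d$ no cancellation can occur, so $\val_2$ of a sum of coordinates equals the $\oplus$ of their $\val_2$'s, and $\val_2$ of a product equals the $\odot$; this is Remark~\ref{rem:positive} applied coordinatewise, with the usual $\bm\infty$-conventions for zero entries.

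For the first assertion, let $p,q\in K$ and $a,b\geq 0$ with $a+b=1$. Then $\val_2(ap+bq)=\bigl(\val_2(a)\odot\val_2(p)\bigr)\oplus\bigl(\val_2(b)\odot\val_2(q)\bigr)$, and moreover $\val_2(a)\oplus\val_2(b)=\val_2(a+b)=\val_2(1)=(0,0)$; the same no-cancellation computation with finitely many summands shows that if $K$ is the ordinary convex hull of finitely many points then $\val_2(K)$ is the tropical convex hull of their images. To see that $\val_2(K)$ is closed under \emph{all} admissible tropical combinations, take $\bar p=\val_2(p),\bar q=\val_2(q)$ with $p,q\in K$ and $\lambda,\mu\in\TT_2\cup\{\bm\infty\}$ with $\lambda\oplus\mu=(0,0)$, so at least one of them is $(0,0)$; say $\lambda=(0,0)\leq\mu$. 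If $\mu=\bm\infty$ take $a=1,b=0$; if $\mu=(0,0)$ take $a=b=\tfrac12$; otherwise $(0,0)<\mu$ lexicographically forces $t^{\mu_1}u^{\mu_2}<1$ in the ordered field (using $0<t\ll u\ll1$ as in \eqref{eq:tu}), so $a:=1-t^{\mu_1}u^{\mu_2}$ and $b:=t^{\mu_1}u^{\mu_2}$ are positive, sum to $1$, and satisfy $\val_2(a)=(0,0)$, $\val_2(b)=\mu$. In every case $ap+bq\in K$ has valuation $(\lambda\odot\bar p)\oplus(\mu\odot\bar q)$, which therefore lies in $\val_2(K)$.

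For the converse direction we copy the construction of Proposition~\ref{prop:cone}: given a rank two tropically convex set $M\subseteq(\TT_2\cup\{\bm\infty\})^d$, put
\[
  K \ := \ \conv\SetOf{(t^{p_{11}}u^{p_{21}},\dots,t^{p_{1d}}u^{p_{2d}})}{(p_{11},p_{21};\dots;p_{1d},p_{2d})\in M}\,,
\]
with the convention $t^au^b=0$ when $(a,b)=\bm\infty$. Each generator $v_p$ has $\val_2(v_p)=p\in M$, so $M\subseteq\val_2(K)$. Conversely a point of $K$ is a finite combination $\sum_j a_jv_{p^{(j)}}$ with $a_j\geq0$, $\sum_ja_j=1$, $p^{(j)}\in M$; by no cancellation its valuation is $\bigoplus_j\val_2(a_j)\odot p^{(j)}$, a finitary tropical convex combination of points of $M$ since $\bigoplus_j\val_2(a_j)=\val_2(1)=(0,0)$. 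An induction on the number of terms, peeling off a summand whose tropical coefficient is minimal (hence equal to $(0,0)$) and rescaling the remainder, shows any such combination lies in $M$; thus $\val_2(K)\subseteq M$. If $M$ is a rank two tropical polytope one may take finitely many $p^{(j)}$ generating it, making $K$ an ordinary polytope, and the polyhedral half of the first assertion handles the remaining implication.

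The step I expect to require the most care is the finitary-from-binary reduction for rank two tropical convexity in the converse direction, together with the bookkeeping of $\bm\infty$-entries — both among the coordinates $p^{(j)}_i$ and among the coefficients $\val_2(a_j)$ when some $a_j=0$. Everything else is a routine translation of Proposition~\ref{prop:cone} that merely replaces the normalisation implicit there by $\lambda+\mu=1$ on the ordinary side and $\lambda\oplus\mu=(0,0)$ on the tropical side.
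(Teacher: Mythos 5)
Your argument is correct, but it takes a different route from the paper. The paper disposes of this corollary in three lines by homogenisation: it passes to the ordinary cone $K'$ generated by the vectors $(1,p)$ for $p\in K$, applies Proposition~\ref{prop:cone} to $K'$, and recovers $\val_2(K)$ as the slice of the rank two tropical cone $\val_2(K')$ at first coordinate $(0,0)$. That reduction silently absorbs precisely the two points you single out as delicate: since cones admit arbitrary nonnegative scalars, there is no need to manufacture ordinary coefficients with prescribed valuations summing to $1$ (your $a=1-t^{\mu_1}u^{\mu_2}$, $b=t^{\mu_1}u^{\mu_2}$ trick, which is correct as written, relying on $\val_2(1)=(0,0)<\mu$ and the order-reversal of Remark~\ref{rem:positive}), nor any finitary-from-binary induction for tropical convex combinations (your peeling-off argument is the standard one and works, using that all scalars are $\geq(0,0)$ when their tropical sum is $(0,0)$). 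What your direct proof buys is self-containedness and an explicit witness in the converse direction, namely $K$ as the ordinary convex hull of the monomial points $(t^{p_{11}}u^{p_{21}},\dots,t^{p_{1d}}u^{p_{2d}})$, $p\in M$, mirroring the construction inside the proof of Proposition~\ref{prop:cone}; it also makes the polytope statements explicit in both directions, whereas the paper leaves them to the same homogenisation. What the paper's route buys is brevity and reuse: all no-cancellation and $\bm\infty$ bookkeeping is done once, in the cone setting. Both arguments ultimately rest on the same fact, that $\val_2$ restricted to the nonnegative orthant is a semiring homomorphism onto $\TT_2\cup\{\bm\infty\}$.
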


\begin{proof}
  All the claims follow from Proposition~\ref{prop:cone} by homogenisation.
  Indeed, consider the cone $K'$ generated by the vectors $(1,p)\in\hahnseries{\RR}{t,u}_{\geq 0}^{d+1}$ for $p\in K$.
  Then $\val_2(K')$ is a rank two tropical cone.
  The set $M$ of points $q\in(\TT_2\cup\{\bm\infty\})^d$ such that $((0,0),q)\in\val_2(K')$ is rank two tropically convex and $\val_2(K)=M$.
\end{proof}

All of the above can be generalised to other valued fields of arbitrary rank with surjective valuation.

\paragraph*{Convergent real Hahn series.}
Now we consider a convergent subring $\hahnconvergent{\RR}{t,u}$ of $\hahnseries{\RR}{t,u}$ whose valuation map is surjective.
Then we can combine the diagram \eqref{eq:frac+diagram} with Proposition~\ref{prop:cone} to get a third diagram, this time of modules over semirings, i.e., cones.
As before $\pi_{u\mapsto\sigma}$ does not globally commute and depends on the choice of $\sigma$.

\begin{equation}\label{eq:convex+diagram}
  \begin{tikzcd}
    \hahnconvergent{\RR}{t}_{\geq0}^d \arrow{d}{\val} \arrow{r}{\iota} &  \hahnconvergent{\RR}{t,u}_{\geq0}^d \arrow{d}{\val_{2}} \arrow[dashed]{r}{\pi_{u\mapsto\sigma}} & \hahnconvergent{\RR}{t}_{\geq0}^d \arrow{d}{\val} \\
    (\TT\cup\{\infty\})^d \arrow{r}{\iota_*} & (\TT_{2}\cup\{\bm\infty\})^d \arrow{r}{\pi_{u*}} & (\TT\cup\{\infty\})^d
  \end{tikzcd}
\end{equation}

As the tropicalisation of any ordinary cone or polytope in $\hahnconvergent{\RR}{t,u}_{\geq 0}^d$ is a rank two tropical cone or polytope, any results on the latter objects hold also for the former.
Additionally, any results for rank two tropical cones give analogous results for rank two tropical polytopes by homogenisation.
Therefore for simplicity, we shall state the results for rank two tropical cones only, and we use rank two tropical polytopes in the examples.

The connection between ordinary and tropical polytopes (in rank one) is rather loose.
Often it is difficult to carry over combinatorial information.
For example, if an ordinary polytope is not sufficiently generic, distinct faces may have the same image in the valuation map.
This is a disadvantage, as some algorithms rely on that information.
Specifically, the tropical simplex method of \cite{ABGJ:2015}, which solves tropical linear programs, requires that the input is generic enough.
To obtain \cite[Theorem~4.3]{ABGJ:2014}, a result on algorithmic complexity, that obstacle was overcome in \cite[Theorem~3.12]{ABGJ:2014} via tropical convexity of higher rank.
The following example exhibits the idea.
\begin{example}\label{exmp:pyramid}
  We consider the ordinary polyhedron $\cP$ in the $3$-dimensional vector space over $\hahnconvergent{\RR}{t,u}$ given by the linear inequalities
  \begin{equation}\label{eq:pyramid:ordinary}
    \begin{array}{rclcrcl}
      (1+u)x_1 & \geq & t^2x_3 &\quad& 2t & \geq & 2x_2 + tx_3 \\
      2t^2 + (2-2t) x_2 & \geq & 2x_1 + (t+t^2)x_3 && 4x_2 & \geq & 2tx_3\\
      x_3 &\geq& 0  && && 
    \end{array}
  \end{equation}
  In fact, $\cP$ lies in the positive orthant.  Its rank two tropicalisation is given by the tropical linear inequalities
  \begin{equation}\label{eq:pyramid:tropical}
    \begin{array}{rclcrcl}
      (0,1)x_1 &\!\leq\!& (2,0)x_3 &\hspace*{-0.5cm}& (1,0) & \!\leq\! & \min \{x_2 , (1,0)x_3 \} \\
      \min \{(2,0) , x_2 \} & \!\leq\! & \min \{ x_1, (1,0)x_3 \} &\hspace*{-0.5cm}& x_2 & \!\leq\! & (1,0)x_3
    \end{array}
  \end{equation}
  In \eqref{eq:pyramid:tropical} we omit the tropical multiplication symbol \enquote{$+$}, and the tropical nonnegativity constraint $x_3\leq\bm\infty$ is implicit.

  Letting $u=0$ in \eqref{eq:pyramid:ordinary} yields an ordinary polyhedron $\cP_0$ over $\hahnconvergent{\RR}{t}$, and this is combinatorially equivalent to a pyramid with quadrangular base.
  The rank one tropicalisation of $\cP_0$ is given by $A\odot x \oplus b \leq A' \odot x \oplus b'$ where
  \[
    A=\begin{pmatrix}
      0 & \infty & \infty \\
      \infty & \infty & \infty \\
      \infty & 0 & \infty \\
      \infty & 0 & \infty \\
    \end{pmatrix} \,,\
    b=\begin{pmatrix}
      \infty \\
      1 \\
      2 \\
      \infty
    \end{pmatrix} \,,\
    A'=\begin{pmatrix}
      \infty & \infty & 2 \\
      \infty & 0 & 1 \\
      0 & \infty & 1 \\
      \infty & \infty & 1 \\
    \end{pmatrix} \,,\
    b'=\begin{pmatrix}
      \infty \\
      \infty \\
      \infty \\
      \infty
    \end{pmatrix}
  \]
  The pair of extended tropical matrices $(A \ b)$, $(A' \ b')$ is tropically sign singular i.e., $\val(\cP_0)$ is not sufficiently generic.
  As a consequence the tropical simplex algorithm from \cite{ABGJ:2015} cannot be applied directly to optimise some tropical linear objective function over $\val(\cP_0)$.

  The rank two tropical polyhedron $\val_2(\cP)$ arises from $\val(\cP_0)$ via an infinitesimal perturbation, similar to the higher rank interpretation of stable intersection from Section~\ref{sec:stable}.
  By \eqref{eq:convex+diagram} this commutes with a perturbation of $\cP_0$ to $\cP$.
  The rank two lift $\cP$ is a simple polytope, and thus its combinatorics is entirely encoded in its vertex-edge graph \cite[\S3.4]{Ziegler:Lectures+on+polytopes}.
  That property is preserved for $\cP_\sigma=\pi_{u\mapsto\sigma}(\cP)$, where $\sigma$ is a sufficiently small positive real number.
  In this way, combining \cite{ABGJ:2015} with a lift to Hahn series of higher rank yields the tropical simplex algorithm for degenerate input from \cite{ABGJ:2014}.
\end{example}

\paragraph*{Lex-polyhedral decompositions.}
Rank one tropical cones have an explicit description as a polyhedral complex in terms of their \emph{covector decomposition}; see \cite[\S5.2]{Tropical+Book} and \cite[\S6.3]{ETC}.
As with rank two tropical hypersurfaces, rank two tropical cones are not closed in the Euclidean topology; cf.\ Figure~\ref{fig:interval_image}, therefore they do not have a polyhedral decomposition in the ordinary sense.
However, we can construct an analogous decomposition in terms of lex-polyhedra by building on the corresponding notions in rank one.

\begin{figure}
  \includegraphics[width=0.6\columnwidth]{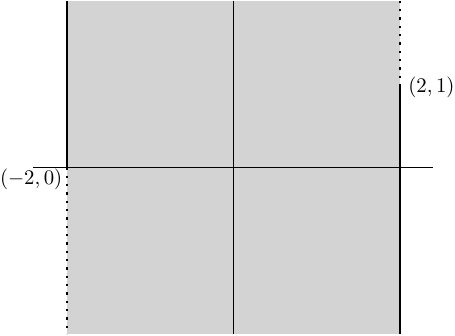}
  \caption{The tropicalisation of the ordinary interval $[t^2u,t^{-2}]$ in $\hahnconvergent{\RR}{t,u}$ as a subset of $\TT_2$.
  It is a tropically convex set generated by $\{(-2,0),(2,1)\}$.
  Note that it is not closed under the Euclidean topology as the dotted boundary is not part of the interval.}
  \label{fig:interval_image}
\end{figure}

Given a point $u \in (\TT_2\cup\{\bm\infty\})^d$ with $u_i \neq \bm\infty$, we define its \emph{$i$th sector}
\[
\lex{Z}_i(u) \ = \ \bigcap_{k \in [d], u_k \neq \bm\infty}\SetOf{p \in \TT_2^d}{p_k-p_i \leq u_k-u_i} \ = \ \bigcap_{k \in [d], u_k \neq \bm\infty} \lex{H}_{e_k-e_i,u_k - u_i}
\]
where $e_1,\dots,e_d \in \ZZ^d$ are the standard unit vectors.
Observe that by definition each sector is a lex-polyhedron.

\begin{remark} \label{rem:max+sector}
As the two operations behave isomorphically, one can choose tropical addition to be $\min$ or $\max$.
The rank $m$ tropical max-plus semiring $\TT_m^{\max} = (\RR^m, \max, +)$ is appended with the additive identity element $-\bm\infty$, the smallest element under the lexicographical ordering.
This allows us to give some geometric intuition to the sectors $\lex{Z}_i(u)$.

Given a point $u \in (\TT_2 \cup \{\bm\infty\})^d$, consider the max-tropical linear form $F_u = \max\smallSetOf{x_i - u_i}{i \in [d] \ , \ u_i \neq \bm\infty}$.
Its support is the set of standard unit vectors $\supp(F_u) = \smallSetOf{e_i}{i \in [d] \ , \ u_i \neq \bm\infty}$.
As with min-tropical hypersurfaces, its max-tropical hypersurface $\tropvariety_2(F_u)$ is the locus of points at which $F_u$ is non-linear.
The results of Section \ref{sec:hypersurfaces} hold for $\tropvariety_2(F_u)$, in particular it induces a decomposition of $\TT_2^d$ in terms of support cells.
Comparing definitions implies the sector $\lex{Z}_i(u)$ is the precisely the set of points in the support cell $\lex{P}_{e_i}$ induced by $\tropvariety_2(F_u)$.
Furthermore, these sectors can be considered as translated lex-cones, where a \emph{lex-cone} is the intersection of linear lex-halfspaces.
Therefore the lex-polyhedral cell complex $\tropvariety_2(F_u)$ induced is a translated lex-polyhedral fan (i.e., it consists of translated lex-cones) whose apex is the point $u$.
\end{remark}

In the sequel let $\cK$ be a rank two tropical cone, equipped with a fixed system of (labelled) generators $V=(v^{(1)},\dots,v^{(n)})$, where $v^{(j)}\in(\TT_2\cup\{\bm\infty\})^d$.
\begin{lemma} \label{lem:cone+cover}
  A point $p \in (\TT_2)^d$ is contained in $\cK$ if and only if for each $i \in [d]$, there exists some $j \in [n]$ such that $p \in \lex{Z}_i(v^{(j)})$.
\end{lemma}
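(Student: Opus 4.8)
The plan is to adapt the classical rank-one argument identifying the tropical span of a finite set of points with its union-of-sectors region (compare \cite[\S5.2]{Tropical+Book}). The one structural fact that makes this transfer to rank two is that, equipped with the lexicographic order, $\TT_2$ is an \emph{ordered abelian group}: differences $p_k-v^{(j)}_k$ of elements of $\TT_2$ are defined, and for fixed $c$ the translation $x\mapsto x+c$ is strictly order preserving in each coordinate. Before starting I would record the explicit description $\cK=\SetOf{(\lambda_1\odot v^{(1)})\oplus\cdots\oplus(\lambda_n\odot v^{(n)})}{\lambda_j\in\TT_2\cup\{\bm\infty\}}$; its nontrivial inclusion amounts to checking that the right-hand side is a rank two tropical cone, which reduces to the distributivity $\min(\lambda+c,\lambda'+c)=\min(\lambda,\lambda')+c$ in each coordinate (again the ordered-group property) together with associativity and commutativity of coordinatewise $\oplus$. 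Generators that are identically $\bm\infty$ can be discarded at the outset.

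The core construction is the \emph{canonical coefficient vector}: for $p\in(\TT_2)^d$ and each $j$, set $\lambda_j:=\max\SetOf{p_k-v^{(j)}_k}{k\in[d],\ v^{(j)}_k\neq\bm\infty}$, the maximum taken with respect to the lexicographic order on $\TT_2$, and let $q:=(\lambda_1\odot v^{(1)})\oplus\cdots\oplus(\lambda_n\odot v^{(n)})$. Two observations then carry the whole proof. First, $q\geq p$ coordinatewise, since $\lambda_j+v^{(j)}_i\geq(p_i-v^{(j)}_i)+v^{(j)}_i=p_i$ whenever $v^{(j)}_i\neq\bm\infty$, while the remaining terms equal $\bm\infty$. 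Second, unwinding the lexicographic minimum, the $i$-th coordinates satisfy $q_i=p_i$ if and only if some $j$ satisfies $\lambda_j+v^{(j)}_i=p_i$, equivalently $p_k-p_i\leq v^{(j)}_k-v^{(j)}_i$ for all $k$ with $v^{(j)}_k\neq\bm\infty$ — which is exactly the condition $p\in\lex{Z}_i(v^{(j)})$.

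Given these, both directions are immediate. If for every $i$ some sector $\lex{Z}_i(v^{(j)})$ contains $p$, then $q_i=p_i$ for all $i$, so $p=q\in\cK$. Conversely, if $p\in\cK$, write $p=(\mu_1\odot v^{(1)})\oplus\cdots\oplus(\mu_n\odot v^{(n)})$; comparing $i$-th coordinates gives $p_i\leq\mu_j+v^{(j)}_i$ for every $j$ with $v^{(j)}_i\neq\bm\infty$, hence $p_k-v^{(j)}_k\leq\mu_j$ for all $k$ with $v^{(j)}_k\neq\bm\infty$, and therefore $\lambda_j\leq\mu_j$; monotonicity of translation then gives $\lambda_j\odot v^{(j)}\leq\mu_j\odot v^{(j)}$ coordinatewise, whence $p\leq q\leq p$ and thus $q=p$, and the second observation produces for each $i$ a generator with $p\in\lex{Z}_i(v^{(j)})$. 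I expect the only mildly delicate point to be the bookkeeping with $\bm\infty$-coordinates — in particular noting that, because $p$ is finite, for each $i$ at least one generator with $\mu_j$ and $v^{(j)}_i$ both finite occurs in the representation of $p$ — but this is routine, and I anticipate no serious obstacle beyond it.
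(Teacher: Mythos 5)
Your argument is correct and is essentially the paper's proof: the paper simply states that the proof of \cite[Lemma 27]{JoswigLoho:2016} generalises directly, and what you have written out is exactly that generalisation, via the standard tropical-projection construction $\lambda_j=\max_k(p_k-v^{(j)}_k)$ and the observation that $q_i=p_i$ precisely when $p$ lies in some sector $\lex{Z}_i(v^{(j)})$, with the $\bm\infty$-bookkeeping handled as you indicate. The key point you correctly identify --- that only the ordered abelian group structure of $(\TT_2,\leq_{\mathrm{lex}})$ is used, so the rank one argument transfers verbatim --- is precisely why the paper can dispose of the lemma with a one-line citation.
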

\begin{proof}
  The proof of \cite[Proposition 5.37]{ETC} generalises directly.
\end{proof}

As in \cite[\S3.2]{JoswigLoho:2016} and \cite[\S6.3]{ETC}, Lemma \ref{lem:cone+cover} inspires the following combinatorial data.
Given a point $p \in (\TT_2)^d$, we define its \emph{covector} $S_p=S_p(V)$ to be the bipartite graph on the node set $[d] \sqcup [n]$ where $(i,j) \in S_p$ if and only if $p \in \lex{Z}_i(v^{(j)})$.
We say a covector is \emph{bounded} if no node in $[d]$ is isolated.
With this, we can restate Lemma \ref{lem:cone+cover} as $p \in \cK$ if and only if $S_p$ is bounded.
By definition, the points with a given covector $S$ satisfy the inequalities
\begin{equation} \label{eq:covector+inequalities}
  p_k - p_i \leq v_k^{(j)} - v_i^{(j)} \quad \text{for all } k \in \supp(v^{(j)}) \text{ where } (i,j) \in S \enspace .
\end{equation}
Note that these hold also for any point whose covector contains $S$.
We define the \emph{covector cell}
\[
  \lex{C}_S(V) \ = \ \SetOf{p \in (\TT_2)^d}{S \subseteq S_p} \enspace ,
\]
and immediately note that $\lex{C}_S=\lex{C}_S(V)$ is a lex-polyhedron, as it is cut out by lex-halfspaces defined by the family of inequalities \eqref{eq:covector+inequalities}.
As with support cells, there may be bipartite graphs $S,T$ such that $\lex{C}_S = \lex{C}_T$, but there is always a maximal bipartite graph defining the cell; this is the covector.

\begin{lemma}
  The covector cell $\lex{C}_S$ is rank two tropically convex.
\end{lemma}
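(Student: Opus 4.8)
The plan is to check the defining inclusion of rank two tropical convexity directly against the inequality description of $\lex{C}_S$. Recall that $p\in\lex{C}_S(V)$ precisely when $p_k-p_i\leq v^{(j)}_k-v^{(j)}_i$ holds, in the lexicographically ordered group $\RR^2$, for every edge $(i,j)\in S$ and every $k\in\supp(v^{(j)})$. The essential point is that $\TT_2=(\RR^2,\min,+)$ is, like $\TT$ itself, the min-plus semiring of a totally ordered abelian group, so all the inequality manipulations from the rank one argument (cf.\ \cite[\S5.2]{Tropical+Book} and \cite{JoswigLoho:2016}) carry over unchanged: one may add a group element to both sides of an inequality, and $\min(a,b)\leq a$.

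First I would dispose of the degenerate scalars. Given $p,q\in\lex{C}_S$ and $\lambda,\mu\in\TT_2\cup\{\bm\infty\}$ with $\lambda\oplus\mu=(0,0)$, put $r:=(\lambda\odot p)\oplus(\mu\odot q)$, so that $r_\ell=\min(\lambda+p_\ell,\mu+q_\ell)$ coordinatewise. If $\lambda=\bm\infty$, then $\lambda\oplus\mu=\mu$, forcing $\mu=(0,0)$ and $r=q\in\lex{C}_S$; likewise if $\mu=\bm\infty$. So one may assume $\lambda,\mu\in\TT_2$, whence $r\in(\TT_2)^d$.

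Then I would fix an edge $(i,j)\in S$ and an index $k\in\supp(v^{(j)})$ and show $r_k-r_i\leq v^{(j)}_k-v^{(j)}_i$. The coordinate $r_i$ equals $\lambda+p_i$ or $\mu+q_i$; by symmetry of the two summands assume $r_i=\lambda+p_i$. Since $r_k\leq\lambda+p_k$, we obtain
\[
  r_k-r_i\ \leq\ (\lambda+p_k)-(\lambda+p_i)\ =\ p_k-p_i\ \leq\ v^{(j)}_k-v^{(j)}_i\,,
\]
the last inequality holding because $(i,j)\in S\subseteq S_p$, i.e., $p\in\lex{Z}_i(v^{(j)})$. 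The case $r_i=\mu+q_i$ is identical with $q$ in place of $p$. As $(i,j)$ and $k$ were arbitrary, $S\subseteq S_r$, so $r\in\lex{C}_S$, which is what we want.

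I do not anticipate a genuine difficulty: the whole point is that the rank one proof uses only the ordered-abelian-group structure of the value group, which the lexicographic order on $\RR^2$ supplies. The only care needed is the bookkeeping with the absorbing element $\bm\infty$, which the reduction step above handles.
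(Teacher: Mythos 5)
Your proof is correct and takes essentially the same route as the paper: a direct verification that the defining inequalities $r_k-r_i\leq v^{(j)}_k-v^{(j)}_i$ survive tropical combinations, using only the ordered abelian group structure of $(\RR^2,\leq_{\mathrm{lex}})$. The only differences are cosmetic: the paper normalises to $\lambda=(0,0)$, $\mu\geq(0,0)$ and manipulates the $\min$ expressions in one display, whereas you treat the $\bm\infty$ scalars explicitly and split on which term attains the minimum in the $i$th coordinate --- a slightly more careful version of the same estimate.
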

\begin{proof}
  Let $p$ and $q$ be points in $\lex{C}_S(V)$.
  It suffices to show that for $\mu\in\TT_2$ with $\mu \geq(0,0)$ we have $p \oplus (\mu \odot q) \in \lex{C}_S(V)$.
  This follows from
  \[
    \begin{split}
      (p_k&\oplus (\mu\odot q_k)) - (p_i \oplus (\mu\odot q_i) \ = \ \min(p_k,\mu+q_k) - \min(p_i,\mu+q_i) \\
      &= \ \min(p_k-p_i,p_k-\mu-q_i,\mu+q_k-p_i,q_k-q_i) \\
      &\leq \ \min(p_k-p_i, q_k-q_i) \ \leq \ v_k^{(j)} - v_i^{(j)} \quad \text{for all } k \in \supp(v^{(j)}) \enspace .
    \end{split}
  \] \\[-2em] 
\end{proof}

This means that the covector cells $\lex{C}_S$ are both lex-polyhedra and rank two tropically convex; i.e., they form rank two analogues of the polytropes in \cite[\S6.5]{ETC}.
Covector cells $\lex{C}_S$ have some further nice combinatorial properties, analogous to support cells:
\begin{lemma} \label{lem:covector+cells}
  Let $S,T$ be bipartite graphs on $[d] \sqcup [n]$ such that no node of $[n]$ is isolated.
  \begin{enumerate}
  \item $\lex{C}_S \cap \lex{C}_T = \lex{C}_{S\cup T}$.
  \item $S \subseteq  T$ if and only if $\lex{C}_T$ is a face of $\lex{C}_S$.
  \end{enumerate}
\end{lemma}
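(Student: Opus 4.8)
The plan is to follow the proof of Lemma~\ref{lem:support+cells} almost verbatim, with covector cells in place of support cells and the inequalities~\eqref{eq:covector+inequalities} in place of~\eqref{eq:hypersurface+lex+cell}. The one extra ingredient I would isolate at the outset is the set-theoretic description $\lex{C}_S=\SetOf{p\in(\TT_2)^d}{S\subseteq S_p}$ coming straight from the definition of $\lex{C}_S$, together with the reformulation of sector membership: for a generator $v^{(j)}$ that is not identically $\bm\infty$ and a point $p$, one has $(i,j)\in S_p$ precisely when the index $i$ attains the minimum of $k\mapsto v_k^{(j)}-p_k$ over $k\in\supp(v^{(j)})$.

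Part~(1) is then immediate. A point $p$ lies in $\lex{C}_S\cap\lex{C}_T$ if and only if $S\subseteq S_p$ and $T\subseteq S_p$, which is equivalent to $S\cup T\subseteq S_p$, i.e.\ $p\in\lex{C}_{S\cup T}$. On the level of halfspaces this just records that $\lex{C}_S\cap\lex{C}_T$ is cut out by the union of the inequality families~\eqref{eq:covector+inequalities} indexed by $S$ and by $T$, which is exactly the family indexed by $S\cup T$; and the hypothesis that no node of $[n]$ is isolated is visibly inherited by $S\cup T$.

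For part~(2) the heart of the matter is an elementary computation about faces. If $(i,j)\in S$ and $k\in\supp(v^{(j)})$, then intersecting $\lex{C}_S$ with the boundary $\SetOf{p}{p_k-p_i=v_k^{(j)}-v_i^{(j)}}$ of the lex-halfspace $\lex{H}_{e_k-e_i,\,v_k^{(j)}-v_i^{(j)}}$ yields precisely $\lex{C}_{S\cup\{(k,j)\}}$: combining $p\in\lex{Z}_i(v^{(j)})$ with that equality forces $p_l-p_k\leq v_l^{(j)}-v_k^{(j)}$ for every $l\in\supp(v^{(j)})$, so $p\in\lex{Z}_k(v^{(j)})$, while the reverse implication is immediate from the two inequalities. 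Iterating, every face of $\lex{C}_S$ equals $\lex{C}_{S'}$ for some $S'\supseteq S$ obtained by adjoining finitely many pairs $(k,j)$ whose second entry already occurs in $S$; since by part~(1) two bipartite graphs cutting out the same cell are both contained in their union, which cuts out that same cell, the maximal graph $T$ defining this face satisfies $T\supseteq S'\supseteq S$, which is the ``only if'' direction. Conversely, given $S\subseteq T$, the assumption that no node of $[n]$ is isolated in $S$ guarantees that for each pair of $T\setminus S$ its second entry already occurs in $S$; adjoining these pairs to $S$ one at a time, the corresponding boundary hyperplanes are among those defining $\lex{C}_S$, and the successive intersections realise $\lex{C}_T$ as a face of $\lex{C}_S$.

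The step I expect to be the main obstacle is not a calculation but the bookkeeping around two subtleties already flagged in the text: the distinction between a covector cell and its maximal defining bipartite graph — which is what makes ``the'' face $\lex{C}_T$ unambiguous and lets one pass from $\lex{C}_T\subseteq\lex{C}_S$ back to $S\subseteq T$ — and the consistent handling of $\supp(v^{(j)})$, that is, of coordinates of generators equal to $\bm\infty$, throughout the sector inequalities. Both are routine but must be spelled out carefully for the argument in part~(2) to be watertight.
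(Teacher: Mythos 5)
Your argument is correct, but it takes a different route from the paper: the paper's proof of Lemma~\ref{lem:covector+cells} is a one-line citation, asserting that both claims are immediate generalisations of \cite[Corollaries~11 and~13]{DevelinSturmfels:2004} (which treat rank one tropical convexity in $\TT^d$, without $\bm\infty$ coordinates), whereas you give a self-contained proof modelled on Lemma~\ref{lem:support+cells}, using the halfspace description \eqref{eq:covector+inequalities} and the one-step computation that intersecting $\lex{C}_S$ with the boundary of $\lex{H}_{e_k-e_i,\,v^{(j)}_k-v^{(j)}_i}$ for $(i,j)\in S$, $k\in\supp(v^{(j)})$ yields $\lex{C}_{S\cup\{(k,j)\}}$. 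Your route buys transparency: it shows explicitly why the Develin--Sturmfels arguments survive the passage to the lexicographic setting over $\TT_2$ and the presence of $\bm\infty$ coordinates (via the restriction to $\supp(v^{(j)})$ and the hypothesis that no node of $[n]$ is isolated), which the paper leaves to the reader; the paper's route is of course shorter. The one caveat you rightly flag is shared by the paper's own formulation: the \enquote{only if} direction of part~(2) is only literally true when $S$ and $T$ are taken to be the maximal (covector) graphs of their cells, since distinct graphs can cut out the same lex-polyhedron; your resolution via part~(1) and maximality is exactly the intended reading, consistent with the paper's discussion preceding the lemma.
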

\begin{proof}
  Both claims are immediate generalisations of \cite[Observation 6.10]{ETC}.
\end{proof}
The second statement of Lemma \ref{lem:covector+cells} implies that given a covector cell $\lex{C}_S$, its relative interior, denoted $\inte(\lex{C}_S)$, is the set of points whose covector is precisely $S$.
We recall that as $\lex{C}_S$ is a lex-polyhedron, $\inte(\lex{C}_S)$ is open in the order topology but not in the Euclidean topology.
The following generalises the covector decomposition in rank one from \cite[\S6.3]{ETC} and \cite[\S5.2]{Tropical+Book}.

\begin{theorem}\label{thm:lex-complex}
  The intersection $\cK\cap(\TT_2)^d$ decomposes as a lex-polyhedral complex whose cells are of the form $\lex{C}_S$ where $S$ is a bounded covector with respect to the generating system~$V$.
\end{theorem}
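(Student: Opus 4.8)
The plan is to mimic the proof of Proposition~\ref{prop:lex+polyhedral+hypersurface}, with the covector cells $\lex{C}_S$ playing the role of the support cells $\lex{P}_S$ and with Lemmas~\ref{lem:cone+cover} and~\ref{lem:covector+cells} replacing Lemma~\ref{lem:support+cells}. Concretely, I would set
\[
  \lex{\Sigma} \ := \ \SetOf{\lex{C}_S(V)}{S \text{ a bounded covector with respect to } V} \enspace ,
\]
which is a finite collection, since there are only finitely many bipartite graphs on $[d]\sqcup[n]$, and each $\lex{C}_S$ is a lex-polyhedron by the inequalities \eqref{eq:covector+inequalities}. The two things to establish are that $\bigcup\lex{\Sigma} = \cK\cap(\TT_2)^d$ as sets, and that $\lex{\Sigma}$ is closed under passing to faces and to pairwise intersections.

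For the set-theoretic equality I would first record that every covector $S_p$ has no isolated node in $[n]$: for each generator $v^{(j)}$ (which we may assume is not identically $\bm\infty$), choosing an index $i\in\supp(v^{(j)})$ that maximises $p_i-v_i^{(j)}$ in the lexicographic order yields $p_k-p_i\leq v_k^{(j)}-v_i^{(j)}$ for all $k\in\supp(v^{(j)})$, i.e.\ $p\in\lex{Z}_i(v^{(j)})$ and hence $(i,j)\in S_p$; this is exactly the statement that the sectors $\lex{Z}_i(v^{(j)})$ tile $(\TT_2)^d$, cf.\ Remark~\ref{rem:max+sector}. Now if $p\in\cK\cap(\TT_2)^d$, then $S_p$ is bounded by Lemma~\ref{lem:cone+cover}, so $S_p$ is a bounded covector and $p\in\lex{C}_{S_p}\in\lex{\Sigma}$; conversely, any $p$ lying in some $\lex{C}_S\in\lex{\Sigma}$ satisfies $S\subseteq S_p$, so $S_p$ is bounded as well, whence $p\in\cK$ again by Lemma~\ref{lem:cone+cover}.

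Next I would verify the complex axioms. Since every member $S$ of $\lex{\Sigma}$ is a covector, it has no isolated node in $[n]$, so Lemma~\ref{lem:covector+cells} applies throughout. For intersections, $\lex{C}_S\cap\lex{C}_T=\lex{C}_{S\cup T}$ by Lemma~\ref{lem:covector+cells}(1); as $S\cup T\supseteq S$ it is bounded and has no isolated node in $[n]$, and replacing it by the covector of a point in the relative interior of $\lex{C}_{S\cup T}$ exhibits this cell as a member of $\lex{\Sigma}$. For faces, a face of $\lex{C}_S$ arises from \eqref{eq:covector+inequalities} by turning some inequalities $p_k-p_i\leq v_k^{(j)}-v_i^{(j)}$ into equalities; combined with $(i,j)\in S$, i.e.\ $p\in\lex{Z}_i(v^{(j)})$, such an equality forces $p\in\lex{Z}_k(v^{(j)})$, so by Lemma~\ref{lem:covector+cells}(2) the face is a covector cell $\lex{C}_T$ with $T\supseteq S$, again bounded and without isolated node in $[n]$, hence in $\lex{\Sigma}$. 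This shows $\lex{\Sigma}$ is a lex-polyhedral complex whose cells are precisely the $\lex{C}_S$ with $S$ a bounded covector.

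The point I would take most care over — although nothing deep is hidden — is keeping the two combinatorial conditions apart: being a \emph{bounded} covector refers to the absence of isolated nodes in $[d]$, whereas the hypothesis of Lemma~\ref{lem:covector+cells} is the absence of isolated nodes in $[n]$, so one must note separately that covectors, their unions, and the graphs produced by the face relation all automatically satisfy the latter. The only other mild nuisance is that $S\cup T$, and the graphs obtained by tightening inequalities, need not themselves be maximal, so one passes to the covector of the cell they define; all of the genuinely geometric input is already packaged in Lemmas~\ref{lem:cone+cover} and~\ref{lem:covector+cells}.
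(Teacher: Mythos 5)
Your proposal is correct and follows essentially the same route as the paper: the paper's proof also takes $\lex{\Sigma}=\SetOf{\lex{C}_S}{S \text{ bounded covector}}$, invokes Lemma~\ref{lem:cone+cover} for the covering and Lemma~\ref{lem:covector+cells} for closure under intersections and faces, exactly mirroring Proposition~\ref{prop:lex+polyhedral+hypersurface}. Your write-up merely spells out details the paper leaves implicit (no isolated nodes in $[n]$, passing to the maximal graph when $S\cup T$ is not itself a covector), which are fine.
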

\begin{proof}
  Lemma \ref{lem:cone+cover} shows that the collection of lex-polyhedra 
  \[
    \lex{\Sigma} \ = \ \SetOf{\lex{C}_S}{S \text{ bounded covector}}
  \] covers $\cK\cap(\TT_2)^d$.
  Lemma \ref{lem:covector+cells} shows that $\lex{\Sigma}$ is closed under intersections and taking faces, and therefore is a lex-polyhedral complex.
\end{proof}

\begin{remark} \label{rem:support+covector}
Recall from Remark \ref{rem:max+sector} that the rank two max-tropical hyperplane $\tropvariety_2(F_u)$ induces a decomposition of $\TT_2^d$ into a lex-polyhedral fan.
Furthermore, the maximal lex-cones are the sectors $\lex{Z}_i(u)$ equal to the support cell $\lex{P}_{e_i}$.
Given the generating set $V = \{v^{(1)},\dots,v^{(n)}\}$, the covector cell $\lex{C}_S$ is equal to the finite intersection
\[
\lex{C}_S \ = \ \bigcap_{(i,j) \in S} \lex{Z}_i(v^{(j)}) \enspace .
\]
Therefore the covector decomposition is precisely the common refinement of the lexicographical fan structures induced by the max-tropical hyperplanes $\tropvariety_2(F_{v^{(j)}})$.
Moreover, taking the product of the max-tropical linear forms gives the rank two max-tropical multilinear form $F_V = \bigodot F_{v^{(j)}}$.
The support sets of $F_V$ are precisely the covectors induced by $V$, implying covectors are a special case of support sets.
This generalises \cite[Corollary~6.16]{ETC}.
\end{remark}

For a rank two tropical cone $\cK$ generated by $V = \{v^{(1)}, \dots, v^{(n)}\}$ and a covector $T$, we let $\cK_T$ denote the rank two tropical cone generated by $V_T = \{v_T^{(1)},\dots,v_T^{(n)}\}$ where
\[
(v^{(j)}_T)_i \ = \
\begin{cases}
v^{(j)}_i & \text{if } (i,j) \in T \\
\bm\infty & \text{otherwise \enspace .}
\end{cases}
\]
Similar to support cells, we denote the covector cell of $\cK_T$ with covector $S$ as $B_{S,T}$.
The following results give decompositions for rank two tropical cones in terms of the interiors of polyhedra and ordinary polyhedra, analogous to Theorem \ref{thm:union+polyhedra} and Corollary \ref{cor:hypersurface+closure}.

\begin{theorem} \label{thm:cone+structure}
  Let $\cK$ be a rank two tropical cone generated by $V = \{v^{(1)},\dots,v^{(n)}\} \subset (\TT_2 \cup \{\bm\infty\})^d$.
  The intersection $\cK \cap (\TT_2)^d$ is the finite disjoint union
  \[
    \cK \cap \TT_2^d \ = \ \bigsqcup_S \bigsqcup_{T \supseteq S} \bigl(\inte(A_T) \times \inte(B_{S,T}) \bigr)
  \]
  of interiors of polyhedra in $\RR^{2\times d}$, where $A_T$ and $B_{S,T}$ are covector cells of the rank one tropical cones $\pi_{u*}(\cK)$ in $\RR_t^d$ and $\pi_{t*}(\cK_T)$ in $\RR_u^d$ respectively.
\end{theorem}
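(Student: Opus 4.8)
The plan is to follow the proof of Theorem~\ref{thm:union+polyhedra} almost line by line, with Theorem~\ref{thm:lex-complex} playing the role of Proposition~\ref{prop:lex+polyhedral+hypersurface} and covector cells $\lex{C}_S$ playing the role of support cells $\lex{P}_S$. By Theorem~\ref{thm:lex-complex}, $\cK\cap(\TT_2)^d$ is the union of the covector cells $\lex{C}_S$ over bounded covectors $S$ of $V$; passing to relative interiors makes this a disjoint union $\cK\cap(\TT_2)^d=\bigsqcup_S\inte(\lex{C}_S)$, since by Lemma~\ref{lem:covector+cells} the set $\inte(\lex{C}_S)$ consists of precisely those $p$ with covector $S_p=S$, and every point of $\cK\cap(\TT_2)^d$ has a unique bounded covector. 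Thus everything reduces to the pointwise identity $\inte(\lex{C}_S)=\bigsqcup_{T\supseteq S}\bigl(\inte(A_T)+\inte(B_S)\bigr)$, where $T$ ranges over the bounded covectors of $\pi_{u*}(\cK)$ containing $S$, and $A_T=\lex{C}_T(\pi_{u*}(V))$, $B_S=\lex{C}_S(\pi_{t*}(V_T))$ are the corresponding rank one covector cells.

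To prove this identity I would unwind the relation $(i,j)\in S_p$, which by definition of the sector $\lex{Z}_i(v^{(j)})$ reads $p_k-p_i\le v^{(j)}_k-v^{(j)}_i$ in the lexicographic order of $\TT_2$ for every $k\in\supp(v^{(j)})$. Writing $p_k=(p_{1k},p_{2k})$ and separating the two coordinates, each such lexicographic inequality is governed first by $p_{1k}-p_{1i}\le v^{(j)}_{1k}-v^{(j)}_{1i}$ in $\RR$ and, wherever that holds with equality, by $p_{2k}-p_{2i}\le v^{(j)}_{2k}-v^{(j)}_{2i}$. Set $T:=S_{\pi_{u*}(p)}$, the rank one covector of $\pi_{u*}(p)=(p_{11},\dots,p_{1d})$ with respect to $\pi_{u*}(V)$; it is bounded because $\pi_{u*}(p)\in\pi_{u*}(\cK)$, and $S_p\subseteq T$, so $T\supseteq S$. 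The condition ``$\pi_{u*}(p)$ has covector exactly $T$'' is precisely ``$\pi_{u*}(p)\in\inte(A_T)$''. The crucial observation is that, for any $(i,j)\in T$, one has
\[
  \SetOf{k\in\supp(v^{(j)})}{p_{1k}-p_{1i}=v^{(j)}_{1k}-v^{(j)}_{1i}} \ = \ \SetOf{k}{(k,j)\in T} \ = \ \supp(v^{(j)}_T)\,;
\]
the inclusion ``$\supseteq$'' comes from adding the sector inequalities witnessing $(i,j)\in T$ and $(k,j)\in T$, and ``$\subseteq$'' from $(i,j)\in T$ together with the equality. Hence $(i,j)\in S_p$ if and only if $(i,j)\in T$ and $p_{2k}-p_{2i}\le v^{(j)}_{2k}-v^{(j)}_{2i}$ for all $k\in\supp(v^{(j)}_T)$, i.e.\ if and only if $(i,j)$ lies in the rank one covector of $\pi_{t*}(p)$ with respect to $\pi_{t*}(V_T)$. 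Therefore $S_p=S$ if and only if that last covector equals $S$, i.e.\ $\pi_{t*}(p)\in\inte(B_S)$. Reading this chain of equivalences forwards gives $\inte(\lex{C}_S)\subseteq\bigsqcup_{T\supseteq S}(\inte(A_T)+\inte(B_S))$, and backwards — reconstructing $p$ from $q_1\in\inte(A_T)$ in the $\RR_t^d$-block and $q_2\in\inte(B_S)$ in the $\RR_u^d$-block — gives the reverse inclusion; disjointness over $T$ is uniqueness of rank one covectors.

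It then only remains to see that each summand is the interior of an ordinary polyhedron. As in Theorem~\ref{thm:union+polyhedra}, $A_T\subseteq\RR_t^d$ and $B_S\subseteq\RR_u^d$ sit in orthogonal coordinate subspaces, so the union of their defining linear (in)equalities cuts out $A_T+B_S$, and relaxing the non-strict inequalities to strict ones yields $\inte(A_T)+\inte(B_S)=\inte(A_T+B_S)$. I expect the main obstacle to be the bookkeeping in the displayed observation above — matching the ``tight'' first coordinates of the rank two sector inequalities with the support of the truncated generator $v^{(j)}_T$, while correctly handling the $\bm\infty$-entries; this is exactly the mechanism by which the lexicographic order forces the truncation $\cK\rightsquigarrow\cK_T$. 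An alternative, lighter route is to appeal to Remark~\ref{rem:support+covector}: the covector cells of $\cK$, of $\pi_{u*}(\cK)$ and of $\pi_{t*}(\cK_T)$ are the support cells of the rank two multilinear form $F_V$, of $\pi_{u*}(F_V)=F_{\pi_{u*}(V)}$ and of $\pi_{t*}((F_V)_T)=F_{\pi_{t*}(V_T)}$ respectively, after which Theorem~\ref{thm:union+polyhedra} applies essentially verbatim.
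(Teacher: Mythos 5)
Your proposal is correct and follows essentially the same route as the paper's proof: decompose $\cK\cap(\TT_2)^d$ via Theorem~\ref{thm:lex-complex} into relative interiors of covector cells, split each lexicographic sector inequality into its first- and second-coordinate conditions to identify $\inte(\lex{C}_S)$ with $\bigsqcup_{T\supseteq S}\bigl(\inte(A_T)+\inte(B_S)\bigr)$, and finish with the orthogonal-subspaces argument borrowed from Theorem~\ref{thm:union+polyhedra}. Your explicit verification that the tight first-coordinate indices for $(i,j)\in T$ are exactly $\supp(v^{(j)}_T)$ is the same mechanism the paper uses (conditions \eqref{eq:tcone} and \eqref{eq:ucone}), just spelled out in more detail.
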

\begin{proof}
By Theorem \ref{thm:lex-complex}, $\cK \cap \TT_2^d$ is the union of lex-polyhedral cells $\lex{C}_S$ as $S$ runs over all covectors.
Furthermore, the second statement of Lemma \ref{lem:covector+cells} implies this union becomes disjoint if we restrict to the interiors of $\lex{C}_S$.
Note that each $\inte(\lex{C}_S)$ is a lex-open polyhedron.
We claim that $\inte(\lex{C}_S) = \bigsqcup_{T \supseteq S} \bigl(\inte(A_T) \times \inte(B_{S,T}) \bigr)$.

The point $p$ is contained in $\inte(\lex{C}_S)$ if and only if for each $v^{(j)}$:
\begin{equation} \label{eq:S+tight+cone}
p_k - v_k^{(j)} \leq p_i -v_i^{(j)} \quad \text{for all } k \in \supp(v^{(j)}) \text{ where } (i,j) \in S \enspace .
\end{equation}
with equality if and only if $(k,j) \in S$.
Considering the lexicographical ordering on $\TT_2$ and its coordinates separately, this is equivalent to the following two conditions:
\begin{align} \label{eq:tcone}
\pi_{u*}(p_k) - \pi_{u*}(v_k^{(j)}) \leq \pi_{u*}(p_i) - \pi_{u*}(v_i^{(j)}) \enspace ,
\end{align}
for all $k \in \supp(v^{(j)})$ and $(i,j) \in T$ for some $T \supseteq S$, with equality if and only if $(k,j) \in T$.
\begin{align} \label{eq:ucone}
\pi_{t*}(p_k) - \pi_{t*}((v_T^{(j)})_k) \leq \pi_{t*}(p_i) - \pi_{u*}((v_T^{(j)})_i) \enspace ,
\end{align}
for all $k \in \supp(v_T^{(j)})$ and $(i,j) \in S$, with equality if and only if $(k,j) \in S$.
Condition \eqref{eq:tcone} is equivalent to $\pi_{u*}(p)$ being contained in the relative interior of the covector cell $A_T$ of $\pi_{u*}(\cK)$.
Condition \eqref{eq:ucone} is equivalent to $\pi_{t*}(p)$ being contained in the relative interior of the covector cell $B_{S,T}$ of $\pi_{t*}(\cK_T)$.

It remains to show each part of the disjoint union is the interior of a polyhedron.
The proof is identical to the end of the proof of Theorem \ref{thm:union+polyhedra}.
\end{proof}

\begin{corollary} \label{cor:cone+closure}
  With the notation of Theorem~\ref{thm:cone+structure}: the closure of $\cK \cap \TT_2^d$ in the Euclidean topology is the finite union
  \[
    \overline{\cK \cap \TT_2^d} \ = \ \bigcup_S \bigcup_{T \supseteq S} \bigl(A_T \times B_{S,T} \bigr)
  \]
  of polyhedra in $\RR^{2\times d}$.
\end{corollary}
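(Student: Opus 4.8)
The plan is to mimic the proof of Corollary~\ref{cor:hypersurface+closure} essentially verbatim, with support cells replaced by covector cells. First I would invoke Theorem~\ref{thm:cone+structure}, which already expresses $\cK \cap \TT_2^d$ as the \emph{finite} disjoint union $\bigsqcup_S \bigsqcup_{T \supseteq S} \bigl(\inte(A_T) + \inte(B_S)\bigr)$, where each $A_T$ is a covector cell of $\pi_{u*}(\cK)$ in $\RR_t^d$ and each $B_S$ is a covector cell of $\pi_{t*}(\cK_T)$ in $\RR_u^d$, and where $\RR^{2d}=\RR_t^d\oplus\RR_u^d$ as set up in Section~\ref{sec:hypersurfaces}. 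Since the collection is finite and the Euclidean closure of a finite union is the union of the Euclidean closures of the parts, it suffices to show $\overline{\inte(A_T)+\inte(B_S)} = A_T + B_S$ for each pair $S \subseteq T$.

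For that identity I would use two elementary facts. The first is that the relative interior of a nonempty polyhedron is dense in it for the Euclidean topology: by Definition~\ref{def:relative+interior}, $\inte(A_T)$ is cut out by the defining equalities of $A_T$ together with the \emph{strict} versions of its proper inequalities, so $\overline{\inte(A_T)} = A_T$ (and likewise for $B_S$); here one uses that $A_T$ and $B_S$ are nonempty, which is built into Theorem~\ref{thm:cone+structure}. The second is that $A_T$ and $B_S$ lie in the complementary coordinate subspaces $\RR_t^d$ and $\RR_u^d$, so $\inte(A_T)+\inte(B_S) = \inte(A_T)\times\inte(B_S)$ inside $\RR_t^d\times\RR_u^d$, and hence its Euclidean closure is $\overline{\inte(A_T)}\times\overline{\inte(B_S)} = A_T \times B_S = A_T + B_S$. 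Combining this with the previous paragraph yields $\overline{\cK \cap \TT_2^d} = \bigcup_S \bigcup_{T \supseteq S} (A_T + B_S)$, as claimed.

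I do not anticipate any genuine obstacle here; the corollary is a purely topological consequence of Theorem~\ref{thm:cone+structure}, and the argument is the same bookkeeping that appears in Corollary~\ref{cor:hypersurface+closure} and Remark~\ref{rem:hypersurface+characterisation}. The only points meriting a line of care are that the decomposition from Theorem~\ref{thm:cone+structure} is finite with nonempty pieces (so that closure distributes over the union and relative interiors are dense) and that the Minkowski sum appearing in the statement is genuinely the orthogonal direct sum $\RR_t^d\oplus\RR_u^d$, both of which have already been fixed in the ambient conventions. One may optionally remark, as in the analogous discussion for hypersurfaces, that the resulting union need be neither a polyhedral complex nor pure.
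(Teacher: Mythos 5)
Your proposal is correct and follows essentially the same route as the paper: invoke Theorem~\ref{thm:cone+structure}, use that the Euclidean closure of a finite union is the union of the closures, and note that $\overline{\inte(A_T)+\inte(B_S)}=A_T+B_S$. The extra detail you supply (density of relative interiors and the orthogonal splitting $\RR^{2d}=\RR_t^d\oplus\RR_u^d$) is exactly what the paper leaves implicit, so nothing further is needed.
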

\begin{proof}
  As $A_T \times B_{S,T} = \overline{\inte(A_T) \times \inte(B_{S,T})}$, the result follows from Theorem \ref{thm:cone+structure} and that the closure of a finite union of sets equals the union of their closures.
\end{proof}

Recall that Diagram \eqref{eq:convex+diagram} says $\pi_{u*}$ and $\pi_{u\mapsto\sigma}$ (and $\pi_{t*}$ and $\pi_{t\mapsto\rho}$) commute with the valuation map.
Therefore if $\cK = \val_2(K)$ for some ordinary cone $K \subset \hahnconvergent{\RR}{t,u}^d$, we can obtain an analogous result to Theorem \ref{thm:cone+structure} in terms of the covector decompositions of $\val(\pi_{u\mapsto\sigma}(K))$ and $\val(\pi_{t\mapsto\rho}(K))$.

As with Corollary \ref{cor:polytope}, we can obtain an analogous statement to Theorem~\ref{thm:cone+structure} and Corollary~\ref{cor:cone+closure} for tropical polytopes by dehomogenisation.
Explicitly, given some generating set $V \subset (\TT_2 \cup \{\bm\infty\})^d$ for a convex polytope $\cK$, we can consider the cone $\cK' \subset (\TT_2 \cup \{\bm\infty\})^{d+1}$ generated by
\[ \SetOf{((0,0),v^{(j)})}{v^{(j)} \in V} \enspace. \]
Then $\cK$ inherits the structure of $\cK'$ intersected with the hyperplane $\{x_0 = (0,0)\}$.
Note that Diagram \eqref{eq:convex+diagram} implies we can do this dehomogenisation in $\hahnconvergent{\RR}{t,u}_{\geq 0}^{d+1}$.

\section{Concluding remarks and open questions}
\label{sec:remarks}
\noindent
To avoid cumbersome notation in this article, we decided to restrict our exposition to rank two tropical objects.
Yet the characterisations of rank two tropical hypersurfaces and cones can be generalised to arbitrary finite rank by recursively exploiting the structure of tropical hypersurfaces and cones of corank one.
This entails a generalisation of Theorem~\ref{thm:stable+intersection} to the simultaneous stable intersection of any finite number of tropical hypersurfaces.
We leave the details to the reader.

A rank one tropical hypersurface, given by a tropical polynomial $F$, is dual to the regular subdivision of the point configuration given by the monomials of $F$, where the coefficients yield the height function; cf.\ \cite[Proposition 3.1.6]{Tropical+Book}.
\begin{question}\label{qst:regular+refinement}
  How does this generalise to higher rank?
\end{question}
This should be related to the \emph{regular refinement} of subdivisions in the sense of \cite[Definition 2.3.17]{Triangulations}.
Furthermore, our current setup for rank two tropical hypersurfaces is purely polyhedral, and so this does not capture any arithmetic properties.
\begin{question}
  What is a good notion of multiplicity for tropical hypersurfaces of higher rank?
\end{question}
In this context it could be interesting to investigate the recent work of Gwo\'{z}dziewicz, Hejmej and Schober on the factorisation of formal power series of higher rank \cite{GwozdziewiczHejmej:1807.04944}.

Proposition \ref{prop:lex+polyhedral+hypersurface} and Theorem \ref{thm:lex-complex} describe rank two objects as a lex-polyhedral complex, and moreover gives a canonical inequality description for each.
However, we only know little about lex-polyhedra.
\begin{question}\label{qst:representation}
  What can be said about the combinatorial properties of lex-polyhedra?
\end{question}
The theory of non-trivial divisible totally ordered abelian groups known to be complete; e.g., see \cite[\S2]{Hils:2018}.
As a consequence lex-polyhedra, which have integral slopes by definition, should share some properties of rational ordinary polyhedra.

\bibliographystyle{plain}
\bibliography{stable}

\end{document}